\title[Dense and Danzer]{Dense forests and Danzer sets}
\date{}
\author{Yaar Solomon}
\address{Stony Brook University, Stony Brook, NY {\tt yaar.solomon@stonybrook.edu}} 
\author{Barak Weiss}
\address{School of Mathematical Sciences, Tel Aviv University, Israel 
{\tt barakw@post.tau.ac.il}}
\newif\ifdraft\drafttrue
\newcommand{\N}{{\mathbb{N}}}
\newcommand{\Z}{{\mathbb{Z}}}
\newcommand{\Q}{{\mathbb {Q}}}
\newcommand{\R}{{\mathbb{R}}}
\newcommand{\C}{{\mathbb{C}}}
\newcommand{\E}{{\mathbb{E}}}
\newcommand{\RA}{\mathscr{R}}
\newcommand{\conv}{{\rm conv}}
\newcommand{\diam}{{\rm diam}}
\newcommand{\spa}{{\rm span}}
\newcommand{\supp}{{\rm supp}}
\newcommand{\Ad}{{\operatorname{Ad}}}
\newcommand{\ASL}{\operatorname{ASL}}
\newcommand{\SL}{\operatorname{SL}}
\newcommand{\df}{{\, \stackrel{\mathrm{def}}{=}\, }}
\newcommand{\sm}{\smallsetminus}
\newcommand{\til}{\widetilde}
\newcommand{\vre}{\varepsilon}
\newcommand\Vol{\mathrm{vol}}
\newcommand {\ignore}[1]  {}
\newcommand\eq[2]{{\ifdraft{\ \tt [#1]}\else\ignorespaces\fi}\begin{equation}\label{#1}{#2}\end{equation}}
\newcommand {\equ}[1]{\eqref{#1}}
\newcommand{\absolute}[1] {\left|{#1}\right|}
\newcommand{\norm}[1]{\left\|{#1}\right\|}
\theoremstyle{plain}
\newtheorem{thm}{Theorem}[section]
\newtheorem{lem}[thm]{Lemma}
\newtheorem{prop}[thm]{Proposition}
\newtheorem{cor}[thm]{Corollary}
\theoremstyle{definition}
\newtheorem{definition}[thm]{Definition}
\newtheorem{question}[thm]{Question}
\newtheorem{example}[thm]{Example}
\newcommand{\CC}{\mathbb{C}}
\newcommand{\RR}{\mathbb{R}}
\newcommand{\QQ}{\mathbb{Q}}
\newcommand{\ZZ}{\mathbb{Z}}
\newcommand{\GC}{G^{\CC}}
\newcommand{\Liew}[1]{\lowercase{\mathfrak{#1}}}
\DeclareMathOperator{\SU}{SU}
\DeclareMathOperator{\Sp}{Sp}
\numberwithin{equation}{section}
\begin{document}

\begin{abstract}
A set $Y\subseteq\R^d$ that intersects every convex set of volume $1$
is called a Danzer set. It is not known whether there are Danzer sets
in $\R^d$ with growth rate $O(T^d)$. We prove that natural 
candidates, such as discrete sets that arise from substitutions and from
cut-and-project constructions, are not Danzer sets. For cut and
project sets our proof relies on the dynamics of 
homogeneous
flows. 
We
consider a weakening of the Danzer problem, the 
existence of a uniformly discrete dense forests, and we use
homogeneous dynamics (in particular Ratner's theorems on unipotent flows) to construct such sets. We also
prove an equivalence between the above problem and a well-known
combinatorial problem, and deduce the existence of  Danzer sets with
growth rate $O(T^d\log T)$, improving the previous bound of $O(T^d\log^{d-1}
T)$. 
\end{abstract}

\maketitle

\section{Introduction}\label{sec:intro}
This paper stems from a famous unsolved problem formulated by Danzer
in the 1960's (see e.g. \cite{GL87, croft, Gowers}). We will call a subset $Y
\subseteq \R^d$ a {\em Danzer set} if it intersects every convex
subset of volume 1. We will say that $Y$ {\em has growth $g(T)$}, where
$g(T)$ is some function, if 
\eq{eq: growth requirement}{
\# (Y \cap B(0,T)) = O(g(T))
} (as usual $f(x)=O(g(x))$
means $\limsup_{x \to \infty} \frac{f(x)}{g(x)} < \infty$ and
$B(0,T)$ is the Euclidean ball of radius $T$ centered at the origin in
$\R^d$). Danzer asked whether for $d \geq 2$ there is a Danzer set with growth
$T^d$. In this paper we present several results related to  
this question. 

The only prior results on Danzer's question of which we are aware are due to Bambah and
Woods. Their paper \cite{BW71} contains two results. The first is a
construction of a Danzer set in $\R^d$ with growth rate
$T^d\log^{d-1}(T)$, and the second is a proof that any finite union
of grids\footnote{A \emph{grid} is a translated lattice.} is not a
Danzer set. Our paper contains parallel results. 
 
We prove the following theorems. For detailed definitions of the terms
appearing in the statements, we refer the reader to the section in
which 
the result
is proved.  
\begin{thm}\label{thm:Substitution_Not_Danzer}
Let $H$ be a primitive substitution system on the polygonal basic tiles
$\{T_1,\ldots,T_n\}$ in $\R^d$. Any Delone set, which is obtained
from a tiling $\tau\in X_H$ by picking a point in the same location in
each of the basic tiles, 
 is not a Danzer set. Also the set of vertices of tiles in such a
 tiling is not a Danzer set.  
\end{thm}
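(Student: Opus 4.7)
The plan is to produce, for every $V>0$, a convex set of volume exceeding $V$ that is disjoint from $Y$. These convex sets will be thin slabs supported on long flat pieces of the $(d-1)$-skeleton $\Gamma$ of $\tau$: primitivity of the substitution yields such pieces of arbitrarily large diameter, and $Y$ is essentially absent from small neighborhoods of them.

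To find the flat pieces, I would use primitivity of $\sigma$ (with inflation factor $\lambda>1$): for every prototile $T_i$ and every $k\in\N$, the iterated super-tile $\sigma^k(T_i)$---a scaled copy of $T_i$ by $\lambda^k$, tiled by copies of the $T_j$---appears as a subpatch of every $\tau\in X_H$. Since the prototiles are polytopal and $\sigma$ is a stone inflation, the boundary $\partial\sigma^k(T_i)=\lambda^k\partial T_i$ decomposes as a union of $(d-1)$-faces of the small tiles contained in $\sigma^k(T_i)$. Consequently $\tau$ contains a convex flat face $F\subset\Gamma$ of diameter $L=\Theta(\lambda^k)$, lying in a hyperplane $H$.

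Given such a face $F$, I build the desired convex set in two ways. For the Delone set $Y$ obtained by choosing a point $p_i$ in each prototile, let $\epsilon:=\min_i\dist(p_i,\partial T_i)>0$; every point of $Y$ lies at distance $\ge\epsilon$ from $\Gamma$, so the Minkowski sum $F+B(0,\epsilon)$ is a convex set of volume of order $\epsilon L^{d-1}$, contained in the $\epsilon$-neighborhood of $\Gamma$, and hence disjoint from $Y$. For the vertex set $Y$, fix a sufficiently small $\delta>0$ (depending only on prototile geometry), shrink $F$ slightly inward to an $F'\subset F$ of comparable diameter, and consider the open prism $F'\times(0,\delta)$ on one side of $H$; any tile $T'$ meeting this prism has part of $F'$ on its boundary, so its vertices lie either on $H$ or at perpendicular distance $\ge\delta$ from $H$, and hence are excluded from the open prism. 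Either convex set has volume tending to infinity with $L$, contradicting the Danzer property of $Y$.

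The principal difficulty is the structural input needed to realize long flat faces in $\Gamma$: that boundaries of iterated super-tiles are genuine unions of $(d-1)$-faces of small tiles. For stone inflations on polytopal prototiles this follows by induction from the definition of $\sigma$; if the substitution is allowed to have boundary ``wiggles,'' one must instead combine primitivity with a pigeonhole argument on face directions to locate nearly-flat boundary patches of arbitrarily large diameter, which still suffice to accommodate the thin slabs above.
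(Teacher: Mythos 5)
Your proposal follows essentially the same route as the paper: exploit the substitution hierarchy (via supertile boundaries / the tilings $\tau_m$ with $H^m(\tau_m)=\tau_0$) to locate flat boundary pieces of $\partial\tau_0$ of arbitrarily large diameter, and then thicken such a piece to obtain a convex set of volume $>1$ missing $Y$. The paper thickens a \emph{segment} $L$ inside a supertile face rather than the entire $(d-1)$-face $F$, but this is a cosmetic difference; both yield convex sets of unbounded volume.

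The genuine gap in the proposal is the assumption $\epsilon:=\min_i\dist(p_i,\partial T_i)>0$. The theorem is stated for \emph{any} choice function, and in particular allows $p_i\in\partial T_i$ for some $i$, in which case $\epsilon=0$ and the Minkowski sum $F+B(0,\epsilon)$ degenerates. The paper handles this explicitly as a separate case: it chooses $\delta_1>0$ small enough that (a) for interior choice points, $2\delta_1<D(h(T_i),\partial T_i)$, and (b) for boundary choice points, $2\delta_1$ is smaller than the distance from $h(T_i)$ to the faces of $T_i$ \emph{not} containing $h(T_i)$; then it replaces $U_\delta(L)$ by the shifted neighborhood $U_{\delta_1}(L+v)$ with $v\perp H$, $\|v\|=\delta_1$, which is pushed just off the hyperplane so that it misses both the interior choice points and the boundary-face choice points. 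Your prism construction $F'\times(0,\delta)$ for the vertex set is precisely the same kind of device (pushing the slab off $H$), so you already have the tool in hand; you just need to invoke it (or the shifted-slab variant) for the general choice function as well, with the $\delta$ chosen according to the clause (b) above. Without that, the argument does not cover the full statement of the theorem.

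A smaller point: your claim that ``any tile $T'$ meeting this prism has part of $F'$ on its boundary'' deserves a short justification for the choice of $\delta$ — you need $\delta$ smaller than a quantity controlled purely by the prototile geometry (e.g.\ the minimum distance from any boundary face of any $T_i$ to a vertex of $T_i$ not on that face, which is the constant the paper uses). As written, the proposal waves at ``sufficiently small $\delta$ depending only on prototile geometry'' without identifying what that quantity is, but the fix is the same one the paper makes.
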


In particular the vertex set of a Penrose
tiling is not a Danzer set. The vertex set of a
Penrose tiling has another description, namely as a cut-and-project
set. We now consider such sets.  

\begin{thm}\label{thm:Ratner,cut-and-project}
Let $\Lambda$ be a finite union of cut-and-project sets. Then $\Lambda$ is not a Danzer set. 
\end{thm}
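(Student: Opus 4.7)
My plan is to produce, for every $L \geq 1$, a convex body $B \subset \R^d$ of volume $1$ and diameter comparable to $L$ that is disjoint from $\Lambda$; arbitrarily large such $B$'s force $\Lambda$ not to be a Danzer set. A finite union of cut-and-project sets in $\R^d$ can be realized as a subset of a single cut-and-project set by taking the total internal space to be the direct sum of the original internal spaces, the lattice to be a ``diagonal'' combination of the original lattices (with physical factors identified), and the window to be a suitable combined window. I may therefore assume throughout that $\Lambda = \Lambda(\Gamma, W)$ with $\Gamma \subset \R^{d+k}$ a lattice and $W \subset \R^k$ a bounded window.

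\smallskip

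\emph{Dynamical reformulation.} For $L \geq 1$, let $B_L \subset \R^d$ be the axis-aligned box of volume $1$ with one side of length $L$ and $d-1$ sides of length $L^{-1/(d-1)}$, and set
\[
g_L \;=\; \mathrm{diag}\bigl(L^{-1},\, L^{1/(d-1)},\, \ldots,\, L^{1/(d-1)},\, 1,\, \ldots,\, 1\bigr) \in \SL_{d+k}(\R),
\]
acting on the first $d$ coordinates as a linear map of $\R^d$ and trivially on the internal factor $\R^k$. Then $g_L(B_L) = [0,1]^d$, and because $g_L$ fixes $\R^k$ it intertwines the cut-and-project operation: $g_L \Lambda(\Gamma, W) = \Lambda(g_L \Gamma, W)$. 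It therefore suffices to find arbitrarily large $L$ for which some translate of the unit cube in $\R^d$ is disjoint from $\Lambda(g_L \Gamma, W)$, equivalently for which $g_L \Gamma$ misses some slab $([0,1]^d + v) \times W \subset \R^{d+k}$.

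\smallskip

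\emph{Applying Ratner.} To produce such $L$, I would study the orbit $\{g_L \Gamma\}_L$ in the space of unimodular lattices $X = \SL_{d+k}(\R)/\SL_{d+k}(\Z)$. Let $U$ be the stable horospherical subgroup of $\{g_L\}$, a unipotent subgroup of $\SL_{d+k}(\R)$. By Ratner's orbit-closure theorem, $\overline{U \Gamma}$ is a closed homogeneous subset of $X$, and combined with equidistribution of $g_L$-orbits along this closure one finds arbitrarily large $L$ for which $g_L \Gamma$ lies close in $X$ to a specific lattice $\Gamma^{\ast}$ whose cut-and-project set $\Lambda(\Gamma^\ast, W)$ manifestly admits an empty unit cube in $\R^d$ (for instance a $\Gamma^\ast$ whose intersection with the slab $\R^d \times W$ lies near a proper subspace of $\R^{d+k}$, so that its image in $\R^d$ is confined to a sparse union of affine subspaces). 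A continuity / small-perturbation argument for $\Lambda(\cdot, W)$ in the lattice variable (away from boundary points of $W$) then transfers the empty cube to $\Lambda(g_L\Gamma, W)$, and pulling back by $g_L^{-1}$ yields a translate of $B_L$ disjoint from $\Lambda$.

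\smallskip

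\emph{Main obstacle.} The hardest step is the application of Ratner: one must identify the orbit closure $\overline{U\Gamma}$ precisely enough to ensure that it visits the locus of lattices whose cut-and-project set admits a unit-cube gap. This requires Ratner's classification of orbit closures together with a separate treatment of the degenerate cases (for instance when $\pi_{\R^d}(\Gamma)$ is already a lattice in $\R^d$ rather than dense, or when $\Gamma$ itself contains a nonzero vector in $\R^d \times \{0\}$), together with care to handle the discontinuity of $\Lambda(\cdot, W)$ across $\partial W$ so that the perturbation argument is valid.
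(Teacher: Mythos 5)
Your proposal breaks down at two points, one a genuine error and one a missing argument that is actually the crux of the theorem.

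\textbf{The reduction to a single cut-and-project set is false.} You claim that a finite union of cut-and-project sets can be realized as a subset of a single cut-and-project set by taking a ``diagonal'' lattice with physical factors identified and summed internal spaces. That construction produces the \emph{intersection}, not the union: a point of $\R^d$ lifts to a lattice point in the combined space only if it lifts in every factor simultaneously. When the underlying lattices are incommensurable there is in general no lattice $L$ in any $\R^{d+k}$ with $\Lambda(L_1,W_1)\cup\Lambda(L_2,W_2)\subseteq\Lambda(L,W)$. The paper cannot make this reduction either; instead it keeps the union and works in a product $\prod_i\ASL_{n_i}(\R)/\ASL_{n_i}(\Z)$ with the diagonal $\ASL_2(\R)$-action (Propositions~\ref{prop:general_fact}, \ref{prop:similar_for_subspaces} and Corollary~\ref{cor:from_witte2}). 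You would need to replace your first step with something of this kind, and then carry the product structure through the rest of the argument.

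\textbf{The Ratner step is a plan, not a proof, and it skips the hard part.} You want to find, in the orbit closure of a unipotent subgroup $U$ through $\Gamma$, a lattice $\Gamma^*$ whose cut-and-project set has a unit-cube gap, and then transfer by equidistribution. But nothing you wrote shows that $\overline{U\Gamma}$ contains (or accumulates on) any such $\Gamma^*$. Ratner's theorem only tells you that $\overline{U\Gamma}$ is a homogeneous set $H\Gamma$ for some intermediate group $H$; the theorem you need is that this orbit closure cannot be compact and is translation-invariant in a strong sense, so that by Minkowski's successive minima you can find grids whose points sit on widely separated affine hyperplanes. In the paper this non-compactness is precisely what required genuine work. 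An earlier version of the paper indeed used Ratner together with a result of Dave Morris classifying semisimple $\Q$-subgroups of $\SL_n$ containing an upper-left $\SL_2$; the published version replaces all of that with Str\"ombergsson's elementary argument (Proposition~\ref{prop: strombergsson}), which uses only Mahler's compactness criterion and the transitivity of $\SL_2(\R)$ on $\R^2\setminus\{0\}$ to produce arbitrarily short lattice vectors in the $\SL_2(\R)$-orbit. Note also that your one-parameter diagonal family $g_L$ is not enough: the paper uses the full group $\ASL_2(\R)$ (after reducing to $d=2$) precisely because the $\SL_2$-transitivity on $\R^2\setminus\{0\}$ is what collapses the physical component of a short vector, and the translation part is needed (together with $\Q$-irreducibility, Corollary~\ref{cor:all_translations}) to turn a short vector into an empty region at the origin.

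In short: your high-level idea (escape to infinity in a space of lattices implies an empty convex body) is the right one and matches the paper's, but the union-to-single reduction is incorrect, and the key non-compactness of the orbit closure is asserted rather than proved. Either of these, uncorrected, leaves the argument incomplete.
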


As for positive results, one may try to construct sets which either
satisfy a weakening of the Danzer condition, or a weaker growth
condition. The following results are in this vein. 
A set $Y\subseteq\R^d$ is called a \emph{dense forest} if
there is a function
$\varepsilon(T)\xrightarrow{T\to\infty}0$ such that for
any $x\in\R^d$ and any direction $v\in\mathbb{S}^{d-1} \df \{v \in
\R^d: \|v\| = 1\}$, the distance from $Y$ to the line segment of
length $T$ going from $x$ in direction $v$ is at most
$\varepsilon(T)$. It is not hard to show that a Danzer set is a dense
forest. 

\begin{thm}\label{thm:dense_forest}
Let $U \cong \R^d$ and suppose $X$ is a compact metric space on which
$U$ acts continuously and completely uniquely ergodically. Then for
any cross-section $\mathcal{S}$ and any $x_0 \in X$, the set of `visit
times'  
\[\mathcal{D} \df \{u \in U: u.x_0 \in \mathcal{S}\}\]
is a uniformly discrete set which is a dense forest. In particular,
uniformly discrete dense forests exist in $\R^d$ for any $d$. 
\end{thm}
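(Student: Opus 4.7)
The plan is to exploit the flow-box structure of the cross-section to control both the separation of $\mathcal{D}$ and its hitting properties along every direction. Any cross-section $\mathcal{S}$ comes with a flow-box radius $\delta_0 > 0$ such that the map $B(0,\delta_0) \times \mathcal{S} \to X$, $(u,s) \mapsto u.s$, is a homeomorphism onto the open set $W_{\delta_0} \df B(0,\delta_0) \cdot \mathcal{S}$. Uniform discreteness of $\mathcal{D}$ is then immediate: if $u_1, u_2 \in \mathcal{D}$ satisfy $0 < |u_1 - u_2| < \delta_0$, then writing $u_2.x_0 = (u_2 - u_1).(u_1.x_0)$ with $u_1.x_0,\,u_2.x_0 \in \mathcal{S}$ contradicts injectivity of the flow box. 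Let $\mu$ denote the unique $U$-invariant Borel probability measure on $X$.

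For the dense forest property, fix $\delta \in (0, \delta_0)$ and set $W_\delta \df B(0,\delta) \cdot \mathcal{S}$, an open set of positive $\mu$-measure. The crux is the following uniform return-time claim: there exists $T_0(\delta) < \infty$ such that for every $y \in X$ and every $v \in \mathbb{S}^{d-1}$, the orbit arc $\{tv.y : 0 \le t \le T_0(\delta)\}$ meets $W_\delta$. Assuming the claim, given $x \in \R^d$, $v \in \mathbb{S}^{d-1}$, and $T \ge T_0(\delta)$, apply it to $y = x.x_0$ to obtain $t \in [0,T_0(\delta)] \subseteq [0,T]$ and $w \in B(0,\delta)$ with $(x + tv - w).x_0 \in \mathcal{S}$. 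Then $u \df x + tv - w$ lies in $\mathcal{D}$ and within $\delta$ of the segment $\{x + t'v : 0 \le t' \le T\}$. Setting $\varepsilon(T) \df \inf\{\delta > 0 : T_0(\delta) \le T\}$ yields $\varepsilon(T) \to 0$ as $T \to \infty$, as required.

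It remains to prove the claim, which I would do by contradiction. Suppose it fails for some $\delta$: there exist $v_n \in \mathbb{S}^{d-1}$, $y_n \in X$, and $T_n \to \infty$ such that $\{tv_n.y_n : 0 \le t \le T_n\}$ avoids $W_\delta$. Extract subsequences so that $v_n \to v^*$, $y_n \to y^*$, and $\nu_n \df \frac{1}{T_n} \int_0^{T_n} \delta_{tv_n.y_n}\, dt \to \nu$ weakly. A routine computation — comparing $\nu_n$ with its translate by $sv_n$, which differs in total variation by $O(|s|/T_n)$, and invoking continuity of the action in both arguments — shows $\nu$ is invariant under the one-parameter flow $\{tv^*\}_{t \in \R}$. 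Complete unique ergodicity applied to this subgroup then forces $\nu = \mu$. But Portmanteau gives $\mu(W_\delta) = \nu(W_\delta) \le \liminf_n \nu_n(W_\delta) = 0$, contradicting $\mu(W_\delta) > 0$. The principal obstacle is precisely this uniformity of $T_0(\delta)$ in both the basepoint $y$ and the direction $v$: mere unique ergodicity of the ambient $\R^d$-action would not suffice, since the limiting measure $\nu$ is only guaranteed invariant under a one-parameter subgroup, and it is complete unique ergodicity that is needed to identify $\nu$ with $\mu$.
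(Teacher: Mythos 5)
Your proof is correct and follows essentially the same strategy as the paper's: uniform discreteness from flow-box injectivity, and the dense forest property by showing that orbit-arc averages converge weakly to $\mu$ (using that any weak limit is invariant under a one-parameter subgroup, then invoking complete unique ergodicity), contradicting the fact that the arcs avoid the open set $W_\delta$. The only cosmetic differences are that you average over one-dimensional line segments while the paper averages over thin $d$-dimensional cylinders $\mathcal{C}'_n$, and you package the argument into a clean uniform return-time claim $T_0(\delta)$ before deducing the dense-forest statement, whereas the paper negates the dense-forest property directly and closes with a test-function argument in place of your appeal to Portmanteau; these are interchangeable.
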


By {\em completely uniquely ergodically} we mean that the restriction
of the action to any one-parameter subgroup of $U$ is uniquely
ergodic. Our construction of completely uniquely ergodic actions 
relies on Ratner's theorem
and results on the structure of lattices in algebraic groups. 

In order to construct Danzer sets which grow slightly faster than
$O(T^d)$, we first establish an equivalence between this question and
a related finitary question, namely the
`Danzer-Rogers question' (see Question \ref{ques:CG_Danzer}). 

\begin{thm}\label{thm:Equivalent_Questions}
For a fixed $d\ge 2$, and a function $g(x)$ of polynomial growth, the following are equivalent:
\begin{itemize}
\item[(i)]
There exists a Danzer set $Y\subseteq\R^d$ of growth $g(T)$.
\item[(ii)] 
For every $\varepsilon>0$ there
exists  $N_\varepsilon\subseteq[0,1]^d$, such that $\# N_{\varepsilon}
= O(g(\varepsilon^{-1/d}))$, and such that $N_{\vre}$ intersects every rectangle of volume
$\varepsilon$ in $[0,1]^d$. 
\end{itemize}
\end{thm}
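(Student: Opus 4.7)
The implication $(i)\Rightarrow(ii)$ is routine: given a Danzer set $Y$ of growth $g(T)$, put
\[
N_\varepsilon\df \varepsilon^{1/d}\bigl(Y\cap[0,\varepsilon^{-1/d}]^d\bigr)\subseteq[0,1]^d.
\]
A rectangle of volume $\varepsilon$ in $[0,1]^d$ is the image, under $x\mapsto\varepsilon^{1/d}x$, of a rectangle of volume $1$ in $[0,\varepsilon^{-1/d}]^d$---a convex set of volume $1$ that $Y$ must meet---and the cardinality bound $\#N_\varepsilon=O(g(\varepsilon^{-1/d}))$ follows from $[0,\varepsilon^{-1/d}]^d\subseteq B(0,\sqrt d\,\varepsilon^{-1/d})$ together with the growth of $Y$.

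For $(ii)\Rightarrow(i)$ I would first reduce Danzer's condition to hitting rectangles. John's theorem applied to the maximal-volume ellipsoid inside a convex body $K$ of volume $1$ yields an inscribed box (parallelepiped with mutually perpendicular sides, in arbitrary orientation) of volume at least a dimensional constant $c_d>0$; after a harmless rescaling of $Y$ by a factor depending on $c_d$, it thus suffices to meet every box of volume $1$ in $\R^d$. The construction is multiscale. For each integer $k\geq 0$, let $L_k=2^k$ and, in every cube of side $(1+\sqrt d)L_k$ centered at a point of $L_k\Z^d$, install a translated rescaled copy of $N_{\varepsilon_k}$ with $\varepsilon_k\df \bigl((1+\sqrt d)L_k\bigr)^{-d}$. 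Hypothesis $(ii)$ ensures each such copy meets every box of volume $\varepsilon_k\bigl((1+\sqrt d)L_k\bigr)^d=1$ inside its cube, and the $\ell^\infty$ covering-radius bound $L_k/2$ for $L_k\Z^d$ guarantees that every box of longest side $\leq L_k$ is contained in at least one scale-$k$ cube; choosing $k$ with $L_k\in[\ell,2\ell]$ then shows that every box of volume $1$ with longest side $\ell$ is met.

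The substantive issue is the growth bound. At scale $k$ the number of cubes intersecting $B(0,T)$ is $O((T/L_k)^d+1)$, each carrying $O(g(L_k))$ points, contributing $O(T^d g(L_k)/L_k^d)$ when $L_k\lesssim T$; for scales $L_k\gg T$ I would restrict the construction to cubes whose centers lie in a shell of radius comparable to $L_k$, so that only scales $k\leq O(\log T)$ actually contribute to $B(0,T)$. The main obstacle is then to check that summing the series $\sum_k T^d g(2^k)/2^{kd}$ yields $O(g(T))$ for $g$ of polynomial growth, with at most a single logarithmic factor absorbed into the polynomial class in the borderline case $g(T)\sim T^d$---consistent with the paper's subsequent deduction of a Danzer set of growth $O(T^d\log T)$.
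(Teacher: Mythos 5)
The implication $(i)\Rightarrow(ii)$ is the same as the paper's and is fine. The gap is in $(ii)\Rightarrow(i)$, where your multiscale construction breaks the growth bound in precisely the cases that matter.

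Your construction places, at every dyadic scale $k$, a copy of $N_{\varepsilon_k}$ in each of the (mutually overlapping) cubes of side $(1+\sqrt d)2^k$ centered at points of $2^k\Z^d$. This makes the Danzer property easy, but it means every region of $\R^d$ receives points from \emph{all} scales simultaneously, and the growth estimate becomes a sum over scales. As you note, this sum is $T^d\sum_{k\lesssim\log_2 T} g(2^k)/2^{kd}$. Since a Danzer set in $\R^d$ necessarily has $g(x)=\Omega(x^d)$, the ratio $g(2^k)/2^{kd}$ is bounded below, so the sum has $\Theta(\log T)$ terms each of size $\gtrsim 1$; when $g(x)/x^d$ is bounded or grows subpolynomially in $\log x$ (e.g.\ $g(x)=x^d$, or $g(x)=x^d\log x$), the sum is $\Theta(g(T)\log T)$, not $O(g(T))$. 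You cannot ``absorb'' this factor: the theorem asserts an exact equivalence at growth rate $g$, and it is applied in the paper with $g(x)=x^d\log x$ to prove Theorem~\ref{thm:T^dlogT}; an extra log would degrade that corollary to $T^d\log^2 T$.

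The paper avoids this by using a \emph{disjoint} decomposition into annular shells $L_i = Q_{2^i}\setminus Q_{2^{i-1}}$, installing one $\varepsilon$-net $N_i\subset L_i$ per shell. Because the shells are disjoint and $g(x)/x^d$ may be assumed non-decreasing, a geometric-series argument (Proposition~\ref{prop:Comb_Q_implies_Danzer_Q}(ii)) shows $\#(Y\cap Q_{2^i})\le 2\#N_i = O(g(2^i))$, with no sum over scales. The price is that the Danzer property is no longer automatic: one must prove (Lemma~\ref{lem:Layers_main_idea}, resting on Lemma~\ref{lem:box_is_contained_in_cube}) that any box of volume $1$, regardless of position and orientation, leaves a convex piece of volume $\ge C_d$ inside a \emph{single} shell $L_i$. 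This is the key geometric lemma that your proposal does not supply, and without it the disjoint-layer route cannot be taken, while the overlapping-cube route you do take cannot yield the claimed growth.
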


\begin{cor}\label{cor:rational_Danzer}
If $D\subseteq\R^d$ is a Danzer set of growth rate $g(T)$, where
$g(x)$ has polynomial growth, then there exists a Danzer set contained
in 
$\Q^d$ of growth rate $g(T)$.  
\end{cor}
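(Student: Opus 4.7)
The plan is to apply Theorem \ref{thm:Equivalent_Questions} twice: first to pass from $D$ to a family of finite sets $N_\varepsilon$, then to perturb these to rational coordinates, and finally to reassemble a Danzer set in $\Q^d$.

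Starting from $D$, the direction $(i)\Rightarrow(ii)$ of Theorem \ref{thm:Equivalent_Questions} yields, for each $\varepsilon > 0$, a set $N_\varepsilon \subseteq [0,1]^d$ of cardinality $O(g(\varepsilon^{-1/d}))$ meeting every rectangle of volume $\varepsilon$ in $[0,1]^d$. The key geometric observation is that any such rectangle $R$ has minimum side length $\ell_{\min} \ge d^{-(d-1)/2}\,\varepsilon$: since $R\subseteq [0,1]^d$, each side is at most $\sqrt{d}$, and the product $\prod_i \ell_i = \varepsilon$ forces the bound. Consequently the concentric sub-rectangle $R' \subseteq R$ obtained by shrinking each side by factor $2^{-1/d}$ has volume $\varepsilon/2$, and its boundary lies at distance at least $c_d\,\varepsilon$ from $\partial R$, for some constant $c_d > 0$ depending only on $d$.

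Next I would replace each $y \in N_{\varepsilon/2}$ by a rational point $y' \in \Q^d \cap [0,1]^d$ with $\|y-y'\| < c_d\,\varepsilon$ (possible since $B(y, c_d\varepsilon) \cap [0,1]^d$ has nonempty interior and so contains rationals). Call the resulting rational set $N'_\varepsilon$. It still meets every rectangle $R$ of volume $\varepsilon$ in $[0,1]^d$: the sub-rectangle $R'$ has volume $\varepsilon/2$, hence is hit by some $y\in N_{\varepsilon/2}$, and its perturbation $y'$ is within $c_d\,\varepsilon$ of $y$, hence in $R$. Polynomial growth of $g$ gives $|N'_\varepsilon| \le |N_{\varepsilon/2}| = O(g((\varepsilon/2)^{-1/d})) = O(g(\varepsilon^{-1/d}))$.

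Finally, I would apply $(ii)\Rightarrow(i)$ of Theorem \ref{thm:Equivalent_Questions} to the family $\{N'_\varepsilon\}$, restricted to the dyadic subsequence $\varepsilon_n = 2^{-nd}$. This restriction suffices, because any rectangle of volume $\varepsilon \in [\varepsilon_{n+1}, \varepsilon_n]$ contains a concentric sub-rectangle of volume $\varepsilon_{n+1}$ met by $N'_{\varepsilon_{n+1}}$, at the cost of only a constant factor in the cardinality bound. The corresponding scale factors $\varepsilon_n^{-1/d} = 2^n$ are integers, so the multiscale tiling construction in the proof of $(ii)\Rightarrow(i)$ — integer-scaled translates of each rational $N'_{\varepsilon_n}$, unioned over $n$ — outputs a Danzer set contained in $\Q^d$ of growth $O(g(T))$. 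The main obstacle is the rectangle-geometry lemma that underpins the perturbation step; a secondary technical point, ensuring that the reassembly uses only rational scalings, is precisely what forces us to restrict to the dyadic subsequence.
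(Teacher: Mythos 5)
Your proof is correct and uses the same underlying machinery as the paper's, namely the layered construction behind Proposition \ref{prop:Comb_Q_implies_Danzer_Q} and a rational discretization step. The paper is more direct: it rescales $D$ to a set $D'$ hitting all convex sets of volume $C_d$, intersects $D'$ with each layer $L_i$ to get $A_i$, snaps each point of $A_i$ to the $2^d$ vertices of the surrounding cell of a sufficiently fine rational grid $\Gamma_i$ (whose spacing depends on $i$, since the relevant minimum width of a volume-$C_d$ convex body in $L_i$ shrinks with $i$), and then invokes Proposition \ref{prop:Comb_Q_implies_Danzer_Q}. You instead route through both implications of Theorem \ref{thm:Equivalent_Questions} and do the discretization at unit scale, replacing the paper's ``snap to $2^d$ grid vertices'' by an explicit rectangle-shrinking lemma ($\ell_{\min}\gtrsim_d\varepsilon$, so a $2^{-1/d}$ concentric shrink leaves a margin $\gtrsim_d\varepsilon$, inside which a single nearby rational can be chosen). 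Both achieve the same thing; your shrinking lemma makes the bound on the allowable perturbation fully explicit, whereas the paper leaves that to the reader. The one thing to be clear about, which you correctly flag, is that $(ii)\Rightarrow(i)$ cannot be used as a black box: one must open its proof and note that the rescalings are by the integers $2^i$, so rationality of the $N'_{\varepsilon_i}$ is preserved in the final union. (A small cosmetic correction: the proof of $(ii)\Rightarrow(i)$ already uses only the discrete sequence $\varepsilon_i=\alpha_d^{-1}C_d\,2^{-di}$, not a continuum of $\varepsilon$'s, so ``restricting to a dyadic subsequence'' is not an extra reduction you need to justify — it is what the proof already does — and the construction there uses integer dilations, not translates.)
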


Using Theorem \ref{thm:Equivalent_Questions} and known results
for the Danzer-Rogers question, we obtain:

\begin{thm}\label{thm:T^dlogT}
There exists a Danzer set in $\R^d$ of growth rate $T^d\log T$.
\end{thm}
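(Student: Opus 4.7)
The plan is to combine Theorem \ref{thm:Equivalent_Questions} with a probabilistic construction. Applying the equivalence with $g(T)=T^d\log T$ reduces the task to exhibiting, for each $\vre>0$, a set $N_\vre\subseteq [0,1]^d$ of cardinality $O(\vre^{-1}\log(1/\vre))$ that meets every rectangle of volume $\vre$ in $[0,1]^d$; this is an instance of the Danzer-Rogers question.

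To produce $N_\vre$ I would first discretize the family of rectangles. A rectangle in $[0,1]^d$ is specified by finitely many real parameters (center, orientation, and side lengths), and rounding these to a grid of mesh $\vre^{1+1/d}$ yields a finite family $\mathcal{R}$ of cardinality at most $\vre^{-K}$ for some $K=K(d)$, with the property that every rectangle of volume $\vre$ contains some element of $\mathcal{R}$ of volume at least $\vre/2$. Next, I would sample $N=\lceil C\vre^{-1}\log(1/\vre)\rceil$ points independently and uniformly from $[0,1]^d$, with $C$ chosen large compared to $K$. For any fixed $R\in \mathcal{R}$, the probability that all $N$ sample points miss $R$ is at most $(1-\vre/2)^N\le e^{-N\vre/2}\le \vre^{C/2}$, and a union bound over $\mathcal{R}$ gives total failure probability at most $\vre^{C/2-K}<1$, so some realization of the random sample meets every element of $\mathcal{R}$, and hence every rectangle of volume $\vre$.

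The main obstacle is the discretization step: one must keep $\#\mathcal{R}$ polynomial in $1/\vre$ while ensuring that the loss in volume from rounding is at most a constant factor, which is the sensitive point for thin, rotated rectangles. The key quantitative input is that every side of a volume-$\vre$ rectangle inscribed in $[0,1]^d$ has length at least $c_d\vre$, which controls the relative loss per side from rounding at scale $\vre^{1+1/d}$ by a quantity of order $\vre^{1/d}$. Once this discretization is in place, the union bound combined with Theorem \ref{thm:Equivalent_Questions} immediately yields the desired Danzer set of growth $T^d\log T$.
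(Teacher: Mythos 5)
Your proposal is correct and reaches the same $O(T^d\log T)$ bound, but the combinatorial core is genuinely different from the paper's. Both arguments begin the same way: reduce via Theorem \ref{thm:Equivalent_Questions} to constructing, for each $\varepsilon$, a set of size $O(\varepsilon^{-1}\log(1/\varepsilon))$ meeting every box of volume $\varepsilon$ in the unit cube, then take a random sample of that size and close with a union bound. The divergence is in how one obtains a polynomial-size collection of ``effective'' boxes over which to union-bound. The paper restricts all boxes to a fine grid $\Gamma_n$ of about $n^{2d}$ points and bounds the number of distinct traces using the VC-dimension of boxes (Corollary \ref{cor:VCdim_boxes}, via half-spaces and Corollary \ref{cor:VC_of_intersections}) together with the Sauer--Shelah Lemma \ref{lem:bound_ranges}; this is exactly the mechanism of the Haussler--Welzl $\varepsilon$-net theorem, of which the paper notes Proposition \ref{prop:probabilistic_constraction} is a special case. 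You instead discretize the parameter space of boxes directly (center, orientation, side lengths at mesh $\varepsilon^{1+1/d}$), and control the volume loss under rounding by the geometric observation that every side of a volume-$\varepsilon$ box inscribed in $[0,1]^d$ has length at least $c_d\varepsilon$, so translation and rotation errors of order $\varepsilon^{1+1/d}$ perturb the shortest side by only a $1+O(\varepsilon^{1/d})$ factor. Your route avoids VC theory entirely at the cost of a somewhat fiddly geometric verification, which you correctly identify as the sensitive step and for which you supply the key quantitative input; the paper's route outsources the discretization to Sauer--Shelah, which makes the bookkeeping automatic. The two sampling models (Bernoulli on the grid in the paper, i.i.d.\ uniform points in yours) are interchangeable here.
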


Note that for all $d \geq 3$, this improves the result of \cite{BW71}
mentioned above and represents the slowest known growth rate for 
a Danzer set. 
\subsection{Structure of the paper}
We have attempted to keep the different sections of this paper
self-contained. The material on substitution tilings and
the cut-and-project sets, in particular the proofs of Theorems
\ref{thm:Substitution_Not_Danzer} and
\ref{thm:Ratner,cut-and-project}, are contained in \S\ref{sec:substitution} and
\S\ref{sec:cut-and-project} respectively. More results from
homogeneous flows are used in order to prove Theorem \ref{thm:dense_forest} in
\S\ref{sec:dense_forest}. In
\S\ref{sec:equi_combinatoric_question} we introduce some terminology 
from computational geometry and prove Theorem
\ref{thm:Equivalent_Questions} and Corollary
\ref{cor:rational_Danzer}. More background from computational geometry
and the proof of Theorem
\ref{thm:T^dlogT} are in
\S\ref{sec:T^dlogT_construction}. In \S \ref{sec: questions} we list
some open questions related to the Danzer problem. 

\subsection{Acknowledgements} The proof of Theorem
\ref{thm:Ratner,cut-and-project} given here relies on a suggestion of
Andreas Str\"ombergsson. Our initial strategy required a detailed analysis of
lattices in algebraic groups satisfying some conditions, and an appeal
to Ratner's theorem on orbit-closures for homogeneous flows. We reducd
the problem to a question on algebraic groups which we were
unable to solve ourselves and after 
consulting with several experts, we received a complete answer from 
Dave Morris, 
and his argument appeared in an appendix of the original
version of this paper. Later Str\"ombergsson gave us a simple
argument which made it possible to avoid the results of Morris and to
avoid Ratner's theorem. The proof which appears here is 
Str\"ombergsson's and we are grateful to him for agreeing to include it, 
and to Dave Morris for his 
earlier proof. 
We are also grateful to  Manfred
Einsiedler, Jens Marklof, Tom Meyerovitch, Andrei Rapinchuk, Saurabh Ray, Uri Shapira and Shakhar
Smorodinsky  for useful
discussions. Finally, we are grateful to Michael Boshernitzan
for telling us about Danzer's question.  We acknowledge the support of
ERC starter grant DLGAPS 279893.

\section{Nets that Arise from Substitution Tilings}\label{sec:substitution}
In this section we prove Theorem
\ref{thm:Substitution_Not_Danzer}, i.e. that primitive substitution
tilings do not give rise to Danzer sets. We begin by quickly recalling the
basics of the theory of substitution tilings. For further reading we
refer to \cite{GS87,Ra99,Ro04,So97}. 

\subsection{Background on substitutions}\label{subsec:substitution_basics}
A \emph{tiling} of $\R^d$ is a countable collection of tiles $\{S_i\}$
in $\R^d$, each of which is the closure of its interior, such that
tiles intersect
only in their boundaries, and with $\bigcup_iS_i=\R^d$.
We say that the tiling is 
\emph{polygonal} if the tiles are
$d$-dimensional polytopes, i.e. convex bounded sets that can be
obtained as an intersection of finitely many half-spaces. All tilings
considered below are polygonal. 


Given a finite collection of tiles $\mathcal{F}=\{T_1,\ldots,T_n\}$ in
$\R^d$, a \emph{substitution} is a map $H$ that assigns to every
$T_i$ a tiling of the set $T_i$ by isometric copies of $\zeta
T_1,\ldots,\zeta T_n$, where $\zeta\in(0,1)$ is fixed and does not
depend on $i$. The definition of $H$ extends in an obvious way to
replace a finite union of isometric copies of the $T_i$'s by isometric
copies of the $\zeta T_j$'s. By applying $H$ repeatedly, and rescaling
the small tiles 
back to their original sizes, we tile larger and larger regions of the
space. \emph{Substitution tilings} are tilings of $\R^d$ that are
obtained as limits of those finite tilings. More precisely, set
$\xi=\zeta^{-1}>1$ and define  
\[\mathcal{P}=\{(\xi H)^m(T_i):m\in\N, i\in\{1,\ldots,n\}\}.\]
The \emph{substitution tiling space} $X_H$ is the set of tilings
$\tau$ of $\R^d$ having the property that for every compact set
$K\subseteq\R^d$, there exists some $P\in\mathcal{P}$, such that the
patch defined by $\{\text{tiles }T\text{ of }\tau: T\subseteq K\}$ is
a sub-patch of $P$. The tilings $\tau\in X_H$ are \emph{substitution tilings}
that correspond to $H$, and the constant $\xi$ is referred to as the
\emph{inflation constant} of $H$. For every $i$ the isometric copies
of $T_i$ are called \emph{tiles of type $i$}. 
 
A substitution $H$ on $\mathcal{F}=\{T_1,\ldots,T_n\}$ defines the
\emph{substitution matrix}, which is a non-negative integer matrix
$A_H=(a_{ij})$, where $a_{ij}$ is the number of appearances of
isometric copies of $\zeta T_i$ in $H(T_j)$. $H$ is called
\emph{primitive} if $A_H$ is a primitive matrix. Namely, $A_H^m$ has
strictly positive entries, for some $m\in\N$. Observe that primitivity
is a natural assumption in this context, since otherwise we could get
a smaller substitution system by restricting $H$ to a subset of
$\{T_1,\ldots,T_n\}$ (and possibly replacing $H$ by some fixed power of
$H$). For example, the matrix
$A_H=\begin{pmatrix}2&1\\1&1\end{pmatrix}$ corresponds to the
substitution of the Penrose triangles that are presented below in
                                Figure
                                \ref{Figure:Penrose_substitution}. 

\begin{figure}[ht!]
\begin{center}
\includegraphics{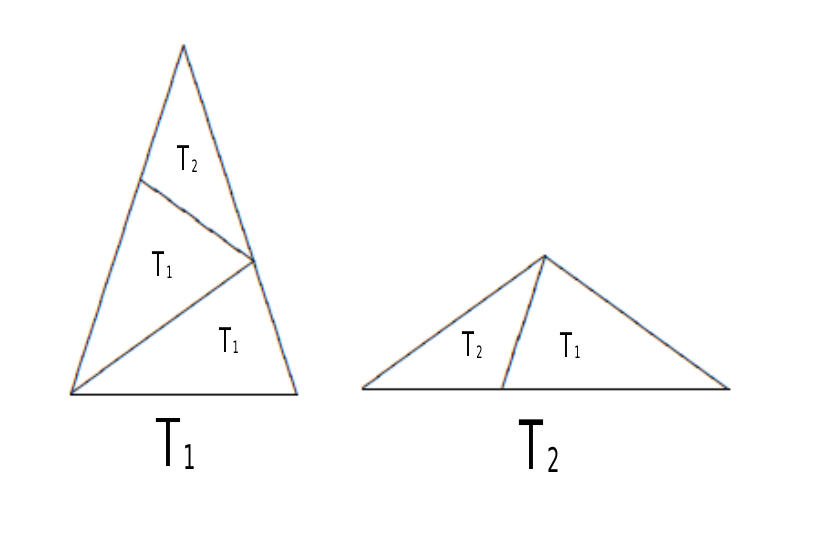}
\caption{Penrose substitution rule.}
\label{Figure:Penrose_substitution}
\end{center}
\end{figure}

Primitive substitution tiling satisfy a useful `inflation' property 
(see
\cite{Ro04}). For $m\in\N$ it is convenient to consider the set of
inflated tiles $\mathcal{F}^{(m)}=\{\xi^mT_1,\ldots,\xi^mT_n\}$ with
the dissection rule $H^{(m)}$ that is induced by $H$, and the
substitution tiling space $X_{H^{(m)}}$. 
\begin{prop}\label{prop:tau_m_Def}
If $H$ is a primitive substitution then $X_H\neq\emptyset$ and for
every $\tau\in X_H$ and $m\in\N$ there exists a tiling $\tau_m\in
X_{H^{(m)}}$ that satisfies $H^m(\tau_m)=\tau$.    
\end{prop}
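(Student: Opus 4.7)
The plan is to establish the two assertions separately. For the non-emptiness of $X_H$, I would produce an explicit nested sequence of patches exhausting $\R^d$. Using primitivity, pick $m_0$ with $A_H^{m_0}$ strictly positive, so that $(\xi H)^{m_0}(T_i)$ contains an isometric copy of every $T_j$. In particular it contains a copy of $T_i$; after a preliminary rigid motion we may assume this inner copy coincides with $T_i$ itself and sits in the interior of $(\xi H)^{m_0}(T_i)$. Iterating the inflation then produces an increasing chain $(\xi H)^{k m_0}(T_i)$ of patches whose diameters grow like $\xi^{k m_0}$, hence whose union is all of $\R^d$. The resulting tiling lies in $X_H$ by construction, since every finite sub-patch is contained in one of the $(\xi H)^{k m_0}(T_i) \in \mathcal{P}$.

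For the second assertion I would use a compactness/diagonalisation argument over finer and finer approximations. Fix $\tau \in X_H$ and $m \in \N$, and let $K_1 \subseteq K_2 \subseteq \cdots$ exhaust $\R^d$. The definition of $X_H$ furnishes, for each $n$, indices $k_n, i_n$ with the patch of $\tau$ sitting inside $K_n$ a subpatch of $P_n \df (\xi H)^{k_n}(T_{i_n})$; after enlarging $K_n$ if necessary we may insist $k_n \geq m$. Now $P_n$ carries a canonical level-$m$ coarsening obtained by writing $P_n = H^m\bigl((\xi H)^{k_n - m}(T_{i_n})\bigr)$, which groups adjacent tiles of $P_n$ into supertiles of the form $\xi^m T_j$. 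Pulling this coarsening back defines a level-$m$ grouping of the tiles of $\tau$ lying in $K_n$.

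The main obstacle is to verify that these groupings for different $n$ are mutually compatible, so that they glue into a single tiling $\tau_m$ of $\R^d$. I would overcome this via standard tiling-space compactness: because all patches $(\xi H)^{k_n - m}(T_{i_n})$ use only the finitely many prototiles $\mathcal{F}^{(m)}$, the resulting candidates for $\tau_m$ live in a compact set for the local (Gromov--Hausdorff type) topology on tilings, and a diagonal extraction along $n$ produces a limit $\tau_m$ that agrees with the $n$-th level-$m$ grouping on every $K_n$. By construction $H^m(\tau_m) = \tau$, and every finite patch of $\tau_m$ is a subpatch of some $(\xi H)^{k_n - m}(T_{i_n})$, which is precisely an element of the hierarchy analogous to $\mathcal{P}$ for the substitution $H^{(m)}$; hence $\tau_m \in X_{H^{(m)}}$.
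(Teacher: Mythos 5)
The paper does not prove this proposition itself; it cites Robinson's survey \cite{Ro04}, so I compare your argument to the standard one.

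Your second part (the existence of $\tau_m$) is essentially the standard argument: lift the decomposition $P_n = H^m\bigl((\xi H)^{k_n-m}(T_{i_n})\bigr)$ to a level-$m$ grouping of the tiles of $\tau$ in $K_n$, use the compactness of the space of tilings by the finitely many prototiles $\mathcal F^{(m)}$, and pass to a diagonal limit. This is fine, provided you are a bit careful that the groupings are only defined on a slightly smaller region than $K_n$ (supertiles may be truncated at the boundary), so the limit should be taken over these slightly smaller regions.

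The first part has a real gap. You assert that ``after a preliminary rigid motion we may assume this inner copy coincides with $T_i$ itself,'' and then conclude that $(\xi H)^{km_0}(T_i)$ is an increasing chain of patches. The copy of $T_i$ inside $(\xi H)^{m_0}(T_i)$ is some isometric image $\phi T_i$, and the condition ``$T_i$ is literally a tile of $(\xi H)^{m_0}(T_i)$'' is an intrinsic property of the substitution: applying a rigid motion to $\R^d$, or re-positioning the prototile $T_i$ by an isometry (which conjugates the substitution), does not change it. In general $\phi$ may have a nontrivial translational part, or an irrational rotational part (as in the pinwheel substitution, where tiles appear in infinitely many orientations), so there is no reason $T_i$ should reappear in place. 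Without this, the patches $(\xi H)^{km_0}(T_i)$ occupy ever-larger regions $\xi^{km_0}T_i$ but are simply \emph{not} nested: $(\xi H)^{km_0}(T_i)$ does not restrict to $(\xi H)^{(k-1)m_0}(T_i)$ on $\xi^{(k-1)m_0}T_i$. The correct fix is to set $P_k \df \phi^{-k}\bigl((\xi H)^{km_0}(T_i)\bigr)$. Using the isometry-equivariance $H(gT)=gH(T)$ one checks that $P_{k+1}\supseteq P_k$: indeed $(\xi H)^{(k+1)m_0}(T_i)$ refines $\phi(\xi H)^{km_0}(T_i)$ on the supertile $\phi T_i$, and applying $\phi^{-(k+1)}$ gives $P_{k+1}\supseteq\phi^{-1}P_k\circ\ldots$; more precisely $P_{k+1}=\phi^{-(k+1)}(\xi H)^{(k+1)m_0}(T_i)\supseteq\phi^{-(k+1)}\phi(\xi H)^{km_0}(T_i)=P_k$. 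Since $\phi T_i$ lies in the interior of the support of $(\xi H)^{m_0}(T_i)$, the inner radius of $P_k$ about the fixed tile $T_i$ grows geometrically and $\bigcup_k P_k=\R^d$. Every finite patch of this union is, up to isometry, a subpatch of some $(\xi H)^{km_0}(T_i)\in\mathcal P$, hence the union lies in $X_H$. Alternatively, you could avoid the nesting issue entirely by the same compactness argument you use in the second part: re-translate each $(\xi H)^{km_0}(T_i)$ so that a distinguished interior tile is near the origin, and take a subsequential limit.
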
 

We use the following terminology throughout the proof of Theorem
\ref{thm:Substitution_Not_Danzer}. Let $H$ be a primitive substitution
defined on a finite set of polygonal tiles
$\mathcal{F}=\{T_1,\ldots,T_n\}$ in $\R^d$, with an inflation constant
$\xi>1$. A \emph{choice function} is a function which assigns to each
tile $T_i$ a point $h(T_i)\in T_i$. For a
tiling $\tau_0\in X_H$, and a choice function $h$, denote by
$Y_{\tau_0,h}$ the Delone set\footnote{A {\em Delone set} or {\em
    separated net} is a set $Y \subseteq \R^d$ satisfying $\inf_{x,y \in
    Y, x \neq y} \|x-y\| >0,\ \sup_{x \in \R^d} \inf_{y \in Y}
  \|x-y\|<\infty$.}
that is obtained by placing one point
in each tile of $\tau_0$, with respect to the choices of $h$. More
precisely, each tile of $\tau_0$ is equal to $g(T_i)$, for some $i$
and some isometry $g$ of $\R^d$. The function $h$ can naturally be
extended to all the tiles of $\tau_0$, then to collections of tiles of
$\tau_0$, and in turn to tilings $\tau\in X_H$. We define
$Y_{\tau_0,h}=h(\tau_0)$. Theorem \ref{thm:Substitution_Not_Danzer}
says that $Y_{\tau_0,h}$ is not a Danzer set, for any primitive
substitution polygonal tiling in $X_H$ and any choice function $h$. 

In this section we will use the letter $D$ to denote the Euclidean
distance. For a closed set $A$ and a point $x \in \R^d$, we will
write $D(x,A) = \sup_{a \in A} D(x,a)$, and for a set $A$ and $\delta>0$ we
denote by $U_\delta(A)$ the $\delta$-neighborhood of $A$. Also let $V_d$ denote the volume of the
$d$-dimensional unit ball.

\begin{proof}[Proof of Theorem \ref{thm:Substitution_Not_Danzer}]
Let $h$ be a choice function and let $Y=Y_{\tau_0. h}$. We
first consider the case where $h(T_i)\in int(T_i)$ for every
$i$. Denote by  
\begin{equation}\label{eq:delta_substitution}
\delta=\min_i\{D(h(T_i),\partial T_i)\},
\end{equation}
and note that we are assuming for the moment that $\delta>0$. Denote
by $\partial\tau$ the union 
of all the boundaries of tiles of a tiling $\tau$. Then our
definition of $\delta$ ensures that any element $x \in
Y$ satisfies $D(x, \partial \tau_0) \geq \delta$. 

If a $d-1$-dimensional face of a tile in $\tau_0$ contains
a segment of length $t$, then the same face of the same type of tile
in $\tau_m$ contains a segment of length $t\cdot\xi^m$. The tiles are
polygonal, so let $m$ be large enough such that some face $F$ of some
tile in $\tau_m$ contains a segment $L$ of length
$\ell$ where $\ell \delta^{d-1}V_{d-1}>1$. Since 
$\partial\tau_m\subseteq\partial\tau_0$, $L$ is also contained in
$\partial\tau_0$. By (\ref{eq:delta_substitution}), $U_\delta(L)$
misses $Y$. Clearly $U_{\delta}(L)$ is convex, and the
choice of $m$ and $\ell$ guarantees that the 
volume of $U_{\delta}(L)$ is at least 1.  

Now suppose $\delta=0$, i.e. for some $i$, $h(T_i) \in \partial
T_i$. Choose
$\delta_1$ small enough so that the following hold: if $h(T_i)
\notin \partial T_i$ then $2 \delta_1 < D(h(T_i), \partial
T_i)$; if $h(T_i) \in \partial T_i$ then $h(T_i)$ is contained in some
of the boundary faces of $T_i$, and we require that $2\delta_1$ is smaller than
the distance from $h(T_i)$ to the boundary faces of $T_i$ which do not
contain $h(T_i).$ With this choice of $\delta_1$, let $L
\subseteq \partial \tau_m$ be a line as in the preceding case, where
$m$ is chosen large enough so that the length $\ell$ of $L$ satisfies  $\ell
\delta_1^{d-1}V_{d-1}>1$. Let $v$ be a vector of length $\delta_1$
which is 
perpendicular to the boundary face containing $L$ and let $L' = L+v$. Then
$U'=U_{\delta_1}(L')$ is contained in $U_{2\delta_1}(L)$ and thus
contains none of the points of $Y$ which are not in
$\partial \tau_0$. Moreover $L'$ is of distance $\delta_1$ from the
boundary faces containing $L$, so $U'$ is disjoint from the boundary
faces but every point of $U'$ is within distance $2 \delta_1$ from
these boundary faces. Our definition of $\delta_1$ ensures that $U'$ does not
contain points belonging to $Y \cap \partial \tau_0$.  Thus $U'$
misses $Y$, and is a convex set of volume at least 1, as required.

Finally consider the set of vertices $Y$ of the tiling $\tau_0$. Let $\delta_1$ be
small enough so that for any boundary face of any $T_i$, the distance
from the face to any of the vertices not on the face is greater than
$2\delta_1$. Using the same $m$ and the same 
line $L \subseteq \partial \tau_0$ as in the preceding paragraph, define $L', U'=U_{\delta_1}(L')$ as
above. Then the fact that $L \subseteq \partial
\tau_0$ and the definition of $\delta_1$ ensures that $U' \cap Y =
\varnothing,$ so $Y$ is not a Danzer set. 
\end{proof}

\section{Cut-and-project sets}\label{sec:cut-and-project}
Let $d, k, n$ be integers with $d>1, k\geq 1$ and $n = d+k$, and write
$\R^n$ as the direct sum of $\R^d$ and $\R^k$. We refer to the numbers
$d, k, n$ as the \emph{physical dimension, internal dimension}, and
\emph{full dimension}, the spaces $\R^d$ and $\R^k$ are the
\emph{physical and internal spaces}, and denote by $\pi_{phys}:\R^n
\to \R^d$ and $\pi_{int}: \R^n \to \R^k$ the projections associated
with this direct sum decomposition; i.e. for $\vec{x}=(x_1, \ldots,
x_n)$,  
\[ \pi_{phys}(\vec{x}) = (x_1, \ldots, x_d), \ \ \pi_{int}(\vec{x}) =
(x_{d+1}, \ldots, x_n).\]
Let $L \subseteq\R^n$ be a grid (recall that a grid is a translate of
a lattice) and let $W \subseteq \R^k$ be a bounded subset. The set 
\[ \Lambda(L, W) \df \pi_{phys} \left(L \cap   \pi_{int}^{-1}(W)\right)\]
is called a
\emph{cut-and-project set} or \emph{model set}. Such sets have been
extensively studied (see \cite{Me94, BM00, Se95} and the references
therein). In particular the vertex set of a Penrose tiling provides an
example of a cut-and-project set. In this section, following
\cite{MS13}, we will apply homogeneous dynamics in order to analyze
the geometry of cut-and-project sets, and to prove Theorem
\ref{thm:Ratner,cut-and-project}. 

We begin with a dynamical characterization of Danzer sets. It will be
more convenient to work with the class of dilates of Danzer sets. We
say that $Y \subseteq \R^d$ is a {\em Dilate of a Danzer set} (or {\em
  DDanzer} for short), if there is $c>0$ such that the dilate $cY =
\{cy: y \in Y\}$ is Danzer. Let
$\ASL_d(\R)\cong \SL_d(\R)\ltimes\R^d$ denote the group of affine
orientation-preserving, measure-preserving transformations of $\R^d$. Since $\ASL_d(\R)$ maps
convex sets to convex sets and preserves their volumes, the property
of being DDanzer set is invariant under the action of $\ASL_d(\R)$ on
subsets of $\R^d$. Moreover it can be characterized in terms of this
action.

\begin{prop}\label{prop:dynamical}
Let $Y \subseteq \R^d$. Then $Y$ is DDanzer if and only if there is $T>0$ such 
that for every $g \in \ASL_d(\R), \, gY \cap B(0,T) \neq \varnothing.$ 
\end{prop}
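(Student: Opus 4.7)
The plan is to translate the DDanzer property into a statement about which convex sets $Y$ meets, reduce from general convex bodies to ellipsoids via John's theorem, and then identify the family of ellipsoids of fixed volume as an $\ASL_d(\R)$-orbit of a single Euclidean ball.

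I would first reformulate the DDanzer condition volumetrically. Since scaling by $c$ multiplies volumes by $c^d$, the set $cY$ is Danzer iff $Y$ meets every convex set of volume $c^{-d}$. Moreover, any convex body of volume $V$ contains a homothetic copy of itself of any smaller prescribed positive volume, so meeting every convex set of volume exactly $V_0$ is the same as meeting every convex set of volume at least $V_0$. Hence $Y$ is DDanzer iff there exists $V_0 > 0$ such that $Y$ intersects every convex set of volume $\geq V_0$.

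Next I would invoke John's theorem: every convex body $C \subseteq \R^d$ with nonempty interior contains an ellipsoid of volume at least $\Vol(C)/d^d$. Combined with the fact that ellipsoids are themselves convex, this yields the equivalence of the previous condition with: there exists $V_1 > 0$ such that $Y$ meets every ellipsoid of volume $\geq V_1$. At this point I would use the standard fact that $\ASL_d(\R)$ preserves volume and acts transitively on ellipsoids of any fixed volume (any two positive-definite symmetric matrices with the same determinant are $\SL_d(\R)$-conjugate, and translations handle the centers). Consequently, the family of ellipsoids of volume $V_d T^d$ coincides with the orbit $\{gB(0,T) : g \in \ASL_d(\R)\}$, and any ellipsoid of larger volume contains a concentric homothetic copy of volume $V_d T^d$. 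So meeting every ellipsoid of volume $\geq V_d T^d$ is equivalent to $Y \cap gB(0,T) \neq \varnothing$ for every $g \in \ASL_d(\R)$.

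Finally, substituting $g \mapsto g^{-1}$ (a bijection of $\ASL_d(\R)$) converts this into the stated form $gY \cap B(0,T) \neq \varnothing$ for every $g$. The argument is essentially bookkeeping; the only mildly delicate point, which I see as the main obstacle, is keeping the dimension-dependent constants consistent when passing between $V_0$, $V_1$, $T$, and the scaling factor $c$, so that the final value of $c$ for which $cY$ is Danzer is honestly produced from the given $T$ (and vice versa). Once John's theorem and the transitivity of $\ASL_d(\R)$ on equal-volume ellipsoids are in hand, no further ideas are needed.
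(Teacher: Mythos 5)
Your proposal is correct and takes essentially the same route as the paper, which likewise dismisses this as a direct corollary of John's theorem combined with the fact that $\ASL_d(\R)$ acts transitively on the ellipsoids of a fixed volume; you have simply filled in the bookkeeping (scaling from Danzer to DDanzer, passing from ``volume exactly $V_0$'' to ``volume $\geq V_0$'', the substitution $g\mapsto g^{-1}$) that the paper leaves implicit. One tiny wording slip: you mean that two positive-definite matrices of equal determinant are $\SL_d(\R)$-\emph{congruent} ($A_2 = g^T A_1 g$), not conjugate, but this does not affect the argument.
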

\begin{proof}
This is a straightforward corollary of John's theorem 
(see \cite[Lecture 3]{Ba97}) that asserts that any convex subset $K$
of $\R^d$ contains an ellipsoid $E$ with $E \subseteq K \subseteq
dE$ (where $dE$ denotes the dilation of $E$ by a factor of $d$). Since
all ellipsoids of volume $\Vol(B(0,T))$ map to the ball 
$B(0,T)$ under an affine transformation, the result follows.  
\end{proof}

We begin with a few reductions of the problem. It will simplify notation to take $\R^2$ and in fact this entails no loss of generality: 

\begin{prop}\label{prop:reduction_d=2}
Suppose $d \geq 3$. If $\Lambda \subseteq \R^d$ is a union of
cut-and-project sets 
(respectively, a DDanzer set), $\R^d = \R^2 \oplus
\R^{d-2}$ is a direct sum decomposition with associated projections
$\pi_1: \R^d \to \R^2, \pi_2 : \R^d \to \R^{d-2}$, and $W \subseteq
\R^{d-2}$ is bounded, then $\Lambda' \df 
 \{\pi_1(x): x \in \Lambda \cap \pi_2^{-1}(W)\}$ is also a union of
 cut-and-project sets (respectively, a DDanzer set).  
\end{prop}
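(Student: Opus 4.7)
The proposition consists of two independent stability statements, which I would prove separately.

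\textbf{Cut-and-project part.} If $\Lambda=\bigcup_{i=1}^N \Lambda(L_i,W_i)$ with grids $L_i\subseteq\R^{d+k_i}$ and windows $W_i\subseteq\R^{k_i}$, my approach is a definition chase. Using the given decomposition $\R^d=\R^2\oplus\R^{d-2}$ to split the physical component of each $L_i$, I regard $L_i$ as lying in $\R^{d+k_i}=\R^2\oplus(\R^{d-2}\oplus\R^{k_i})$ with new physical space $\R^2$ and new internal space $\R^{d-2+k_i}$. The operation ``intersect with $\pi_2^{-1}(W)$, then project by $\pi_1$'' is then precisely the cut-and-project construction applied to the same grid $L_i$ with enlarged window $W\times W_i$. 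Taking the union over $i$ displays $\Lambda'$ as a finite union of cut-and-project sets.

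\textbf{DDanzer part.} I would invoke Proposition \ref{prop:dynamical}, which furnishes $T_0>0$ with $g\Lambda\cap B(0,T_0)\neq\varnothing$ for every $g\in\ASL_d(\R)$. Implicit in the DDanzer conclusion (else $\Lambda'$ could be empty) is that $W$ has nonempty interior, and after a volume-preserving translation of $\R^d$ in the $\R^{d-2}$ direction I may assume $B(0,r)\subseteq W$ for some $r>0$. Given any $g'\in\ASL_2(\R)$, I extend it to a volume-preserving affine map of $\R^d$ by
\[
\tilde g(x_1,x_2)\df\left(s^{(d-2)/2}\,g'(x_1),\,s^{-1}x_2\right),\qquad s\df r/T_0.
\]
The scalings $s^{(d-2)/2}$ on $\R^2$ and $s^{-1}$ on $\R^{d-2}$ are chosen so that their combined determinant equals $1$, placing $\tilde g$ in $\ASL_d(\R)$. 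The DDanzer property of $\Lambda$ applied to $\tilde g$ produces $x=(x_1,x_2)\in\Lambda$ with $\tilde gx\in B(0,T_0)$. Reading off the $\R^{d-2}$-coordinate gives $\|x_2\|\le sT_0=r$, so $x_2\in W$ and consequently $x_1=\pi_1(x)\in\Lambda'$; reading off the $\R^2$-coordinate gives $\|g'(x_1)\|\le T_0\,s^{-(d-2)/2}=T_0(T_0/r)^{(d-2)/2}\df T'$, a constant independent of $g'$. Applying Proposition \ref{prop:dynamical} in $\R^2$ concludes that $\Lambda'$ is DDanzer.

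\textbf{Main obstacle.} The real content is producing the correct extension $\tilde g$ of $g'$. The anisotropic diagonal rescaling above is the central trick: it lets a single element of $\ASL_d(\R)$ simultaneously implement the prescribed $g'$ on the physical $\R^2$-factor and contract the $\R^{d-2}$-factor strongly enough to land inside $W$, with the two scale factors tied together precisely to preserve volume. Once this $\tilde g$ is written down both parts of the proposition reduce to straightforward bookkeeping, so I expect no further difficulty.
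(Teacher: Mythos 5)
The paper's proof is ``Left to the reader,'' so there is no argument in the text to compare against, but your proof is correct. The cut-and-project half is the right definition chase: re-regard each $L_i$ inside $\R^{d+k_i}=\R^2\oplus(\R^{d-2}\oplus\R^{k_i})$ with the enlarged window $W\times W_i$, noting that the new physical dimension $2>1$ and new internal dimension $d-2+k_i\geq 1$ still fit the paper's definition. In the DDanzer half, the anisotropic extension $\tilde g$ of $g'$ is exactly the device needed, and the volume and norm bookkeeping giving the uniform bound $T'$ checks out. Two small remarks. First, you tacitly apply a linear change of coordinates on $\R^d$ (and on each $\R^{d+k_i}$) so that the given splitting $\R^d=\R^2\oplus\R^{d-2}$ becomes the standard coordinate splitting appearing in the paper's definition of $\Lambda(L,W)$, and so that the computation with the Euclidean ball $B(0,T_0)$ decouples into the two factors; this is harmless since both the class of finite unions of cut-and-project sets and the DDanzer property are invariant under $\GL_d(\R)$, but it deserves a sentence. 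Second, your observation that $W$ must have nonempty interior for the DDanzer conclusion is correct and worth flagging: in the application to Theorem \ref{thm:Ratner,cut-and-project} this proposition is invoked in contrapositive form with $W$ chosen by the authors, so this is a benign implicit hypothesis rather than a defect in your argument. A marginally shorter version of the DDanzer half avoids Proposition \ref{prop:dynamical} altogether (rescale so $\Lambda$ is Danzer, translate so $W\supseteq B$ with $\Vol_{d-2}(B)=v>0$, and intersect $\Lambda$ with the convex cylinder $C\times B$ of volume $1$ for $C\subseteq\R^2$ convex of volume $1/v$), but your version fits naturally with the $\ASL_d(\R)$ framework used throughout the section.
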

\begin{proof}
Left to the reader. 
\end{proof}

From this point on we consider $\R^n$ with its standard $\Q$-structure
$\Q^n$.
We will make a convenient reduction. We say that a
lattice $L \subseteq \R^n$ is \emph{$\Q$-irreducible with respect to
 the physical space} if there is no proper
rational subspace $R \subseteq \R^n$ such that $R \cap L$ is a lattice
in $R$ and $R$ contains $\ker \pi_{int}$ (the second condition may  be
stated equivalently by saying that $R$ contains the physical space
$\R^d$). We say that a grid $L$ is $\Q$-irreducible with respect to the
physical space if the underlying lattice $L-L$ is, and we will say that a
cut-and-project set $\Lambda$ is {\em 
  irreducible} if it can be presented as $\Lambda(L,W)$ for some
grid $L$ and some window $W$, so that $L$ is $\Q$-irreducible with
respect to the physical space. 

\begin{prop}\label{prop:Q_irreducible}
Suppose $\Lambda$ is a finite union of cut-and-project sets. Then
there is a finite union of cut-and-project sets
$\Lambda' = \Lambda'_1 \cup
\cdots \cup \Lambda'_s$ containing $\Lambda$, such that each
$\Lambda'_j$ is irreducible. 
\end{prop}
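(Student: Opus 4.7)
My plan is to reduce to a single cut-and-project set $\Lambda(L,W)$ with $L = L_0 + v$, since a finite union of finite unions is a finite union. The core construction is the smallest subspace $\bar R \subseteq \R^n$ satisfying (i) $\R^d \subseteq \bar R$, (ii) $\bar R$ is rational with respect to $\Q^n$, and (iii) $L_0 \cap \bar R$ is a lattice in $\bar R$. Existence and uniqueness of $\bar R$ rest on showing that the family of subspaces satisfying (i)--(iii) is closed under intersection: (i) and (ii) are routine, and (iii) is equivalent to asserting that $\bar R$ is $L_0$-rational, i.e.\ of the form $V\otimes_\Q \R$ for some $\Q$-subspace $V \subseteq L_0 \otimes_\Z \Q$. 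The assignment $V \mapsto V\otimes_\Q \R$ is a lattice isomorphism between $\Q$-subspaces of $L_0 \otimes \Q$ and $L_0$-rational $\R$-subspaces of $\R^n$, and flatness of $\R$ over $\Q$ implies that this isomorphism commutes with finite intersections; hence the $L_0$-rational subspaces are closed under $\cap$, and intersecting further with $\Q^n$-rationality preserves both conditions.

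Writing $R_{int} := \pi_{int}(\bar R)$, so that $\bar R = \R^d \oplus R_{int}$, the quotient $\R^n/\bar R$ is identified via $\pi_{int}$ with $\R^k/R_{int}$. Since $L_0/(L_0 \cap \bar R)$ is a lattice in $\R^n/\bar R$ and the image of $\pi_{int}^{-1}(W)$ is bounded there, only finitely many cosets of $\bar R$ in $L$ meet $\pi_{int}^{-1}(W)$. Choosing coset representatives $y_1,\dots,y_m \in L$ and setting $W_j := (W - \pi_{int}(y_j)) \cap R_{int}$ yields
\[
\Lambda(L,W) \;=\; \bigcup_{j=1}^m \Bigl( \pi_{phys}(y_j) + \Lambda_{\bar R}(L_0 \cap \bar R,\, W_j) \Bigr),
\]
where $\Lambda_{\bar R}$ denotes the cut-and-project construction performed inside the smaller ambient space $\bar R = \R^d \oplus R_{int}$. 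Absorbing $\pi_{phys}(y_j)$ into the grid realises each summand as a cut-and-project set $\Lambda'_j$ whose underlying lattice is $L_0 \cap \bar R$.

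By minimality of $\bar R$, the lattice $L_0 \cap \bar R$ is $\Q$-irreducible with respect to $\R^d$ inside $\bar R$: any proper $\Q^n$-rational subspace $R' \subsetneq \bar R$ containing $\R^d$ with $L_0 \cap R'$ a lattice in $R'$ would itself satisfy (i)--(iii), contradicting the minimal choice. The only subtle case is $\bar R = \R^d$, in which each piece is a translate of the lattice $L_0 \cap \R^d \subseteq \R^d$; this translate can nonetheless be covered by an irreducible cut-and-project set in $\R^{d+1}$ by lifting through a sufficiently irrational ``slope'' (e.g.\ taking the grid $L' = \{(m,\, \alpha\cdot m + s) : m \in L_0 \cap \R^d,\ s \in \Z\} \subseteq \R^{d+1}$ for an $\alpha \in \R^d$ whose entries together with $1$ are linearly independent over $\Q$, and window $[0,1)$). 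The main obstacle is verifying the closure-under-intersection property above, since $\Q^n$-rationality and $L_0$-rationality a priori provide distinct $\Q$-forms of $\R^n$; once that is in hand via the tensor/flatness argument, the rest of the proof reduces to routine bookkeeping.
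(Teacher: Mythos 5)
Your proof is correct, and it takes a genuinely different route from the paper's. The paper never constructs a minimal subspace: it picks \emph{any} proper rational subspace $R$ witnessing non-irreducibility, produces finitely many cut-and-project sets by decomposing $L$ into cosets of $R$ (the same coset decomposition you use), and then inducts on the internal dimension. You instead build the minimal admissible subspace $\bar R$ up front, which yields irreducibility of the pieces in one stroke but requires the extra lemma that the family of subspaces satisfying (i)--(iii) is closed under intersection. Your verification of that lemma is sound -- and the worry you flag about the two $\Q$-forms is actually a non-issue: each of the three conditions is closed under intersection on its own ($\Q^n$-rationality because $\Q^n$ is a $\Q$-structure, $L_0$-rationality by your flatness/tensor argument), so no reconciliation of the two $\Q$-forms is needed. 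The induction the paper uses buys freedom from this lemma; your direct construction buys a cleaner one-step argument and, as a bonus, makes explicit the degenerate case $\bar R = \R^d$ (internal dimension $0$), which the paper's induction also bottoms out at but does not address, even though its definition in \S3 requires $k\ge1$. One small repair in your lift to $\R^{d+1}$: writing $L_0\cap\R^d = A\Z^d$, irreducibility requires $\alpha\cdot m\notin\Z$ for all nonzero $m\in L_0\cap\R^d$, i.e.\ that the entries of $A^{T}\alpha$ together with $1$ be $\Q$-linearly independent; your stated condition on $\alpha$ itself suffices only when $A$ is rational.
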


\begin{proof}
By induction on the number of cut-and-project sets defining $\Lambda$,
we can assume that this number is one, i.e. $\Lambda$ is a
cut-and-project set. Also by induction on the internal dimension 
it suffices to show that if $\Lambda$ is not
irreducible, and the internal dimension is $k$, then there is a finite union of
cut-and-project sets containing $\Lambda$ with internal dimension smaller than $k$.

Our notation is as follows: $L$ is the grid, $L_0 = L-L$ is the
underlying lattice, 
$\pi_{phys}, \pi_{int}$ are the projections, $V_{phys} = \ker
\pi_{int}$ is the physical space, $V_{int} = \ker \pi_{phys}$ is the
internal space, and $W \subseteq V_{int}$ is the window,  in the construction of
$\Lambda$. Let $R$ be a proper rational subspace of $\R^n$ such that
$L' \df L_0 \cap R$ is a lattice in $R$, and $R$ contains $V_{phys}$.
Let $R' = R \cap V_{int}$. 
We will find some positive integer $s$ and for $j=1,
\ldots, s$ find bounded sets $W'_j \subseteq
R'$, and vectors $y_j \in R$, satisfying the following: 
for any $\ell \in L$ such that $\pi_{int}(\ell) \in W$ there is $j
\in \{1, \ldots, s\}$ and 
$\til \ell \in L'_j \df  L'+y_j$ such that $\pi_{phys}(\til \ell) =
\pi_{phys}(\ell)$ and $\pi_{int}(\til \ell) \in W'_j$. 
This will complete the proof, since it implies that $\Lambda =
\pi_{phys}(L \cap \pi_{int}^{-1} (W))$ is contained in $\bigcup
\Lambda'_j$, where $\Lambda'_j \df \pi_{phys}(L'_j \cap \pi_{int}^{-1}(W'_j))$. 

Let $V_0
\subseteq V_{int}$ be a subspace such that $R \oplus V_0 =
\R^n$, let $\pi, \pi_0$ be the projections associated with this direct
sum decomposition. Since $V_0 \subseteq V_{int}$ we have $\pi_0 = \pi_{int}
\circ \pi_0$ and the assumption that $V_{phys} \subseteq R$ implies
$\pi_0 = \pi_0 \circ \pi_{int}$. Since $L'$ is a lattice in $R$,  
$\pi_0(L)$ is discrete in $V_0$. Since $W$ is bounded, so is
$\pi_0(W)$, and hence $\pi_0(L) \cap \pi_0(W)$ is finite. Let $\ell_1, \ldots, \ell_s \in L$ such that 
$$\pi_0(L)
\cap \pi_0(W) = \{\pi_0(\ell_j): j=1, \ldots, s\}.$$

For $j=1, \ldots, s$ write $\ell_j = x_j+y_j$ where $x_j =
\pi_0(\ell_j)$ and $y_j = \pi(\ell_j),$ and let $W'_j = R \cap  (W
-x_j)$. Clearly $W'_j \subseteq R$, and since $W$ is bounded, 
so is $W'_j$. 

Suppose $\ell \in L$ with $\pi_{int}(\ell) \in W$. We can write $\ell=
\pi_0(\ell)+\til \ell$ with $\til \ell
\in R$. Then $\pi_{phys}(\til \ell) = \pi_{phys}(\ell)$ since $\ell -
\til \ell \in V_0 \subseteq V_{int}$. Since $\pi_0 =
\pi_0 \circ \pi_{int}$, we have $\pi_0(\ell) \in \pi_0(W)$, and there is $j$ so that
$\pi_0(\ell)=\pi_0(\ell_j) = x_j$, and 
$$
\til \ell - y_j = \ell-\pi_0(\ell) -(\ell_j - x_j) = \ell - \ell_j \in
L \cap R. 
$$
This shows $\til \ell \in L'_j.$ 
Finally to see that $\pi_{int}(\til \ell) \in W'_j$, note that 
$$\pi_{int}(\til \ell) = \pi_{int}(\ell) - \pi_{int}\circ \pi_0(\ell) =
\pi_{int}(\ell) - \pi_{0}(\ell) = \pi_{int}(\ell) - x_j \in W -
x_j,$$
and $\pi_{int}(\til \ell) \in R$ since $\pi_0 \circ \pi_{int}(\til
\ell) = \pi_0(\til \ell)=0.$
\end{proof}

Let $\mathcal{X}_n$ denote the space of
unimodular lattices in $\R^n$. Recall that this space is identified
with the quotient $\SL_n(\R)/\SL_n(\Z)$ via the map which sends the
coset $g\SL_n(\Z)$ to the lattice $g\Z^n$, and that this
identification intertwines the action of $\SL_n(\R)$ on
$\mathcal{X}_n$ by linear transformations of $\R^n$, with the
homogeneous left-action $g_1\tau(g_2) = \tau(g_1g_2),$ where $\tau:
\SL_n(\R) \to \mathcal{X}_n$ is the natural projection. 
A crucial ingredient in our argument will be the following fact:

\begin{prop}[Andreas Str\"ombergsson]\label{prop: strombergsson}
Let $H_0 \cong \SL_2(\R)$ be embedded in $\SL_n(\R)$ in the upper left-hand
corner; i.e., with respect to the decomposition $\R^n = \R^2 \oplus
\R^{n-2}$, $H_0$ acts via its standard action on the first
summand and trivially on the second summand. Then for any $x \in
\mathcal{X}_n$, the orbit $H_0x$ is unbounded (i.e. its closure
is not
compact). 
\end{prop}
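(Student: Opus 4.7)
The plan is to apply Mahler's compactness criterion: for $x \in \mathcal{X}_n$ represented by a unimodular lattice $L \subseteq \R^n$, the orbit $H_0 x$ fails to be bounded precisely when one can find $h \in H_0$ making $hL$ contain arbitrarily short nonzero vectors. Set $V_1 = \R^2 \oplus \{0\}$ and $V_2 = \{0\} \oplus \R^{n-2}$, with associated projections $\pi_{12}\colon \R^n \to \R^2$ and $\pi_V\colon \R^n \to \R^{n-2}$; recall that $H_0$ acts standardly on $V_1$ and trivially on $V_2$. I would split the argument into two cases based on whether $L \cap V_1$ is trivial.

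The first case $L \cap V_1 \neq \{0\}$ is immediate: pick a nonzero $v \in L \cap V_1$. Since $v$ lies entirely in $\R^2$ and $\SL_2(\R)$ acts transitively on $\R^2 \setminus \{0\}$, for any $\varepsilon>0$ there is $h \in H_0$ with $\|hv\| < \varepsilon$, so $hL$ has arbitrarily short vectors.

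The interesting case is $L \cap V_1 = \{0\}$. Here $\pi_V|_L$ is injective, so $\pi_V(L)$ is a free abelian subgroup of $\R^{n-2}$ of rank $n$. But a finitely generated discrete subgroup of $\R^{n-2}$ has rank at most $n-2 < n$, so $\pi_V(L)$ cannot be discrete, and hence there is a sequence of distinct $v_m \in L \setminus \{0\}$ with $\pi_V(v_m) \to 0$. Since $L \cap V_2$ is a discrete subgroup of $V_2$, its nonzero elements have norm bounded below by some $c > 0$, so for all $m$ sufficiently large we must have $\pi_{12}(v_m) \neq 0$. Using that $\SL_2(\R)$ can contract any nonzero vector in $\R^2$ to arbitrarily small norm, pick $h_m \in H_0$ with $\|h_m \pi_{12}(v_m)\| < 1/m$; then
\[
\|h_m v_m\|^2 \;=\; \|h_m \pi_{12}(v_m)\|^2 + \|\pi_V(v_m)\|^2 \;\longrightarrow\; 0,
\]
so $h_m L$ again contains arbitrarily short nonzero vectors.

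The main delicate point is in the second case: one must simultaneously control both components of $h_m v_m$. This works because discreteness of $L \cap V_2$ prevents $\pi_V(v_m)\to 0$ from coming from elements of $V_2$ (forcing $\pi_{12}(v_m) \neq 0$), while $h_m$ affects only the $\pi_{12}$-coordinates and leaves the already-small $\pi_V(v_m)$ untouched. With these observations Mahler's criterion immediately yields unboundedness of $H_0 x$ in both cases. I do not expect to need Ratner's theorem, nor any structural result on algebraic subgroups of $\SL_n$; the only inputs beyond Mahler are the transitivity of $\SL_2(\R)$ on $\R^2\setminus\{0\}$ and the elementary rank-vs-dimension obstruction to discreteness.
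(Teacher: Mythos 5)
Your proof is correct and takes essentially the same route as the paper: both arguments decompose vectors via the projections to $\R^2$ and $\R^{n-2}$, observe that $H_0$ can contract the first coordinate while fixing the second, and then exploit the dichotomy that either $L$ meets $\R^2\oplus\{0\}$ nontrivially or the projection of $L$ to $\R^{n-2}$ is a rank-$n$ (hence non-discrete) subgroup, producing vectors with arbitrarily small $\R^{n-2}$-component. The only cosmetic differences are that the paper phrases the argument as a contradiction with a fixed Mahler constant $\vre$ (which lets it skip your check that $\pi_{12}(v_m)\neq 0$, since the case $\pi_{12}(v_m)=0$ already yields a short vector with $h=e$) and uses the sup norm instead of the Euclidean norm.
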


\begin{proof}
Suppose by contradiction that $H_0x$ is 
bounded, and let $g_0 \in \SL_n(\R)$ such that $x = \tau(g_0)$. By
Mahler's compactness criterion (see e.g. \cite[Chap. 3]{GL87}), there is $\vre>0$ such that 
\eq{eq: for a contradiction}{
\text{for any } v \in g_0\Z^n \sm \{0\} \text{ and any }h \in H_0, \, \|hv\|\geq
\vre,
}
where $\|\cdot \|$ denotes the sup norm on $\R^n$. Let $P, Q$
denote respectively the projections $\R^n \to \R^2, \, \R^n \to
\R^{n-2}$ corresponding to the direct sum decomposition $\R^n \cong
\R^2 \oplus \R^{n-2}.$ Then $H_0$ acts transitively on
all nonzero vectors in the image of $P$, and acts trivially on the
image of $Q$; that is for any $v \in \R^n$ and any $h \in H_0$, $P(hv) =
hP(v), Q(hv)=Q(v)$. Also since we have chosen the sup norm we can
write $\|v 
\| = \max (\|P(v)\|, \|Q(v)\|)$ for any $v \in \R^n$. Since $g_0\Z^n$
is an abelian group of rank $n$, either $g_0\Z^n \cap \ker \, Q \neq \{0\}$, or $Q(g_0\Z^n)$ is not
discrete in $\R^{n-2}$. In either case there is $v_0 \in g_0\Z^n \sm \{0\}$ such that
$\|Q(v_0)\|<\vre$. Since $H_0$ acts transitively on $\R^2  \sm
\{0\}$, there is $h \in H_0$ such that $\|hP(v_0)\|< \vre$. This implies
that $$\|hv_0\| = \max(\|P(hv_0)\|, \|Q(hv_0)\|) = \max(\|hP(v_0)\|,
\|Q(v_0)\|) < \vre, $$
a contradiction to 
\equ{eq: for a contradiction}. 
\end{proof}

The following statement was proved in
response to our question by Dave Morris. The proof of Morris relied on
structure theory for algebraic groups and appears in the appendix to a preliminary
version of our paper. Here we give a simpler proof
using Proposition \ref{prop: strombergsson}.

\begin{cor}[Dave Morris]\label{thm:Morris}
Let $H_0 \cong \SL_2(\R)$ be as above. 
Then for any semisimple
group $G \subseteq \SL_n(\R)$ containing $H_0$, there is no
conjugate $G'$ of $G$ in $\SL_n(\R)$ whose intersection with
$\SL_n(\Z)$  
is a cocompact lattice in $G'$. 
\end{cor}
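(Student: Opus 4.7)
The plan is to assume the existence of such a $G'$ and derive a contradiction from Proposition \ref{prop: strombergsson}. Write $G' = gGg^{-1}$ for some $g \in \SL_n(\R)$ and set $\Gamma' \df G' \cap \SL_n(\Z)$. If $\Gamma'$ is cocompact in $G'$, then the natural $G'$-equivariant map $G'/\Gamma' \to \mathcal{X}_n$ sending $h\Gamma'$ to $h\Z^n$ is well-defined and injective (this is immediate from the equality $\Gamma' = G' \cap \SL_n(\Z)$), and its image is the orbit $G'\cdot\tau(e)$. Compactness of $G'/\Gamma'$ thus implies that $G'\cdot\tau(e)$ is a compact, and in particular bounded, subset of $\mathcal{X}_n$.

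Since $H_0 \subseteq G$ we have $gH_0g^{-1}\subseteq G'$, and therefore $gH_0g^{-1}\cdot\tau(e)$ is bounded in $\mathcal{X}_n$. I would then set $x_0 \df g^{-1}\tau(e)$ and observe that
$$H_0\cdot x_0 = \{hg^{-1}\Z^n : h \in H_0\} = g^{-1}\cdot\bigl(gH_0g^{-1}\cdot\tau(e)\bigr).$$
Since left multiplication by $g^{-1}$ is a homeomorphism of $\mathcal{X}_n$ and therefore preserves boundedness, it follows that $H_0\cdot x_0$ is bounded, contradicting Proposition \ref{prop: strombergsson}.

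The one bookkeeping point that requires care is that Proposition \ref{prop: strombergsson} is formulated for the fixed embedded copy $H_0$ and not for its conjugates; one cannot directly plug $gH_0g^{-1}$ into the proposition, so the trick is to transport the bounded orbit back by $g^{-1}$, which amounts to moving the basepoint in $\mathcal{X}_n$ from $\tau(e)$ to $g^{-1}\tau(e)$. Semisimplicity of $G$ plays no explicit role in the argument beyond ensuring that ``cocompact lattice in $G'$'' is the natural notion to consider.
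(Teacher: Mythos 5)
Your proof is correct and follows essentially the same route as the paper: the paper's proof writes $G'=g_0^{-1}Gg_0$, observes that cocompactness makes $G'\Z^n$ compact, and then notes that $\overline{H_0\tau(g_0)}\subseteq G\tau(g_0)$, which is homeomorphic to $G'\Z^n$, contradicting Proposition \ref{prop: strombergsson}. Your version spells out the homeomorphism $G'/\Gamma'\to G'\tau(e)$ and the translation of the bounded orbit from $gH_0g^{-1}\cdot\tau(e)$ to $H_0\cdot g^{-1}\tau(e)$ more explicitly, but with $g=g_0^{-1}$ the two arguments coincide (your basepoint $x_0=g^{-1}\Z^n$ is exactly the paper's $\tau(g_0)$). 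One cosmetic remark: the injectivity of $G'/\Gamma'\to\mathcal{X}_n$ is not actually needed for compactness of the image --- continuity of the orbit map together with compactness of $G'/\Gamma'$ suffices --- though including it does no harm. Your closing observation that semisimplicity of $G$ is not used in this argument is accurate and worth noting.
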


\begin{proof}
If such a conjugate $G' = g_0^{-1} Gg_0$ existed, then $G'\Z^n$ would
be compact, and hence so would the orbit-closure
$\overline{H_0\tau(g_0)} \subset G\tau(g_0) \cong g_0^{-1} G'\Z^n$,
contradicting Proposition \ref{prop:
  strombergsson}. 
\end{proof}

Let $\mathcal{Y}_n \df \ASL_n(\R)/\ASL_n(\Z)$ be the space of
$n$-dimensional unimodular grids, and let $H \df \ASL_2(\R)$ be
embedded in $\ASL_n(\R)$ in the upper left-hand corner. Equivalently,
the action of $H$ on $\R^n$ preserves the physical space and acts on
it as the group of affine volume preserving
transformations. Proposition \ref{prop: strombergsson} restricts the
orbit-closures of $H$ acting on 
$\mathcal{Y}_n$. 

\begin{cor}\label{cor:from_witte1}
Let $H$ be embedded as above. Then for any $y \in \mathcal{Y}_n,$ the
orbit-closure $\overline{Hy}$ is noncompact.  
\end{cor}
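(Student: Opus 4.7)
The plan is to reduce this statement about grids to the corresponding statement about lattices, namely Proposition \ref{prop: strombergsson}, via the natural ``forget translation'' map.

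First, I would describe a continuous surjection $p \colon \mathcal{Y}_n \to \mathcal{X}_n$ sending a grid $L$ to the underlying lattice $L - L$. At the group level this is induced by the quotient homomorphism $q \colon \ASL_n(\R) \to \SL_n(\R)$ which kills the translation subgroup $\R^n$; this descends to the quotients because $\ASL_n(\Z) = \SL_n(\Z) \ltimes \Z^n$ is mapped onto $\SL_n(\Z)$. Since it is the map induced on quotients by a continuous group homomorphism, $p$ is continuous.

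Second, I would check the relevant equivariance. The restriction of $q$ to $H = \ASL_2(\R)$ is the homomorphism $H \to H_0 = \SL_2(\R)$ that forgets the $\R^2$-translation part; for $h \in H$ with image $h_0 = q(h) \in H_0$, we have $p(h \cdot y) = h_0 \cdot p(y)$ for all $y \in \mathcal{Y}_n$. In particular $p(Hy) = H_0 \, p(y)$.

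Third, I would conclude by contradiction. Suppose $\overline{Hy}$ were compact in $\mathcal{Y}_n$. Since $p$ is continuous, $p\bigl(\overline{Hy}\bigr)$ is a compact subset of $\mathcal{X}_n$ containing $p(Hy) = H_0 \, p(y)$, hence containing $\overline{H_0 \, p(y)}$. This would force the $H_0$-orbit closure of $p(y)$ to be compact, contradicting Proposition \ref{prop: strombergsson} applied to the unimodular lattice $p(y) \in \mathcal{X}_n$.

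There is no real obstacle here; the only thing to be careful about is the bookkeeping that identifies $H \subset \ASL_n(\R)$ with the correct upper left-hand corner copy and shows that $q$ carries it onto the $H_0$ appearing in Proposition \ref{prop: strombergsson}, which follows directly from the definitions of both embeddings.
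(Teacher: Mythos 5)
Your proof is correct and follows essentially the same approach as the paper: project $\mathcal{Y}_n \to \mathcal{X}_n$ via the linear-part map, observe equivariance with respect to $H \twoheadrightarrow H_0$, and apply Proposition \ref{prop: strombergsson}. The only cosmetic difference is that the paper notes the projection is proper (compact fibers) to write the image of the orbit closure as $\overline{H_0 x}$, whereas you bypass that by arguing directly that a compact $\overline{Hy}$ would have compact continuous image containing $\overline{H_0 p(y)}$; both routes reach the same contradiction.
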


\begin{proof}
Let $P: \ASL_n(\R) \to \SL_n(\R)$ be the natural projection sending an
affine map to its linear part. 
Then $P$ is defined over $\Q$, and induces an equivariant map
$\bar{P}: \mathcal{Y}_n \to \mathcal{X}_n$, which has a compact
fiber. In particular $\bar{P}$ is a proper map. We denote $H_0 \df
P(H) \cong \SL_2(\R), \, x = \bar{P}(y)$. 
By Proposition \ref{prop: strombergsson}, 
 $\bar{P}\left(\overline{Hy}\right)
= \overline{H_0x}$ is not compact, and hence $\overline{Hy}$ is not
compact.
\ignore{
, it is enough to show that $\bar{P}\left(\overline{Hy}\right)
= \overline{H_0x}$ is not compact. Note that $H_0$ is embedded in
$\SL_n(\R)$ via the embedding in Theorem \ref{thm:Morris}.  

According to Ratner's orbit-closure theorem (see \cite{Ratner91} or
\cite{morris book}), there is a subgroup $G'
\subseteq \SL_n(\R)$ such that $\overline{H_0x} = G'x$ and $G'$ has the
following properties: 
\begin{itemize}
\item[(a)]
$G'$ is generated by unipotents. 
\item[(b)]
$G'$ contains $H_0$. 
\item[(c)]
The stabilizer of $x$ in $G'$ is a lattice in $G'$; equivalently, $g_1G'
g_1^{-1}$ is defined over $\Q$. 
\end{itemize}


Let $G_0$ be the Levi subgroup of $G'$, i.e. $G_0$ is a reductive
group such that $G'$ has a decomposition as a semidirect product of
$G_0$ and the unipotent radical of $G'$. Since $G'$ is generated by
unipotents, $G_0$ is semisimple with no compact factors. Moreover we can take $G_0$ to be
any maximal semisimple subgroup of $G'$, and we take it so that
$g_1G_0g_1^{-1}$ is defined over $\Q$, i.e. the stabilizer of $x$ in
$G_0$ is a lattice in $G_0$. This implies that $G_0x$ is a closed
subset of $G'x$. Since $H_0$ is semisimple,
and all maximal semisimple subgroups of $G'$ are conjugate, $G_0$
contains a conjugate of $H_0$.  
Applying Theorem \ref{thm:Morris} (with $G_0$ instead of $G$) we see
that the orbit $G_0x$ is not compact, so $\overline{H_0x}=G'x$ is not
compact. }
\end{proof}

\begin{cor}\label{cor:all_translations}
Keeping the notations and assumptions of Corollary
\ref{cor:from_witte1}, assume that the linear part of the grid $y$ is
$\Q$-irreducible with respect to the physical space. Then the
orbit-closure $\overline{Hy}$ is invariant under all translations in
$\R^n$.  
\end{cor}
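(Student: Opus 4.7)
The plan is to study the subgroup
\[
\Sigma \df \bigl\{v \in \R^n : t_v\,\overline{Hy}=\overline{Hy}\bigr\} \subseteq \R^n,
\]
where $t_v$ denotes translation by $v$, and to show $\Sigma=\R^n$. That $\Sigma$ is a closed subgroup of $\R^n$ follows from a standard continuity argument: if $v_k\to v$ with $v_k\in\Sigma$, then for every $y''\in\overline{Hy}$ one has $t_{v_k}y''\to t_v y''$, whence $t_v y''\in\overline{Hy}$. I would reduce the goal to showing that $\Sigma$ contains both $V_{phys}$ and $\pi_{int}(L_0)$, where $y=L_0+t_0$, and then invoke the $\Q$-irreducibility hypothesis to conclude that $V_{phys}+\overline{\pi_{int}(L_0)}=\R^n$.

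The inclusion $V_{phys}\subseteq\Sigma$ is immediate, since the translations by $V_{phys}$ lie in $H$ and $\overline{Hy}$ is $H$-invariant. For $\pi_{int}(L_0)\subseteq\Sigma$, I would exploit the identity (valid since $\pi_{int}(\ell)=\ell-\pi_{phys}(\ell)$ and $\ell\in L_0$)
\[
t_{\pi_{int}(\ell)}y \;=\; L_0+t_0+\pi_{int}(\ell) \;=\; L_0+t_0-\pi_{phys}(\ell) \;=\; t_{-\pi_{phys}(\ell)}y \;\in\; Hy.
\]
To promote this to an inclusion for all of $\overline{Hy}$, I would first observe that the translation subgroup normalises $H$ inside $\ASL_n(\R)$: a direct computation gives $t_v(h_0,u)t_{-v}=(h_0,\,u+(1-h_0)v)$, which lies in $H$ because $(1-h_0)v=(1-h_0)\pi_{phys}(v)\in V_{phys}$ (using that $h_0\in\SL_2(\R)\subseteq\SL_n(\R)$ acts trivially on $V_{int}$). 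Hence, for any $y'=\lim h_k y\in\overline{Hy}$, we can write $t_{\pi_{int}(\ell)}h_k=h'_k\,t_{\pi_{int}(\ell)}$ with $h'_k\in H$, so $t_{\pi_{int}(\ell)}y'=\lim h'_k\,t_{-\pi_{phys}(\ell)}y\in\overline{Hy}$, and the same argument with $\ell$ replaced by $-\ell$ yields $\pi_{int}(\ell)\in\Sigma$.

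The step I expect to be the main obstacle is deducing, from the $\Q$-irreducibility of the linear part of $y$, that $\pi_{int}(L_0)$ is dense in $V_{int}$. The idea is to note that
\[
\overline{L_0+V_{phys}} \;=\; \pi_{int}^{-1}\!\bigl(\overline{\pi_{int}(L_0)}\bigr)
\]
is a closed subgroup of $\R^n$ containing the lattice $L_0$. Its image in the torus $\R^n/L_0$ is the closure of the connected subgroup $(V_{phys}+L_0)/L_0$, hence a connected subtorus, and so $\overline{L_0+V_{phys}}=R+L_0$ for some subspace $R\supseteq V_{phys}$ with $R\cap L_0$ a lattice in $R$. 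If $R$ were a proper subspace of $\R^n$ it would witness a failure of $\Q$-irreducibility; therefore $R=\R^n$, and $\overline{\pi_{int}(L_0)}=V_{int}$.

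Combining the three inclusions, $\Sigma$ is a closed subgroup of $\R^n$ containing $V_{phys}+\overline{\pi_{int}(L_0)}=V_{phys}+V_{int}=\R^n$, so $\Sigma=\R^n$ and $\overline{Hy}$ is invariant under every translation in $\R^n$.
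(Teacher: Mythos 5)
Your proof is correct and reaches the same conclusion, but it is organized rather differently from the paper's. The paper's proof works on the torus $Ty \cong \R^n/L_0$: it lets $S \subseteq T$ be the physical translations, shows via $\Q$-irreducibility and the classification of orbit closures of connected subgroups in tori that $\overline{Sy} = Ty$, and then uses normality of $T$ in $\ASL_n(\R)$ to write $\overline{hSy} = hTy = Thy \subseteq \Omega$ for every $h \in H$, concluding $T$-invariance of $\Omega$ by density of $Hy$ in $\Omega$. You instead work directly with the stabilizer $\Sigma \subseteq T$ of $\Omega$, replacing the torus-orbit-closure fact by the elementary (and equivalent) observation that $\overline{\pi_{int}(L_0)} = V_{int}$, which you derive from $\Q$-irreducibility via the closed subgroup $\overline{L_0 + V_{phys}}$ of $\R^n$; and in place of normality of $T$ in $\ASL_n(\R)$ you use the dual fact that $T$ normalizes $H$ (verified by the explicit conjugation $t_v(h_0,u)t_{-v} = (h_0, u+(1-h_0)v)$, which lands in $H$ precisely because $h_0$ is trivial on $V_{int}$). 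Your version is more hands-on and self-contained: it avoids invoking the structure theorem for orbit closures of $\R^k$-flows on tori and isolates the lattice-theoretic content ($\pi_{int}(L_0)$ dense in $V_{int}$) explicitly. The paper's version is shorter and phrases the hypothesis exactly in the form used in the subsequent product statement (Corollary \ref{cor:from_witte2}). One small point worth tightening in your write-up: the closedness argument for $\Sigma$ as you defined it (with equality $t_v\overline{Hy}=\overline{Hy}$) needs to be applied to both $v_k$ and $-v_k$ to recover equality in the limit, not just the one-sided containment $t_v\overline{Hy}\subseteq\overline{Hy}$.
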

\begin{proof}
We keep the notations of the previous proof. 
Let $T
\cong \R^n$ be the unipotent radical of $\ASL_n(\R)$, i.e. the normal
subgroup of affine maps which are actually translations. 
We need to show that $\Omega \df \overline{Hy}$ is
$T$-invariant for any $y \in \mathcal{Y}_n$. Let $S \subseteq T, \, S \cong \R^2$ be the
group of translations in the direction of the physical which act
trivially on the internal space. 
The assumption that $y$ is $\Q$-irreducible with respect to the physical
space implies that there is no intermediate linear subspace $S
\subseteq S' \varsubsetneq T$ such that $S'y$ is closed, and this implies
that $\overline{Sy} = Ty.$ Since $T$ is
normal in $\ASL_n(\R)$, for any $h \in H$, $\Omega \supseteq
\overline{hSy} = hTy = Thy$, i.e. there is a dense set of $z \in
\Omega$ for which $Tz \subseteq \Omega$. This implies that $\Omega$ is
$T$-invariant. 
\end{proof}

We will need similar statements for products of spaces
$\mathcal{Y}_{n_i}$. 
If $z_{\ell}$ is a sequence in a topological space,  we will write
$z_\ell\xrightarrow{\ell\to\infty} \infty$ if the sequence has no
convergent 
subsequence.

\begin{prop}\label{prop:general_fact}
Suppose $\mathcal{Z}_1, \ldots, \mathcal{Z}_r$ are locally compact
spaces, $H$ is a topological group acting continuously  
on each $\mathcal{Z}_i$, such that for every $i$  and every $z \in
\mathcal{Z}_i$ there is a sequence $(h_j) \subseteq H$ for which $h_jz
\xrightarrow{j\to\infty} \infty$. Then for every $(z_1, \ldots, z_r)
\in\mathcal{Z}_1 \times \cdots \times \mathcal{Z}_r$ there is a
sequence $(h_j) \subseteq H$ such that for each $i$, $h_jz_i
\xrightarrow{j\to\infty}\infty$. 
\end{prop}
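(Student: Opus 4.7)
The plan is to proceed by induction on $r$. The base case $r=1$ is the hypothesis. For the inductive step, assume the statement for $r-1$ and fix $(z_1,\ldots,z_r)$. Applying the inductive hypothesis to $(z_1,\ldots,z_{r-1})$ yields a sequence $(g_k)\subseteq H$ with $g_k z_i \to \infty$ in $\mathcal{Z}_i$ for every $i<r$. I then examine $(g_k z_r)$: if some subsequence of it has no cluster point in $\mathcal{Z}_r$, passing to that subsequence of $(g_k)$ completes the proof. Otherwise, using local compactness we may pass to a subsequence along which $g_k z_r \to y$ for some $y\in\mathcal{Z}_r$, and then invoke the hypothesis at the \emph{new} point $y$ (this is where it is crucial that the hypothesis is stated for every $z$, not just the chosen $z_i$'s) to obtain $(f_n)\subseteq H$ with $f_n y \to \infty$.

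The desired sequence will be constructed diagonally as $h_j = f_{n_j} g_{k_j}$. Fix compact exhaustions $K_1^{(i)}\subseteq K_2^{(i)}\subseteq\cdots$ of each $\mathcal{Z}_i$ (the spaces of interest in this paper are all $\sigma$-compact). At step $j$, I would first choose $n_j$ with $n_j\to\infty$ so that $f_{n_j} y \notin K_j^{(r)}$; since each $f_{n_j}$ acts as a homeomorphism, $f_{n_j}^{-1}(K_j^{(r)})$ is compact and excludes $y$, hence it is disjoint from some open neighborhood $U_j$ of $y$. Then pick $k_j>k_{j-1}$ large enough that $g_{k_j} z_r\in U_j$, which holds eventually because $g_k z_r \to y$, and simultaneously, for each $i<r$, $g_{k_j} z_i \notin f_{n_j}^{-1}(K_j^{(i)})$, which holds eventually because $g_k z_i \to \infty$ and $f_{n_j}^{-1}(K_j^{(i)})$ is compact. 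By construction $h_j z_i = f_{n_j}(g_{k_j} z_i) \notin K_j^{(i)}$ for every $i\leq r$, and since the $K_j^{(i)}$ exhaust $\mathcal{Z}_i$ this forces $h_j z_i \to \infty$ for each $i$, as required.

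The main obstacle is precisely this diagonalization: composition by $f_{n_j}$ (what forces escape in the $r$-th coordinate) could in principle pull the first $r-1$ coordinates back into a compact region and undo the escape already achieved via $(g_k)$. The resolution exploits the order of the quantifiers. One chooses $n_j$ first, which fixes the compact sets $f_{n_j}^{-1}(K_j^{(i)})$, and only afterwards selects $k_j$ large enough to lie outside those particular compact sets in all the lower coordinates while also placing $g_{k_j} z_r$ in the small neighborhood $U_j$ of $y$. Both requirements on $k_j$ can be met simultaneously because they each involve an eventual condition on the single sequence $(g_k)$, for the single pre-selected $n_j$.
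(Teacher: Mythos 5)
Your proof is correct and follows essentially the same strategy as the paper: induction on $r$, then passing to a subsequence where $g_k z_r$ converges to some $y$, invoking the hypothesis at $y$, and assembling $h_j$ as a composition of a term escaping from $y$ with a term of $(g_k)$ chosen late enough. The paper organizes the diagonalization slightly differently (a nested induction on the coordinate index $i_0$ with repeated subsequence replacement, yielding $h_j = h'_j g_j$ after reindexing), whereas you state the diagonal choice of $(n_j, k_j)$ directly; yours is arguably cleaner to read. Both arguments implicitly use compact exhaustions, hence $\sigma$-compactness, which you rightly flag and which the paper's own proof also assumes without comment.
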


\begin{proof}
By induction on $r$. If $r=1$ this is immediate from assumption, and
we suppose $r \geq 2$ and $(z_1, \ldots, z_r) \in\mathcal{Z}_1 \times
\cdots \times \mathcal{Z}_r$. By the induction hypothesis there is a
sequence $(g_j) \subseteq H$ such that $g_j
z_i\xrightarrow{j\to\infty}\infty$ in $\mathcal{Z}_i$ for $i=1,
\ldots, r-1$. If $g_j z_r \xrightarrow{j\to\infty} \infty$ in
$\mathcal{Z}_r$ then there is nothing to prove. Otherwise we may
replace $g_j$ by a subsequence to assume that $g_jz_r \to z$. By
assumption there is a sequence $(h'_j) \subseteq H$ such that $h'_j z
\xrightarrow{j\to\infty} \infty$ in $\mathcal{Z}_r$. Our required
subsequence will be obtained by replacing $(g_j)$ with a subsequence
and selecting $h_j = h'_j g_j$.  

To this end, by induction on $i_0=1, \ldots, r$, we will choose a
subsequence  of $(g_j)$ (which we continue to denote by $(g_j)$) with the following property:  
\eq{eq: prove this by induction}{
h'_j \til g_j z_i \xrightarrow{j\to\infty}\infty,  \text{ for every
subsequence } (\til g_j) \subseteq (g_j) \text{ and } i <i_0.
}
The base of the induction corresponds to the case $i_0=1$ in which
case \equ{eq: prove this by induction} is vacuously satisfied. Let
$\{K_\ell: \ell \in \N\}$ be an exhaustion of $\mathcal{Z}_{i_0}$ by
compact sets. Suppose first that $i_0 < r$. Then for any $j$ there is
$\ell$ so that if $z \notin K_{\ell}$ then $h'_jz \notin K_j$. This
implies that there is  $j_0 = j_0(j)$ such that for all $j' \geq j_0$,
$h'_j g_{j'} \notin K_j$. Therefore if we replace 
$(g_j)$ by the subsequence $(g_{j_0(j)})$ then \equ{eq: prove this by
  induction} will hold. Finally if $i_0 = r$ then since $g_j z_r \to
z$ and 
$h'_{j} z \xrightarrow{j\to\infty} \infty$, for each $j$ we can find
$j_0 = j_0(j)$ such that for $j' \geq j_0$, $h'_j g_{j'}z_r \notin
K_j$ and so,  if we replace $(g_j)$ by $(g_{j_0(j)})$ then \equ{eq:
  prove this by induction} will hold. 
\end{proof}

We will need a similar but stronger statement for the case of
translations on vector spaces.  
 
\begin{prop}\label{prop:similar_for_subspaces}
Suppose $V_1, \ldots, V_r$ are vector spaces, $P_i : V_1 \times
\cdots\times V_r \to V_i$ is the natural projection, $Q_i \subseteq V_i$
is a hyperplane for each $i$, and $\bar{P}_i$ is the composition of
$P_i$ with the quotient map $V_i \to V_i/Q_i$.  If $U \subseteq V_1
\times\cdots \times V_r$ is a linear subspace such that $P_i(U)=V_i$
for each $i$, then there is a sequence $(u_j) \subseteq U$ such that for
each $i$, $\bar{P}_i(u_j) \xrightarrow{j\to\infty} \infty$ in
$V_i/Q_i$. 
\end{prop}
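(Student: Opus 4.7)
My plan is to exploit the fact that each quotient $V_i/Q_i$ is one-dimensional (since $Q_i$ is a hyperplane in $V_i$), so that each composite $\bar P_i|_U : U \to V_i/Q_i$ is simply a linear functional on $U$. Under the identification $V_i/Q_i \cong \R$, ``going to infinity'' in $V_i/Q_i$ just means the absolute value of this functional tends to infinity. The problem therefore reduces to producing a single vector $u_0 \in U$ on which every one of the functionals $\bar P_i|_U$ is simultaneously nonzero: integer multiples $u_j \df j u_0$ will then do the job automatically, since $\bar P_i(j u_0) = j\, \bar P_i(u_0)$.

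First I would verify that each restriction $\bar P_i|_U$ is a nonzero linear map. Equivalently, I need $P_i(U) \not\subseteq Q_i$, and this follows immediately from the hypothesis $P_i(U) = V_i$ combined with the strict containment $Q_i \subsetneq V_i$ given by the hyperplane assumption. Hence $\ker(\bar P_i|_U)$ is a proper linear subspace of $U$ for every $i$.

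Next I would invoke the elementary fact that a real vector space is never a finite union of proper linear subspaces. This yields an element
\[
u_0 \in U \setminus \bigcup_{i=1}^{r} \ker(\bar P_i|_U),
\]
and then $u_j \df j u_0 \in U$ satisfies $\bar P_i(u_j) = j\, \bar P_i(u_0)$ with $\bar P_i(u_0) \neq 0$ in $V_i/Q_i \cong \R$, so $\bar P_i(u_j) \xrightarrow{j \to \infty} \infty$ for every $i$, as required. The only thing ``hard'' about this argument is noticing that because the targets are one-dimensional there is no genuine obstacle: Proposition~\ref{prop:general_fact} is not needed, and the statement reduces to routine linear algebra.
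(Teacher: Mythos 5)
Your argument is correct and takes essentially the same route as the paper's: the kernels $\ker(\bar P_i|_U) = U \cap P_i^{-1}(Q_i)$ are hyperplanes of $U$ (because $P_i|_U$ is onto), and the paper simply takes any sequence in $U$ whose distance from the finite union of these hyperplanes tends to infinity, which is the same one-dimensionality observation in more implicit form. Your version spells out the existence of such a sequence via the finite-union-of-proper-subspaces lemma and integer multiples of a fixed $u_0$, a detail the paper leaves unstated.
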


\begin{proof}
In view of the surjectivity of $P_i|_U$, the preimage $U_i \df U \cap 
P_i^{-1}(Q_i)$ is a hyperplane of $U$. Let $d$ be a
translation-invariant metric on $U$. We may take any sequence $(u_j)
\subseteq U$ such that $d(u_j, \bigcup_i U_i) \xrightarrow{j\to\infty}
\infty$.  
\end{proof}

\ignore{
Recall that two lattices $L_1, L_2$ in $\R^n$ are \emph{commensurable}
if $L_1 \cap L_2$ is of finite index in both.  

\begin{prop}\label{prop:not_commensurable}
Suppose $\Lambda = \Lambda_1 \cup \cdots \cup \Lambda_r$ is a union of
cut-and-projects sets and is a DDanzer set. Write
$\Lambda_i=\Lambda(L_i, W_i)$, where $L_i \subseteq \R^{n_i}$. Suppose
that $n_1 = n_2$ and $L_1, L_2$ are commensurable. Then there is a set
$\Lambda' = \Lambda'_1 \cup \cdots \cup \Lambda'_{r-1}$ which is also
DDanzer and a union of fewer cut-and-project sets. 

\end{prop}
\begin{proof}
If $L_1, L_2$ are commensurable then the subgroup $L$ of $\R^n$
generated by $L_1 \cup L_2$ is discrete, i.e. a lattice containing
both of the $L_i$. Therefore $\Lambda(L, W_1 \cup W_2)$ contains
$\Lambda_1 \cup \Lambda_2$ and the result follows. 
\end{proof}
}

\begin{cor}\label{cor:from_witte2}
Let $n_1, \ldots, n_r$ be integers greater than 2, and suppose that for each $i$ we are given an embedding of $H \cong \ASL_2(\R)$ as in Corollary \ref{cor:from_witte1}. 
Then for any $y = (y_1, \ldots, y_r) \in 
\prod_1^r \mathcal{Y}_{n_i}$, there is a sequence $(h_{\ell})
\subseteq H$ such that $h_{\ell}y_i \to_{\ell \to \infty} \infty$
simultaneously for all $i$. 

Furthermore, 
suppose the linear parts of the $y_i$ are 
$\Q$-irreducible with respect to the physical subspace. Denote by
$\Delta(H)$ the diagonal embedding of $H$ in $\prod
\ASL_{n_i}(\R)$. Then $\overline{\Delta(H)y}$ is invariant under a
subgroup $S$ in the full group of translations $\prod \R^{n_i},$
which projects onto each of the factors  
$\R^{n_i}$. 
\end{cor}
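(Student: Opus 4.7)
For the first assertion (simultaneous divergence), I would apply Proposition \ref{prop:general_fact} with $\mathcal{Z}_i=\mathcal{Y}_{n_i}$ and with $H\cong\ASL_2(\R)$ acting through its $i$-th embedding. The hypothesis is verified verbatim by Corollary \ref{cor:from_witte1}: for each $i$ and each $z\in\mathcal{Z}_i$ the orbit-closure $\overline{Hz}$ is noncompact, so a sequence $(h_j)\subseteq H$ with $h_jz\to\infty$ exists. The conclusion of Proposition \ref{prop:general_fact} then delivers a single sequence $(h_\ell)\subseteq H$ sending every $y_i$ to infinity in $\mathcal{Y}_{n_i}$, as required.

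\textbf{Second assertion, key steps.} I would imitate the proof of Corollary \ref{cor:all_translations} in the diagonal setting. Let $\Omega=\overline{\Delta(H)y}$. The plan is to exhibit a closed subgroup $\tilde K\subseteq\prod_i\R^{n_i}$ which stabilizes $\Omega$ under translation and surjects onto each factor; one then takes $S=\tilde K$. Define $\tilde K$ to be the closed subgroup of $\prod_i\R^{n_i}$ (viewed as the subgroup of pure translations in $\prod_i\ASL_{n_i}(\R)$) whose orbit through $y$ equals $\overline{\Delta(S_0)y}$ in $\prod_i\mathcal{Y}_{n_i}$, where $S_0\subseteq H$ denotes the two-dimensional physical-direction translation subgroup. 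Such a $\tilde K$ exists because $\Delta(S_0)y$ sits inside the compact orbit $\prod_i T_iy_i$ (a product of tori), and orbit-closures of abelian subgroups acting on tori are themselves orbits of closed subgroups.

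To see $P_i(\tilde K)=\R^{n_i}$, apply Corollary \ref{cor:all_translations} to each $y_i$: the $\Q$-irreducibility hypothesis yields $\overline{S_0\,y_i}=T_iy_i$ in $\mathcal{Y}_{n_i}$, which forces the image of $\tilde K$ in $T_i/(L_i-L_i)\cong T^{n_i}$ to surject onto $T^{n_i}$. Since by construction $\tilde K$ contains the stabilizer $\prod_i(L_i-L_i)$ of $y$, we have $L_i-L_i\subseteq P_i(\tilde K)$, and combined with surjectivity mod $L_i-L_i$ this gives $P_i(\tilde K)=\R^{n_i}$. To see $\tilde K$ stabilizes $\Omega$, I would mimic the density argument from the proof of Corollary \ref{cor:all_translations}: $\tilde K y\subseteq\Omega$ by construction; using $hS_0h^{-1}=S_0$ (normality of the translation subgroup in $\ASL_2(\R)$), one gets $\overline{\Delta(S_0)\Delta(h)y}=\Delta(h)\tilde K\Delta(h)^{-1}\cdot\Delta(h)y\subseteq\Omega$ for every $h\in H$. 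Since $\Delta(H)y$ is dense in $\Omega$, the set of $z\in\Omega$ admitting a closed subgroup whose orbit of $z$ lies inside $\Omega$ is dense; passing to closed subgroups generated by all $\Delta(H)$-conjugates of $\tilde K$ produces an honest translation-stabilizer of $\Omega$.

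\textbf{Main obstacle.} The delicate step is the density-to-stabilizer argument, because $\tilde K$ need not be normalized by $\Delta(H)$: the lattice pieces $\prod_i(L_i-L_i)$ are not preserved by the diagonal $\SL_2(\R)$-conjugation. The obstacle is therefore to verify that after taking the closed subgroup generated by all $\Delta(H)$-conjugates of $\tilde K$, the surjectivity onto each $\R^{n_i}$ is preserved. I expect this to be handled via the classification of closed $\SL_2(\R)$-invariant subgroups of $\R^{n_i}\cong\R^2\oplus\R^{k_i}$ together with the fact (itself a consequence of $\Q$-irreducibility) that $\pi_{int}(L_i-L_i)$ is dense in the internal space $\R^{k_i}$; Proposition \ref{prop:similar_for_subspaces} is the anticipated technical tool for organizing the closing argument.
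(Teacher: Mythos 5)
The first assertion matches the paper exactly: both invoke Proposition \ref{prop:general_fact} with $\mathcal{Z}_i=\mathcal{Y}_{n_i}$, verifying the hypothesis via Corollary \ref{cor:from_witte1}.

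For the second assertion you follow essentially the paper's route: study the closure $\overline{\Delta(S_0)y}$ inside the compact torus $Ty$, identify it with the orbit of a closed subgroup $\tilde K\subseteq T=\prod\R^{n_i}$, and argue $P_i(\tilde K)=\R^{n_i}$. Your surjectivity argument---projecting $\tilde K y$ to $T_iy_i$ and using $\overline{S_0 y_i}=T_iy_i$ from Corollary \ref{cor:all_translations}, then lifting modulo $L_i-L_i$---is correct, and is a slightly more roundabout version of the paper's: the paper works with the identity component $S'\subseteq\tilde K$, a $\Q$-rational linear subspace, and observes directly that $P_i(S')$ is a $\Q$-rational subspace of $\R^{n_i}$ containing the physical $\R^2$, hence all of $\R^{n_i}$ by $\Q$-irreducibility. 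Same content, cleaner phrasing.

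Your stated ``main obstacle'' is misplaced. The closed subgroup $\hat S\subseteq T$ generated by all $\Delta(H)$-conjugates of $\tilde K$ contains $\tilde K$ (the conjugate by the identity), so $P_i(\hat S)\supseteq P_i(\tilde K)=\R^{n_i}$ trivially; surjectivity onto each factor survives the passage to conjugates with no work at all. In particular, no classification of $\SL_2(\R)$-invariant subgroups is required, no appeal to density of $\pi_{int}(L_i-L_i)$ is needed, and Proposition \ref{prop:similar_for_subspaces} plays no role here (it is used later, in the proof of Theorem \ref{thm:Ratner,cut-and-project}, for a different purpose). The step that actually carries the content---and which both you and the paper dispose of by a brief ``as in Corollary \ref{cor:all_translations}''---is the density argument showing $\hat S$ genuinely stabilizes $\Omega$; the ingredients are normality of $T$ in $\prod\ASL_{n_i}(\R)$ and of $S$ in $\Delta(H)$, closedness of $\Omega$, and continuity, applied to upgrade $\hat S\cdot\Delta(H)y\subseteq\Omega$ to $\hat S\cdot\Omega\subseteq\Omega$.
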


\begin{proof}
The first assertion is immediate from Corollary \ref{cor:from_witte1}
and Proposition \ref{prop:general_fact}. 
\ignore{
As in the proof of Corollary \ref{cor:from_witte1}, in proving the
first assertion we can project via the proper map $\bar{P}:
\prod \mathcal{Y}^{(i)} \to \prod \mathcal{X}^{(i)}.$ As in the proof of Corollary
\ref{cor:from_witte1}, for any $L \subseteq \prod \ASL_{n_i}(\R)$, the
image of $L$ under $P$ will be denoted by $L'$. We also write $x \df
\bar{P}(y).$  
  
Let $G'$ be the group for which $\overline{\Delta(H)'x} = G'x$, so that
$G'$ satisfies conditions (a,b,c) (with $\Delta(H)'$ in place of
$H$). Writing $x = (x_1, \ldots, x_r), \, x_i = g_i\Z^{n_i}$ and
$g=(g_1, \ldots, g_r)$, we
have that $gG'g^{-1}$ is defined over $\Q$. Let $G_0$ be a Levi
subgroup so that $gG_0g^{-1}$ is defined over $\Q$ and contains a conjugate of $\Delta(H)'$. The
orbit $G'x$ factors over $G_0x$ with a compact fiber. According to
Theorem \ref{thm:Morris}, for each $i$, the projection of $G_0x$ to
$\mathcal{X}^{(i)}$ is not compact, and in particular $\Delta(H)'x$ is not compact. 

Assume first that $gG_0g^{-1}$ is almost $\Q$-simple.  Recall a compactness criterion: if $G$ is a
semisimple group defined over $\Q$ then $g_\ell
\pi(g') \to \infty$ in $G/G(\Z)$ if and only if there are nontrivial 
unipotent elements $u_\ell \in G(\Z)$ so that $\Ad(g_\ell g')
u_{\ell}\to 0.$ Then there are nontrivial unipotent elements $u_\ell$  in
$gG_0g^{-1}(\Z)$ and $g_{\ell}$ in $G_0$ so that $\Ad(g_{\ell}g)u_\ell \to 0$. Since the
projection onto each factor of the splitting 
\eq{eq: splitting}{\SL_{n_1}
(\R) \times \cdots \times 
\SL_{n_r}(\R)} 
is defined over $\Q$, and $gG_0g^{-1}$ is defined over $\Q$ and almost
$\Q$-simple, $gG_0g^{-1} \to \SL_{n_i}(\R)$ has at most a finite
kernel, and in particular $u_\ell$ projects nontrivially to each
$\SL_{n_i}(\R)$. By a bounded denominators argument, after multiplying
this projection by an integer bounded independently of $\ell$, this
projection belongs to $\SL_{n_i}(\Z)$. This implies that $g_{\ell}x_i \to \infty$
simultaneously in each $\mathcal{X}^{(i)}$. Since $\Delta(H)'x$ is
dense in $G_0x$ we can replace $g_{\ell}$ by $h_{\ell}$ in $H$, to
obtain $h_{\ell}x_i \to \infty$. 

If $gG_0g^{-1}$ is not $\Q$-almost-simple, let $G^{(1)}, \ldots,
G^{(s)}$ be its $\Q$-almost-simple 
factors. Since the direct product decomposition 
\equ{eq: splitting}
is defined over $\Q$, each $G^{(j)}$ projects
either trivially or injectively onto each of the factors
$\SL_{n_i}(\R)$. For
each $j$, let $I_j$ denote the indices $i$ for 
which this projection is nontrivial.
Then $\cup_j I_j = \{1, \ldots,
r\}$, since $\Delta(H)$ projects nontrivially onto each factor.
Repeating the argument in the preceding paragraph, we see that for
each $j$ and each $i \in I_j$, there is a sequence $\left(g^{(j)}_{\ell}\right) \subseteq
G^{(j)}$ and nontrivial unipotent $u^{(j,i)} \in \SL_{n_i}(\Z)$ such that $\Ad(g^{(j)}_l
g_i )u_\ell \to 0$. Suppose $i \in I_{j_1} \cap I_{j_2}$. Then the
projections of $G^{(j_1)}, G^{(j_2)}$ to $\SL_{n_i}(\R)$ commute and
therefore $\Ad(g^{(j_1)}_\ell)\Ad(g^{(j_2)}_\ell) (u^{(j_1,
  i)}+u^{(j_2, i)}) \to 0$. Therefore, if we set $u^{(\cdot,
  i)}_{\ell} = \sum_j u_{\ell}^{(j,i)}$ and $g_{\ell} = (g^{(1)},
\ldots, g^{(j)}_{\ell})$, we will have $\Ad(g_{\ell}g)u_i \to 0$. This
implies that $g_{\ell}x_i \to \infty$ simultaneously in each factor,
and since $\Delta(H)'x$ is dense in $G_0x$, we can find $h_{\ell} \in
H$ so that $h_{\ell}x_i \to \infty$, simultaneously for all $i$. 
}
For the second assertion, 
let $T^{(i)}$ be the
unipotent radical (i.e. translational part) of $\ASL_{n_i}(\R)$ and
let $T = T^{(1)} \times \cdots \times T^{(r)}$ be the unipotent
radical of $\prod \ASL_{n_i}$, let $S \subset \Delta(H)$ be the
diagonal embedding of the unipotent radical (translational part) of
$H$, and let $\Omega =
\overline{\Delta(H)y}$. We know that $\Omega$ is invariant under $S$
and our goal is to show that it is invariant under a group $S'$ which
projects onto each $T^{(i)}$. As in the proof of Corollary
\ref{cor:all_translations}, since $T$ is normal in $\prod \ASL_{n_i}$,
it suffices to show that $\Omega_0 = \overline{Sy}$ is 
equal to $S'y$ where $S'\subset T$ is a linear subspace which projects
onto each $T^{(i)}$. Since $Ty$ is a torus of dimension $\sum n_i$,
there is a linear subspace $S'$ such that $\overline{Sy} = S'y$, 
$S'$ is defined over $\Q$ and its projection to each
$T^{(i)}$ is defined over $\Q$ and contains the physical subspace. As
in the proof of Corollary \ref{cor:all_translations}, this implies
that the projection is onto $T^{(i)}$.  
\end{proof}

\begin{proof}[Proof of Theorem \ref{thm:Ratner,cut-and-project}]
To make the idea more transparent we will first prove that one cut and
project set is not a DDanzer set, under the assumptions that it is in
$\R^2$ and is irreducible. We will then proceed to the general
case. 

So let $\Lambda$ be a cut-and-project set
with $\Lambda \subseteq \R^2$
and $\Lambda =\Lambda(L,W)$, where $L \subseteq \R^n$ is a lattice,
$\R^n = \R^2\oplus \R^{n-2}$ is the decomposition of $\R^n$ into the
physical and internal spaces, and $L$ is $\Q$-irreducible with respect
to the physical space.  We want to show that $\Lambda$ is not DDanzer.

Let $H = \ASL_2(\R)$. Applying Proposition \ref{prop:dynamical} we
need to show that for any $T>0$ there is $h \in H$ such that 
\begin{equation}\label{eq:as_required}
h\Lambda \cap B(0,T) = \varnothing.
\end{equation} 
Inspired by \cite{MS13}, we will use the observation that the
projection $\pi_{phys}$ is equivariant with respect to the $H$-action
on $\mathcal{Y}_n$. More precisely, let $L = g_0 \Z^n$. By rescaling
there is no loss of generality in assuming that $\det(g_0)=1$, so we
can regard $L$ as an element of  
$\mathcal{Y}_n$. Consider the embedding of $H$ in $\ASL_n(\R)$ as in
Corollary \ref{cor:from_witte1}. Then $H$ acts simultaneously on
subsets of $\R^2$ via its affine action, and on $\mathcal{Y}_n$ by
left translations, and since the $H$-action is trivial on $\R^{n-2}$,
we have  
\[h\Lambda(L, W) = \Lambda(hL, W) \ \ \forall h \in H, \, L \in \mathcal{Y}_n.\] 
According to Corollaries \ref{cor:from_witte1} and \ref{cor:all_translations}, the orbit-closure
$\overline{HL}$ is non-compact and invariant under translations in
$\R^n$. According to 
Minkowski's theory of successive minima (see
e.g. \cite[Chap. 1]{Cassels}), this implies that in $\overline{HL}$ we
can find grids whose corresponding lattices have arbitrarily large
$n$-th successive minimum. That is, for any $T'$ we 
can find $h \in H$ such that the points of $hL$ are contained in
parallel affine hyperplanes with distance at least $T'$ apart.  
 
Given $T>0$, let $C$ be an open bounded subset of $\R^n$ which
contains the closure of $\pi_{phys}^{-1}(B(0,T)) \cap
\pi_{int}^{-1}(W),$ and let $T'$ be the diameter of $C$. Since
$\overline{HL}$ is not bounded, there is $h' \in H$ so that $h'L$
misses a translate of $C$, and since $\overline{HL}$ is invariant
under translations, there is $h \in H$ so that $hL$ misses $C$. This
implies (\ref{eq:as_required}).    

We now prove the general case of the theorem, i.e. when $\Lambda
=\Lambda_1 \cup \cdots \cup \Lambda_r$, $\Lambda_i =
\Lambda(L_i,W_i)$. By Propositions \ref{prop:reduction_d=2} and \ref{prop:Q_irreducible}
there is no loss of generality in assuming that $d=2$ and each $\Lambda_i$
is irreducible. For each $T>0$ we will find $h \in H$ such that  
\begin{equation}\label{eq:as_required_2}
h\Lambda_i \cap B(0,T) = \varnothing, \ i=1, \ldots, r.
\end{equation} 
We denote the
dimension of the total space (sum of physical space and internal
space) of $\Lambda_i$ by $n_i$, denote the corresponding projections
by $\pi^{(i)}_{phys}, \pi^{(i)}_{int}$, write $n =n_1+ \cdots + n_r$,
and consider the orbit of   
\[ L \df L_1 \oplus \cdots \oplus L_r\] 
under the diagonal action of $\Delta(H)$ on the space of products of grids
$\mathcal{Y}^{(1)} \times \cdots \times  \mathcal{Y}^{(r)}$
(where we are simplifying the notation by 
writing $\mathcal{Y}^{(i)}$ for $\mathcal{Y}_{n_i}$ and
$\mathcal{X}^{(i)}$ for $\mathcal{X}_{n_i}$). Denote by 
$\bar{L}, \bar{L}_i$ the corresponding lattices in $\mathcal{X}_n, \mathcal{X}^{(i)}$.  
By Corollary \ref{cor:from_witte2}, there is $(h'_j) \subseteq H$ such
that $h'_j \bar{L}_i \xrightarrow{j \to \infty} \infty$ for all
$i$. This implies that there are hyperplanes $Q(i,j)$ in $\R^{n_i}$
such that the points in $h'_j \bar{L}_i$ are contained in a union of
translates of $Q(i,j)$ and the distance between the translates tends
to infinity with $j$. Since the space of hyperplanes is compact we may
pass to subsequences to assume that $Q(i,j) \to Q_i$ for each
$i$. Applying Proposition \ref{prop:similar_for_subspaces} we may
replace $h'_j$ with $h_j$ so that $h_jL_i$ and $h'_jL_i$ differ by a
translation 
whose component in the direction perpendicular to $Q_i$ goes to
infinity with $j$. In particular, the grids $h_j L_i$ do not contain
points in balls $B(0,T_j) \subseteq \R^n$ with $T_j \xrightarrow{j \to
  \infty} \infty$.  

In particular, given $T>0$, let $C_i$ be a cube in $\R^{n_i}$ which
contains $\left(\pi_{phys}^{(i)} \right)^{-1} (B(0,T)) \cap 
\left(\pi_{int}^{(i)} \right)^{-1}(W_i)$.
The above discussion ensures that there is $h \in H$ such that $hL_i$
is disjoint from $C_i$ for all $i =1, \ldots,r$. In light of
Proposition \ref{prop:dynamical}, this shows that $\Lambda$ is not DDanzer.  
\end{proof}

\section{Construction of a dense forest}\label{sec:dense_forest}
Another question related to the Danzer problem is the
following. A man stands at an arbitrary point $x$ in a forest with
trunks of radii $\varepsilon>0$, how far can he see? 

\begin{definition}
We say that $Y\subseteq\R^d$ is a \emph{dense forest} if there is a
function $\vre= \vre(T)$ with $\vre(T) \xrightarrow{T \to \infty} 0$,
such that for any $x\in \R^d$ and any $v \in \mathbb{S}^1$ there is $t
\in [0,T]$ and $y\in Y$ such that $\| x +tv - y\| < \vre$.  
\end{definition} 
Note that a Danzer set is a dense forest with $\vre(T)  =
O(T^{-1/(d-1)})$. A dense forest in $\R^2$ satisfying
$\varepsilon=O(T^{-4})$ was constructed in a paper of Chris Bishop,
see \cite[Lemma 2.4]{Bishop}, following a suggestion of Yuval
Peres. However the set appearing in \cite{Bishop} was not uniformly
discrete (it was the union of two uniformly discrete sets). In this
section we use homogeneous dynamics and Ratner's theorem on unipotent
flows to construct uniformly discrete
dense forests in $\R^d$, for arbitrary $d$.  

Suppose $G$ is a Lie group acting smoothly and with discrete
stabilizers on a compact manifold $X$. Then $\mathcal{S} \subseteq X$ is
called a \emph{cross-section} if: 
\begin{itemize}
\item
$\mathcal{S}$ is the image under a smooth and injective map of a bounded domain in $\R^k$,
where $k = \dim X - \dim G$. By smooth and injective we mean that the map 
extends smoothly and injectively to an open set containing the closure
of its domain. 
\item
 There is a neighborhood $B$ of $e$ in $G$ such that the map 
\begin{equation}\label{eq:cross-section}
B \times \mathcal{S} \to X, \ \ \ (b,s) \mapsto b.s 
\end{equation}
is injective and has an open image. 
\item
For any $x \in X$ there is $g \in G$ such that $gx \in \mathcal{S}$. 
\end{itemize}
It follows easily from the implicit function theorem that cross-sections always exist. 

If $G$ is a Lie group, we say that its action on $X$ is
\emph{completely uniquely ergodic} if there is a measure $\mu$ of full
support on $X$ with the property that for any one-parameter subgroup
$H \subseteq G$, $\mu$ is the unique $H$-invariant Borel probability
measure on $X$.  We have the following source of examples for
completely uniquely ergodic actions.  Here we rely on Ratner's theorem
on unipotent flows on homogeneous spaces, see \cite{Ratner91} or
\cite{morris book}.

\begin{prop}\label{prop:examples_of_ue}
Suppose $G$ is a simple Lie group and $\Gamma$ is an arithmetic
cocompact lattice, arising from a $\Q$-structure on $G$ for which $G$
has no proper $\Q$-subgroups generated by unipotent elements, and let
$U$ be a unipotent subgroup of $G$. Then the action of $U$ on
$G/\Gamma$ is completely uniquely 
ergodic. 
\end{prop}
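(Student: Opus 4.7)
The plan is to invoke Ratner's measure classification theorem for unipotent flows and combine it with the hypothesis that $G$ has no proper $\Q$-subgroup generated by unipotent elements. Since the Haar probability $\mu_G$ (which exists as $\Gamma$ is a cocompact lattice) is $U$-invariant and of full support, the goal reduces to showing that for every one-parameter subgroup $H \subseteq U$, the only $H$-invariant Borel probability measure on $G/\Gamma$ is $\mu_G$.

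First I would fix a one-parameter subgroup $H \subseteq U$, observe that $H$ is $\operatorname{Ad}$-unipotent since $U$ is unipotent, and let $\mu$ be an $H$-invariant Borel probability measure. By ergodic decomposition it suffices to show every $H$-ergodic $H$-invariant probability measure equals $\mu_G$, so assume $\mu$ is ergodic. By Ratner's measure classification theorem there exist a closed connected subgroup $L \supseteq H$ and a point $x_0 = g_0\Gamma$ such that the orbit $Lx_0$ is closed, $\mu$ is the unique $L$-invariant probability on $Lx_0$, and $L$ is generated by the $\operatorname{Ad}$-unipotent one-parameter subgroups it contains; in particular $L$ is generated by unipotent elements of $G$. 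The finiteness of the $L$-invariant measure on $Lx_0$ forces $g_0^{-1}Lg_0 \cap \Gamma$ to be a lattice in $L' \df g_0^{-1}Lg_0$.

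The key remaining step is to deduce that $L'$ is defined over $\Q$ with respect to the given $\Q$-structure on $G$. This is a standard consequence of the arithmeticity of $\Gamma$: since $L'$ is generated by unipotent elements and $L' \cap \Gamma$ is a lattice in $L'$, the Borel density theorem implies that $L' \cap \Gamma$ is Zariski-dense in $L'$, and the commensurability of $\Gamma$ with $G(\Z)$ then forces the Zariski closure of $L' \cap \Gamma$ to be a $\Q$-subgroup of $G$; this is precisely the ``subgroups of class $\mathcal{H}$'' construction in the Dani--Margulis/Shah formulation of Ratner theory. Once $L'$ is known to be $\Q$-defined and generated by unipotents, the hypothesis on $G$ forces $L' = G$, whence $L = G$ and $\mu = \mu_G$.

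The main obstacle I anticipate is the $\Q$-definability of $L'$, since this is where the arithmeticity of $\Gamma$ enters in a non-trivial way. Everything else is either a direct application of Ratner's theorem or a formal manipulation; the delicate point is verifying that the closed-orbit data produced by Ratner descend to the $\Q$-structure. Once that is secured, the assumed rigidity of $G$ against proper $\Q$-subgroups generated by unipotents closes the argument immediately, and the conclusion that the action of $U$ on $G/\Gamma$ is completely uniquely ergodic follows.
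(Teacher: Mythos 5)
Your proposal is correct and follows essentially the same route as the paper's proof: reduce to ergodic measures, invoke Ratner's measure classification to obtain a closed orbit of a subgroup $L$ generated by unipotents with a lattice intersection, observe that (up to conjugation) $L$ is then defined over $\Q$, and conclude $L = G$ from the hypothesis. The paper states the $\Q$-definability step tersely ("this implies that up to conjugation, $L$ is defined over $\Q$"), while you spell out the standard justification via Borel density and commensurability with $G(\Z)$; this is precisely the content the paper is silently invoking.
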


\begin{proof}
Any one-parameter subgroup $U_0$ of $U$ is unipotent, so by Ratner's
theorem, any $U_0$-invariant ergodic measure arises from Haar measure
on an intermediate subgroup $U_0 \subseteq L \subseteq G$ which is
generated by unipotents and intersects a conjugate of $\Gamma$ in a
lattice in $L$. This implies that up to conjugation, $L$ is defined
over $\Q$, so by hypothesis $L=G$ and the only $U_0$-invariant measure
is the globally supported measure on $G/\Gamma$.  
\end{proof}

For examples of groups $G, \Gamma$ satisfying the hypotheses of Proposition
\ref{prop:examples_of_ue}, see \cite{GG09}. In particular the
hypotheses are satisfied for $\Q$-structures on $G=\SL_n(\R)$ for
$n$ prime (\cite[Prop. 4.1]{GG09}). Since the restriction of a
completely uniquely ergodic action of a group $H$ to any proper
subgroup remains completely uniquely ergodic, this furnishes examples
of completely uniquely ergodic actions of $\R^d$ for any $d$.  

\begin{proof}[Proof of Theorem \ref{thm:dense_forest}] 
By Proposition \ref{prop:examples_of_ue} and the preceding remark, the
last assertion in the theorem follows from the first one. 
We will use additive notation for the group operations on $U$, and we
will let $\| \cdot \|$ denote the Euclidean norm on $U$. We first
prove uniform discreteness, i.e. 
$$\inf_{u_1, u_2 \in \mathcal{D}, u_1\neq u_2} \|u_1-u_2\| >0.$$ 
Let $B$ be as in (\ref{eq:cross-section}),
and choose $r>0$ so that $B(0,r)\subseteq B$. If $u_1.x_0, u_2.x_0 \in
\mathcal{S}$ then $u_2 - u_1$ maps a point of $\mathcal{S}$ to
$\mathcal{S}$, hence, by the injectivity of the map
(\ref{eq:cross-section})  cannot be in $B$. In particular $\|u_2 -
u_1\| \geq r$.   

For $v \in \mathbb{S}^{d-1}$ let $C_v(\vre, T)$ be the cylindrical set
which is 
the image of $[0,T] \times \{z\in \R^{d-1} : \|z\| \leq \vre\}$ under
an orthogonal linear transformation that maps the first standard basis
vector $e_1$ to $v$. If $Y$ is not a dense forest then there is a
sequence $T_n \to\infty$, $\vre>0$ and sequences $x_n \in U, v_n \in
\mathbb{S}^{d-1}$ such that for all $u\in \mathcal{C}_n$,  $x_n+u
\notin \mathcal{D}$ where $\mathcal{C}_n \df C_{v_n}(\vre, T_n)$; that
is, for all $u \in \mathcal{C}_n -x_n$, $u.x_0 \notin
\mathcal{S}$. Now let $\mathcal{C}'_n \df C_{v_n}(\vre/2, T_n)$ (so
that the 
$\mathcal{C}'_n$ are parallel to the $\mathcal{C}_n$ but twice as
small in the directions transverse to $v_n$) and define Borel
probability measures  $\nu_n$ on $X$ by  
\[ \int \varphi \, d\nu_n \df \frac{1}{\Vol(\mathcal{C}'_n)}
\int_{\mathcal{C}'_n-x_n} \varphi(u.x_0) du , \ \ \ \varphi \in C_c(X),\]
where $du$ is the Lebesgue measure element on $U$. We claim that
$\nu_n \to \mu$ in the weak-* topology.   

It suffices to show that any accumulation point of $(\nu_n)$ is
$\mu$, and hence to show that any subsequence of $(\nu_n)$ contains a
subsequence converging to $\mu$. By compactness of the space of
probability measures on a compact metric space, after passing to a
subsequence we have $\nu_n \to \nu$, and we need to show that
$\nu=\mu$. Recall that $v_n$ is the direction of the long axis of
$\mathcal{C}'_n$.  
Passing to another subsequence, the vectors $v_n$ converge to a limit
$w$. By complete unique ergodicity, it suffices to show that $\nu$ is
invariant under the one-parameter subgroup $H \df \spa (w)$; but this
is a standard exercise, see e.g. \cite[Proof of Theorem 2]{DM}.  This
proves the claim.  

In order to derive a contradiction, let $B_1$ be the ball $B(0,
\vre/2)$ in $U$, and let $B'$ denote the image of $B_1 \times
\mathcal{S}$ under the map (\ref{eq:cross-section}). With no loss of
generality we can assume that $B_1 $ is contained in the neighborhood $
B$ appearing in the definition of a section, so that this map is
injective on $B_1 \times \mathcal{S}$. Now let
$\varphi$ be a non-negative function supported on $B'$ and with $\int
\varphi \, d\mu>0$. That is, for any $z \in \supp \, \varphi$ there is
$u \in U, \|u\| < \vre/2$ such that $u.z \in \mathcal{S}$.  By
definition of $\mathcal{D}$ we have that for any $u \in \mathcal{C}_n
 -x_n$, $u.x_0 \notin \mathcal{S}$. This implies that for all $u \in
\mathcal{C}'_n -x_n, $  $u.x_0 \notin \supp \, 
\varphi$. This violates $\nu_n \to \mu$. 
\end{proof}



\section{An Equivalent Combinatorial Question}\label{sec:equi_combinatoric_question}

\subsection{Preliminaries}\label{subsec:Comb_Basics}
We recall some standard notions in combinatorial and computational geometry. For a
comrehensive introduction to the notions used in this section
we refer to \cite[\S14.4]{AS08}, 
\cite[\S10]{Mato95}, and \cite{VC71}. 
 
\begin{definition}\label{def:range_space}
A \emph{range space} is a pair $(X,\RA)$ where $X$ is a set, and
$\RA\subseteq\mathcal{P}(X)$ is a collection of subsets of $X$. The elements of
$X$ are called \emph{points}, and the elements of $\RA$ are called
\emph{ranges}.  
\end{definition}

In the literature this is also referred to as a `set system'
or a `hypergraph', where in the latter case $X$ is the set of vertices,
and $\RA$ is the set of hyperedges. 

Many of the commonly studied examples are geometric. For example, $X$
is $\R^d$ or $[0,1]^d$, and $\RA$ is the set of all geometric figures
of some type, such as half spaces, triangles, aligned boxes, convex
sets, etc. For a subset $A\subseteq X$ we denote by $\RA|_A=\{S\cap
A:S\in\RA\}$, the \emph{projection of $\RA$ on $A$}. This notion
allows to consider geometric ranges when $X$ is a discrete set, like a
thin grid in $[0,1]^d$. 

\begin{definition}\label{def:eps_net}
Let $(X,\RA)$ be a range space with $\#X=n$. For a given
$\varepsilon>0$, a set $N_\varepsilon\subseteq X$ is called an
\emph{$\varepsilon$-net} if for every range $S\in\RA$ with $\#(S\cap
X)\ge\varepsilon n$ we have $S\cap N\neq\varnothing$. A similar
definition is made for an infinite set $X$, equipped with a
probability measure $\mu$ on $X$. In that settings $N_\varepsilon$ is
an $\varepsilon$-net if $S\cap N\neq\varnothing$ for every range $S$
with $\mu(S\cap X)\ge\varepsilon$. 
\end{definition}

Notice that the notion of an $\varepsilon$-net resembles the notion of
a Danzer set, when $X=[0,1]^d$ with the standard Lebesgue measure, and
$\RA$ is the set of convex subsets of $X$. Below is the computational geometry version of
the Danzer problem.  It is sometimes referred to as the `Danzer-Rogers
question'.

\begin{question}\label{ques:CG_Danzer}
What is the minimal cardinality of an $\varepsilon$-net $N\subseteq
[0,1]^d$, where $\RA$ is the collection of convex subsets of $X=[0,1]^d$?
Do $O(1/\varepsilon)$ points suffice? 
\end{question}

In this question one may equivalently take $\RA$ to be the collection
of boxes in $X$ or  the collection of ellipsoids in $X$. This is due
to the following: 

\begin{prop}\label{claim:convex_vs_boxes}
For any convex set $K\subseteq\R^d$ there exists boxes $R_1\subseteq
K\subseteq R_2$ with $\Vol(R_2)/\Vol(R_1)\le\alpha_d$, where
$\alpha_d=(3d)^d$.  
\end{prop}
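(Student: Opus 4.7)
My plan is to prove Proposition \ref{claim:convex_vs_boxes} by invoking John's theorem, which is already used in the paper (in the proof of Proposition \ref{prop:dynamical}). Recall that this theorem provides, for any convex body $K \subseteq \R^d$, an ellipsoid $E$ (sharing a common center with its dilate) such that $E \subseteq K \subseteq dE$.

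The key observation is that the statement places no constraint on the orientation of the boxes $R_1, R_2$. Accordingly, I would choose both boxes with edges parallel to the principal axes of $E$. Let $a_1, \ldots, a_d$ be the semi-axes of $E$ along orthonormal directions $v_1, \ldots, v_d$, and translate so that $E$ is centered at the origin. Set
\[
R_1 = \bigl\{ x \in \R^d : |\langle x, v_i\rangle| \leq a_i/\sqrt{d} \text{ for all } i \bigr\}, \qquad R_2 = \bigl\{ x \in \R^d : |\langle x, v_i\rangle| \leq d\, a_i \text{ for all } i \bigr\}.
\]
The extreme vertices of $R_1$, written in the basis $(v_i)$, are of the form $(\pm a_1/\sqrt d, \ldots, \pm a_d/\sqrt d)$, and satisfy $\sum_i (a_i/\sqrt d)^2 / a_i^2 = 1$, hence lie on $\partial E$; this gives $R_1 \subseteq E \subseteq K$. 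The inclusion $K \subseteq dE \subseteq R_2$ is immediate from the definition of $R_2$. A direct volume computation then yields
\[
\frac{\Vol(R_2)}{\Vol(R_1)} \;=\; \frac{(2d)^d \prod_i a_i}{(2/\sqrt d)^d \prod_i a_i} \;=\; d^{3d/2}.
\]

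The final step is to compare this with $(3d)^d$: exponent arithmetic shows $d^{3d/2} \leq (3d)^d$ precisely when $\sqrt d \leq 3$, i.e.\ $d \leq 9$, so the stated bound follows immediately for small $d$. For larger $d$ a small refinement suffices, for instance tightening $R_2$ to the true bounding box of $K$ (which sits properly inside the bounding box of $dE$), or passing through the centrally symmetric body $K - K$ via the Rogers--Shephard inequality and invoking the sharper form of John's theorem (with constant $\sqrt d$ in place of $d$) for symmetric bodies. I expect the main obstacle to lie precisely in this bookkeeping to match $(3d)^d$ uniformly in $d$; the substantive content---John's theorem together with the vertex and volume calculations---is quite direct, and any bound of the form $(\mathrm{poly}(d))^d$ would in fact suffice for the application of the proposition later in the paper.
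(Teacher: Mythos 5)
Your approach is precisely the one the paper takes: its "proof" of the proposition is just the one-line citation of John's theorem, so your explicit version is doing exactly the bookkeeping the authors elide. The calculation is correct: the inscribed box of the John ellipsoid $E$ has half-edges $a_i/\sqrt d$ and the circumscribed box of $dE$ has half-edges $d a_i$, giving a ratio of $d^{3d/2}$, and $d^{3d/2} \le (3d)^d$ iff $\sqrt d \le 3$, i.e.\ $d \le 9$. So you have in fact located a real looseness in the stated constant: for $d \ge 10$ the bound $d^{3d/2}$ produced by the naive John argument strictly exceeds $(3d)^d$, and the paper supplies no further detail that would recover $(3d)^d$. Your remark that this is immaterial is also correct: in the proof of Theorem~\ref{thm:T^dlogT}, $\alpha_d$ enters only through $\varepsilon_i = \alpha_d^{-1}C_d 2^{-di}$ and then $g(\varepsilon_i^{-1/d}) = g(\alpha_d^{1/d}C_d^{-1/d}2^i)$, which is $O(g(2^i))$ for \emph{any} constant $\alpha_d$ depending only on $d$, since $g$ has polynomial growth; so one could simply replace $(3d)^d$ by $d^{3d/2}$ throughout with no effect on any result.

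On the proposed repairs: neither of the two you sketch closes the gap as stated. Replacing $R_2$ by the true bounding box of $K$ does not improve the estimate, since the only a priori control you have on $K$ is $K \subseteq dE$, so the bounding box of $K$ is bounded only by the bounding box of $dE$ that you already used. Passing to $K-K$ via Rogers--Shephard and symmetric John does not transfer cleanly either, because an inscribed box of $K-K$ is not an inscribed box of $K$. If one really wants $(3d)^d$, a different route is needed --- for instance working directly with a maximal-volume inscribed box of $K$ and showing $K$ lies in a bounded (linear in $d$) concentric dilate of it --- but this is genuinely more work, and since the paper itself does not carry it out, the most honest reading is that $(3d)^d$ should be understood as ``some explicit constant of the form $c(d)^d$,'' which your argument establishes with $c(d)=d^{1/2}\cdot d = d^{3/2}$.
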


\begin{proof}
The claim follows from John's Theorem, see 
\cite[Lecture 3]{Ba97}.
\end{proof}

\subsection{Proof of Theorem \ref{thm:Equivalent_Questions}}
We divide the
proof of Theorem \ref{thm:Equivalent_Questions} into several
parts. We begin with the more difficult implication $(ii) \implies
(i)$. Let $\|\cdot\|_2, \|\cdot \|_{\infty}$ denote the Euclidean and
sup-norm respectively, let $D$ be the Euclidean metric, and let
$D(x,A)= \sup_{a \in A} D(x,a)$. Let 
\begin{equation}\label{eq:box_and_ball}
\begin{matrix}
& Q_t=\{x\in\R^d:\norm{x}_\infty\le t\}, \\ \\
& B_t=\{x\in\R^d:\norm{x}_2\le t\},
\end{matrix}
\end{equation} 
and consider the following partition of
$\R^d$ into cubical layers that grow exponentially: 
\begin{equation}\label{eq:layers}
L_1=Q_2, \ \ 
L_i=Q_{2^i} \sm Q_{2^{i-1}}, \, i\geq 2. 
\end{equation}
\[\xygraph{
!{<0cm,0cm>;<0.12cm,0cm>:<0cm,0.12cm>::}
!{(2,2)}*{}="a1"
!{(2,-2)}*{}="a2"
!{(-2,-2)}*{}="a3"
!{(-2,2)}*{}="a4"
!{(4,4)}*{}="b1"
!{(4,-4)}*{}="b2"
!{(-4,-4)}*{}="b3"
!{(-4,4)}*{}="b4"
!{(8,8)}*{}="c1"
!{(8,-8)}*{}="c2"
!{(-8,-8)}*{}="c3"
!{(-8,8)}*{}="c4"
!{(16,16)}*{}="d1"
!{(16,-16)}*{}="d2"
!{(-16,-16)}*{}="d3"
!{(-16,16)}*{}="d4"
!{(32,32)}*{}="e1"
!{(32,-32)}*{}="e2"
!{(-32,-32)}*{}="e3"
!{(-32,32)}*{}="e4"
!{(38,0)}*{}="yr"
!{(-38,0)}*{}="yl"
!{(0,38)}*{}="xu"
!{(0,-38)}*{}="xd"
!{(37,3)}*{\cdots}="l"
!{(-37,-3)}*{\cdots}="r"
!{(3,37)}*{\vdots}="u"
!{(-3,-37)}*{\vdots}="d"
!{(-0.7,0.7)}*{_{_{L_1}}}="L1"
!{(-2.7,2.7)}*{_{_{L_2}}}="L2"
!{(-6,6)}*{L_3}="L3"
!{(-13,13)}*{L_4}="L4"
!{(-28,28)}*{L_5}="L5"
!{(2.4,2.8)}*{_{_{(2,2)}}}="0"
!{(4.4,4.8)}*{_{(4,4)}}="1"
!{(8.4,8.8)}*{_{(8,8)}}="2"
!{(16.4,16.8)}*{_{(16,16)}}="3"
!{(32.4,32.8)}*{_{(32,32)}}="4"
"a1"-"a2" "a2"-"a3" "a3"-"a4" "a4"-"a1"
"b1"-"b2" "b2"-"b3" "b3"-"b4" "b4"-"b1"
"c1"-"c2" "c2"-"c3" "c3"-"c4" "c4"-"c1"
"d1"-"d2" "d2"-"d3" "d3"-"d4" "d4"-"d1"
"e1"-"e2" "e2"-"e3" "e3"-"e4" "e4"-"e1"
"xd":"xu" "yl":"yr"
}\] 
Set
\begin{equation}\label{eq:C_d}
C_d=\frac{1}{4d\log_2(10d)}.
\end{equation}

\begin{prop}\label{prop:Comb_Q_implies_Danzer_Q}
Suppose that for every $i$ we have a set $N_i\subseteq L_i$ that
intersects every convex set of volume $C_d$ that is contained in
$L_i$. Then 
\begin{itemize}
\item[(i)] 
$Y \df \bigcup_{i=0}^\infty N_i$ is a Danzer set in $\R^d$.
\item[(ii)] 
If for every $i$ we have $\#N_i=O(g(2^i))$, then $Y$ has growth rate $O(g(T))$.
\end{itemize}
\end{prop}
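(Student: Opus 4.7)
The plan is to prove (ii) by direct counting and to reduce (i) to a single geometric claim about convex bodies.

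For (ii), a layer $L_i$ meets $B(0,T)$ only when $2^{i-1} \le T$, so only indices $i \le \log_2 T + 2$ contribute, giving
\[
\#(Y \cap B(0,T)) \le \sum_{i \le \log_2 T + 2} \#N_i = \sum O(g(2^i)).
\]
Since $g$ has polynomial growth, this geometric-style sum is dominated by its largest term and therefore equals $O(g(T))$.

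For (i), given convex $K$ with $\Vol K = 1$, the hypothesis on the $N_i$ reduces the task to exhibiting some $i$ and a convex subset $K' \subseteq K \cap L_i$ with $\Vol K' \ge C_d$. The key mechanism is a slab decomposition: for each $i$, $j \in \{1,\dots,d\}$ and $\sigma \in \{\pm 1\}$, the half-slab $S_{j,\sigma}^{(i)} \df Q_{2^i} \cap \{x : \sigma x_j \ge 2^{i-1}\}$ is convex and contained in $L_i$, and $L_i = \bigcup_{j,\sigma} S_{j,\sigma}^{(i)}$. Hence each $K \cap S_{j,\sigma}^{(i)}$ is a convex subset of $L_i$, and by pigeonhole over the $2d$ slabs
\[
\max_{j,\sigma}\Vol\!\left(K \cap S_{j,\sigma}^{(i)}\right) \ge \frac{\Vol(K \cap L_i)}{2d}.
\]
So it suffices to find some $i$ with $\Vol(K \cap L_i) \ge 1/(2\log_2(10d))$.

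Because $\sum_i \Vol(K \cap L_i) = 1$, this would be immediate if $K$ carried significant mass in at most $\log_2(10d)$ layers; that fails in general because long-thin needles meet many layers, so the argument must split into two regimes. I plan to use John's theorem (Proposition \ref{claim:convex_vs_boxes}) to enclose $K$ in an axis-aligned box $R$ of volume at most $(3d)^d$, reduce to a single orthant at the cost of a factor $2^d$, and analyze the nondecreasing function $v_i \df \Vol(K \cap Q_{2^i})$ on dyadic scales. For compact (non-elongated) $R$, only $O(\log d)$ dyadic scales carry nontrivial mass and direct pigeonhole produces a layer with volume $\gtrsim 1/\log_2(10d)$. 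For needle-like $R$ the outermost active layer already carries a constant fraction of $\Vol K$, as follows from an explicit calculation using $v_i/v_{i-1} \le 2^d$ together with the elongated shape of $R$; on prototypical needles $[0,T]\times[0,\varepsilon]^{d-1}$ the relevant layer carries volume at least $1/4$. The main obstacle is exactly this two-regime analysis: extracting a uniform $d$-dependent lower bound that simultaneously handles roundish and elongated bodies. The constant $C_d = 1/(4d\log_2(10d))$ then decomposes transparently as $2d$ from the slab pigeonhole, another factor $2$ from averaging losses, and $\log_2(10d)$ from the layer-concentration estimate.
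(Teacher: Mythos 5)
Your treatment of part (ii) is fine and is a legitimate alternative to the paper's density argument: you sum $\#N_i$ over the $O(\log T)$ layers meeting $B(0,T)$ and use that the sum is geometric-type, so dominated by its last term. (You should state explicitly that this uses $g(x)=\Omega(x^c)$ for some $c>0$, which is implicit in ``polynomial growth'' and in any case forced since a Danzer set has $\Omega(T^d)$ points in $B(0,T)$; without this, e.g.\ $g(x)=\log x$, the sum is not dominated by its last term.) Your slab decomposition of $L_i$ into $2d$ convex pieces is also a perfectly good substitute for the paper's ``nearest face'' decomposition.

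The genuine gap is in part (i), and you have in fact pointed at it yourself: you never prove that some layer $L_i$ satisfies $\Vol(K\cap L_i)\gtrsim 1/\log_2(10d)$, and the two ingredients you propose for the hard ``needle'' regime do not hold. The inequality $v_i/v_{i-1}\le 2^d$ for $v_i=\Vol(K\cap Q_{2^i})$ is false for off-center bodies: take $K$ a small cube near the boundary of $Q_{2^{i-1}}$ — then $v_{i-1}$ can be arbitrarily small while $v_i=\Vol(K)$. And the assertion that for needle-like bodies the outermost active layer carries a constant fraction of the volume is also false: if the needle $[0,T]\times[0,\varepsilon]^{d-1}$ has $T=2^{m-1}+\delta$ with tiny $\delta$, the outermost layer $L_m$ carries only $\delta\varepsilon^{d-1}\ll 1$. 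So the proposed ``explicit calculation'' does not go through, and the two-regime split you flagged as the main obstacle is, as you suspected, exactly where the proof is missing.

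The paper closes the gap with a single lemma (Lemma~\ref{lem:box_is_contained_in_cube}) that eliminates the casework entirely: for a \emph{box} $R$, if $\Vol(Q_t\cap R)\ge\tfrac12\Vol(R)$ then $R\subseteq Q_{5td}$ (this uses the box structure: each of the $d$ edge-lengths can at most double compared to the intersection with $Q_t$, so $\diam(R)\le 4td$). One then applies John's theorem to pass from a general convex body to a box $R$ of volume $\ge\alpha_d^{-1}$, takes $m$ minimal with $R\subseteq Q_{2^m}$ and $j$ minimal with $2^j\ge 5d$, and observes that $R$ \emph{cannot} have half its volume in $Q_{2^{m-j-1}}$ — otherwise the lemma would force $R\subseteq Q_{2^{m-j-1}\cdot 5d}\subseteq Q_{2^{m-1}}$, contradicting minimality of $m$. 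Hence at least half the volume lies in the outer $j+1=O(\log d)$ layers, and pigeonhole finishes. This handles roundish and elongated bodies uniformly, precisely because it anchors $m$ at the smallest containing cube rather than reasoning directly about the shape of $R$. Without this lemma (or some replacement with the same effect), your argument does not establish part (i).
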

The proof relies on the following two lemmas.
 
\begin{lem}\label{lem:box_is_contained_in_cube}
Let $R\subseteq\R^d$ be a box. Suppose that $\Vol(Q_t\cap
R)\ge\frac{1}{2}\Vol(R)$, then $R\subseteq Q_{5td}$.  
\end{lem}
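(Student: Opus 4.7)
My strategy is to exploit the product structure of $R$ (as a rectangular parallelepiped with orthonormal axes $v_1,\ldots,v_d$ and side lengths $s_1,\ldots,s_d$) via a Fubini slicing argument that forces each $s_i$ to be at most $4t\sqrt{d}$. Once the sides are controlled, the $\ell_\infty$-diameter of $R$ will be at most $4td$, and combined with the existence of any point in $R \cap Q_t$, this will yield $R \subseteq Q_{5td}$. I may assume $\Vol(R) > 0$ and I write $R = c + \sum_{i=1}^d [-s_i/2, s_i/2] v_i$ for the center $c$.

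Next I would prove that $s_i \leq 4t\sqrt{d}$ for every $i$. Fix $i$ and slice $R$ into fibers parallel to $v_i$; each fiber is a segment of length exactly $s_i$. A line in direction $v_i = (a_1,\ldots,a_d)$ meets $Q_t$ in a set of length at most $2t / \max_j |a_j|$, which is bounded by $2t\sqrt{d}$ since a unit vector in $\R^d$ must have some coordinate of absolute value at least $1/\sqrt{d}$. Hence the fraction of each fiber lying in $Q_t$ is at most $\min(1, 2t\sqrt{d}/s_i)$; if $s_i$ exceeded $4t\sqrt{d}$, this fraction would be strictly less than $1/2$ on every fiber, and Fubini would then give $\Vol(R \cap Q_t) < \Vol(R)/2$, contradicting the hypothesis.

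Given $s_i \leq 4t\sqrt{d}$ for all $i$, the $\ell_2$-diameter of $R$ equals $\sqrt{\sum_i s_i^2} \leq \sqrt{d \cdot 16 t^2 d} = 4td$, so the $\ell_\infty$-diameter is also at most $4td$. Since $\Vol(R \cap Q_t) > 0$, I pick any $x \in R \cap Q_t$; then for every $p \in R$,
\[
\|p\|_\infty \leq \|p - x\|_\infty + \|x\|_\infty \leq 4td + t \leq 5td,
\]
which gives $R \subseteq Q_{5td}$. The main quantitative point is the bound $2t / \max_j |a_j| \leq 2t\sqrt{d}$ on the slicing length, which is where the factor $\sqrt{d}$ (and eventually the factor $d$ in the conclusion) enters; the remainder is a short volumetric comparison followed by the triangle inequality.
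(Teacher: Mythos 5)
Your proof is correct and is essentially the same argument as the paper's: both bound the length of each fiber of $Q_t\cap R$ parallel to an edge direction of $R$ by $2t\sqrt d$ (you do this by directly computing the intersection of a line with $Q_t$; the paper bounds the maximal such segment by $\diam(Q_t)$), deduce via Fubini that each side of $R$ is at most $4t\sqrt d$, and then combine the resulting diameter bound $4td$ with a point of $R\cap Q_t$ to conclude $R\subseteq Q_{5td}$.
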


\begin{proof}
Let $x_0$ be a vertex of $R$, and let $r_1,\ldots,r_d$ be the $d$
edges of $R$ with one end-point at $x_0$. Denote by $\absolute{r}$ the
length of a segment $r$, then we have
$\Vol(R)=\prod_{i=1}^d\absolute{r_i}$.  

Set $K=Q_t\cap R$ and for every $i\in\{1,\ldots,d\}$ fix
\[k_i=\text{a segment of maximal length in }K,\text{ which is parallel to }r_i.\]

Clearly, for every $i\in\{1,\ldots,d\}$ we have $\Vol(K)\le
\absolute{k_i}\cdot\prod_{j\neq i}\absolute{r_j}$. Hence the
assumption $\frac{1}{2}\Vol(R)\le \Vol(K)$ implies that  
\begin{equation}\label{eq:1/2r_i<k_i}
\frac{1}{2}\absolute{r_i}\le\absolute{k_i}
\end{equation} 
for all $i\in\{1,\ldots,d\}$. 

Let $\ell=\diam(R)$, and let $k\in K$. Since $D(0,k) \le t\sqrt{d}$, we have  
\[R\subseteq B(k,\ell)\subseteq B_{t\sqrt{d}+\ell}\subseteq Q_{t\sqrt{d}+\ell}.\]

On the other hand 
\[
\begin{split}
\ell & =\diam(R)=\sqrt{\absolute{r_1}^2+\ldots+\absolute{r_d}^2}
\le\sqrt{d}\max_i\{\absolute{r_i}\} \\ & \stackrel{(\ref{eq:1/2r_i<k_i})}
\le 2\sqrt{d}\max_i\{\absolute{k_i}\}\le 4td.
\end{split}
\] 

So $R\subseteq Q_{5td}$. 
\end{proof}

\begin{lem}\label{lem:Layers_main_idea}
For any box $R$ of volume $1$ in $\R^d$ there is a layer $L_i$ such
that $L_i\cap R$ contains a convex set $K$ with $\Vol(K)\ge C_d$, where
$C_d$ is as in (\ref{eq:C_d}). 
\end{lem}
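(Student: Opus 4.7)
My plan is to combine Lemma~\ref{lem:box_is_contained_in_cube} with two successive pigeonhole arguments: one to single out a dyadic layer carrying a noticeable portion of $R$, and a second to carve out a large convex sub-region of that intersection. Without the first lemma an elongated box can span arbitrarily many layers, so controlling the layer count is the heart of the argument.

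\textbf{Step 1 (locate the half-mass cube).} Let $t_{0}$ be the infimum of those $t>0$ for which $\Vol(R\cap Q_{t})\geq 1/2$. Applying Lemma~\ref{lem:box_is_contained_in_cube} to any $t$ slightly larger than $t_{0}$ and taking a limit $t\downarrow t_{0}$ yields $R\subseteq Q_{5t_{0}d}$. The minimality of $t_{0}$ gives also $\Vol(R\setminus Q_{t_{0}})\geq 1/2$.

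\textbf{Step 2 (capture a layer with substantial mass).} The mass $R\setminus Q_{t_{0}}$ lies entirely in the annular region $Q_{5t_{0}d}\setminus Q_{t_{0}}$. A dyadic layer $L_{i}=Q_{2^{i}}\setminus Q_{2^{i-1}}$ meets this annulus precisely when $t_{0}<2^{i}$ and $2^{i-1}<5t_{0}d$, constraining $i$ to an interval whose integer length is bounded (up to a constant absorbed into $C_d$) by $\log_{2}(10d) = \log_{2}(5d)+1$. By pigeonhole, some layer $L_{i_{0}}$ carries
$$\Vol(R\cap L_{i_{0}})\geq \frac{1}{2\log_{2}(10d)}.$$

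\textbf{Step 3 (extract a convex subset).} The set $R\cap L_{i_{0}}$ need not be convex, but it decomposes into $2d$ convex pieces. Since
$$\R^{d}\setminus Q_{2^{i_{0}-1}}=\bigcup_{j=1}^{d}\bigcup_{\sigma\in\{\pm 1\}}\{x:\sigma x_{j}>2^{i_{0}-1}\}$$
is a union of $2d$ open half-spaces, we obtain
$$R\cap L_{i_{0}}=\bigcup_{j=1}^{d}\bigcup_{\sigma\in\{\pm 1\}}\bigl(R\cap Q_{2^{i_{0}}}\cap\{\sigma x_{j}>2^{i_{0}-1}\}\bigr),$$
and each piece is an intersection of convex sets, hence convex. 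A second pigeonhole yields a convex piece $K$ with
$$\Vol(K)\geq \frac{\Vol(R\cap L_{i_{0}})}{2d}\geq \frac{1}{4d\log_{2}(10d)}=C_{d}.$$

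\textbf{Main obstacle.} The delicate point is Step~2. A box of unit volume (for example, a very elongated one with $r_{1}\gg 1$) can span a number of dyadic layers growing like $\log_{2} r_{1}$, so a naive pigeonhole over all layers touched by $R$ yields no uniform bound. Lemma~\ref{lem:box_is_contained_in_cube} is precisely what tames this: once half the mass of $R$ sits inside $Q_{t_{0}}$, all of $R$ is forced into $Q_{5t_{0}d}$, so the ``outer half'' of $R$'s mass is confined to an annulus whose outer-to-inner ratio depends only on $d$. This restricts the number of relevant layers to $O(\log d)$ and is exactly what makes the lower bound $C_{d}$, which depends only on the dimension, possible.
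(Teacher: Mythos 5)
Your proof is correct and follows essentially the same route as the paper's: Lemma~\ref{lem:box_is_contained_in_cube} confines $R$ to $O(\log d)$ dyadic layers, a first pigeonhole finds a heavy layer, and a second pigeonhole over $2d$ convex pieces yields $K$. The minor differences---a continuous half-mass threshold $t_0$ in place of the paper's minimal $m$ with $R\subseteq Q_{2^m}$, and the half-space cuts $\{\sigma x_j>2^{i_0-1}\}$ in place of the paper's nearest-face cells $K_i$---are cosmetic, and if anything slightly streamline the argument (your Step~3 pieces are more obviously convex, and your Step~1 avoids the implicit edge case $m\le j$).
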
 

\begin{proof}
Let $m\in\N$ be the minimal integer such that
$R\subseteq\bigcup_{i=0}^mL_i=Q_{2^m}$. Let $j\in\N$ be the minimal
integer satisfying $5d\le 2^j$. So we may also write  
\[Q_{2^m}=Q_{2^{m-j-1}}\cup L_{m-j}\cup L_{m-j+1}\cup\cdots\cup L_m.\]

Since $\Vol(R)=1$ we either have $\Vol(Q_{2^{m-j-1}}\cap
R)\ge\frac{1}{2}$ or $\Vol((L_{m-j}\cup\cdots\cup L_m)\cap
R)\ge\frac{1}{2}$. If $\Vol(Q_{2^{m-j-1}}\cap R)\ge\frac{1}{2}$, then
by Lemma \ref{lem:box_is_contained_in_cube} we have  
\[R\subseteq Q_{2^{m-j-1}\cdot 5d}\subseteq Q_{2^{m-1}}=\bigcup_{i=0}^{m-1}L_i,\] 
contradicting the minimality of $m$. So $\Vol((L_{m-j}\cup\ldots\cup
L_m)\cap R)\ge\frac{1}{2}$, and therefore $\Vol(L_i\cap
R)\ge\frac{1}{2(j+1)}\ge\frac{1}{2\log_2(10d)}$ for some
$i\in\{m-j,m-j+1,\ldots,m\}$.    

It remains to find a convex set $K\subseteq
L_i\cap R$ with $\Vol(K)\ge C_d=1/[4d\log_2(10d)]$. Denote by
$F_1,\ldots,F_{2d}$ the external $d-1$-dimensional faces of $L_i$
(namely, the faces of the cube $Q_{2^i}$). Each $F_i$ defines a convex
set  
\[K_i=\{x\in L_i: \forall j\neq i, D(x,F_i)\le D(x,F_j)\}.\] 
The sets $K_i\cap R$ are convex and one of them contains at least
$(2d)^{-1}$ of the volume of $L_i\cap R$, which gives the desired $K$.   
\end{proof}

\begin{proof}[Proof of Proposition \ref{prop:Comb_Q_implies_Danzer_Q}]
Assertion $(i)$ follows directly from Lemma \ref{lem:Layers_main_idea}. As for
$(ii)$, by assumption there is a constant $C_1$
such that $\#N_i\le C_1g(2^i)$. By adding points to some of the
$N_i$'s we may assume that $g$ attains integer values, and that
$\#N_i=C_1g(2^i)$ for every $i$. Recall that $g(x)=\Omega(x^d)$
(where, as usual, $h_1(x) = \Omega(h_2(x))$ means that $\liminf
\frac{h_1(x)}{h_2(x)} >0$). Thus by adding more points if needed and
increasing $C_1$ accordingly, we may also assume that the function
$\frac{g(x)}{x^d}$ is non-decreasing. 

For a measurable set $A$ we denote by $\mathfrak{D}(A)=\frac{\#(Y\cap
  A)}{\Vol(A)}$, the density of the set $Y$ in $A$, where
$Y=\bigcup_iN_i$. Note that for every $i>0$ the layer $L_i$ is the
union of $4^d-2^d$ cubes of edge length $2^{i-1}$, that intersect only
at their boundaries. So for every $i>0$ we have 
\[\mathfrak{D}(L_i)=\frac{C_1g(2^i)}{(4^d-2^d)(2^{i-1})^d}=
\frac{C_1}{2^d-1}\cdot\frac{g(2^i)}{2^i}.\] 

Since $\frac{g(x)}{x^d}$ is non-decreasing,
$\mathfrak{D}(L_i)\ge\mathfrak{D}(L_{i-1})$, and therefore
$\mathfrak{D}(L_i)\ge\mathfrak{D}(Q_{2^{i-1}})$. Also note that for
every $i>0$ we have $\Vol(L_i)=(2^d-1)\cdot \Vol(Q_{2^{i-1}})$, then 
\[\#N_i=\mathfrak{D}(L_i)\cdot \Vol(L_i)\ge\mathfrak{D}(Q_{2^{i-1}})\cdot (2^d-1)\Vol(Q_{2^{i-1}})=(2^d-1)\#(Y\cap Q_{2^{i-1}}).\]

In particular, for every $i$ we have $\#(Y\cap Q_{2^i})\le 2\#(N_i)=2C_1g(2^i)$.
Then for a given $n$, let $i\in\N$ be such that $n\le 2^i<2n$. Then
\[\#(Y\cap Q_n)\le\#(Y\cap Q_{2^i})\le 2C_1g(2^i)\le 2C_1g(2n).\]
\end{proof}

\begin{proof}[Proof of Theorem \ref{thm:Equivalent_Questions}]
For
$(ii) \implies (i)$, let $\varepsilon_i=\alpha_d^{-1}C_d\cdot
2^{-di}$, where $C_d$ is as in (\ref{eq:C_d}), and $\alpha_d=(3d)^d$
is as in Proposition \ref{claim:convex_vs_boxes}. Let $N_i''$ be an
$\varepsilon_i$-net for $(X=[-1,1]^d,\{\text{boxes}\})$ with
$\#N_i''\le Cg\left(\varepsilon_i^{-1/d}\right)$. Rescale by a factor of $2^i$ in
each axis. So $X$ becomes $Q_{2^i}$, and $N_i''$ becomes
$N_i'\subseteq Q_{2^i}$, a set that intersects every box of volume
$\varepsilon_i\cdot 2^{di}=\alpha_d^{-1}C_d$ in $Q_{2^i}$, with
$\#N_i'\le Cg(\varepsilon_i^{-1/d})=Cg((\alpha_d^{-1}C_d)^{-1/d}\cdot
2^i)$. Note that since $g(x)$ has polynomial growth we have
$\#N_i'=O(g(2^i))$ (with a uniform constant for all $i$), and it
follows from Proposition \ref{claim:convex_vs_boxes} that $N_i'$ intersects
every convex set of volume $C_d$ in $Q_{2^i}$. Let $N_i=N_i'\cap L_i$,
where $L_i$ is as in (\ref{eq:layers}). Then $\#N_i=O(g(2^i))$ and
$N_i$ intersects every convex set of volume $C_d$ that is contained in
$L_i$. By Proposition \ref{prop:Comb_Q_implies_Danzer_Q} the set
$Y=\bigcup_iN_i\subseteq \R^d$ is a Danzer set with growth rate
$O(g(T))$.  

It remains to prove the easier direction $(i) \implies (ii).$ 
Suppose that $Y\subseteq\R^d$ intersects every box of volume $1$ in
$\R^d$. For a given $\varepsilon>0$ consider the square
$Q_\varepsilon$ of edge length $\varepsilon^{-1/d}$, centered at the
origin. Then $N_\varepsilon \df Y\cap Q_\varepsilon$ intersects every
box of volume $1$ that is contained in $Q_\varepsilon$. Contract
$Q_\varepsilon$ by a factor of $\varepsilon^{1/d}$ in every one of the
axes. Then $\varepsilon^{1/d}
Q_\varepsilon=[-\frac{1}{2},\frac{1}{2}]^d$, and
$\varepsilon^{1/d}N_\varepsilon$ intersects every box of volume
$\varepsilon$ in it. In addition, if $Y=O(g(T))$, then there exists a
constant $C$ such that for every $\varepsilon>0$ we have
$\#\varepsilon^{1/d}N_\varepsilon=\#N_\varepsilon\le
Cg(\varepsilon^{-1/d})$. 
\end{proof}

\begin{proof}[Proof of Corollary \ref{cor:rational_Danzer}]
Given $D\subseteq\R^d$ with growth rate $O(g(T))$, that intersects
every convex set $K\subseteq\R^d$ of volume $1$, setting
$D'=\beta_d\cdot D$ for a suitable constant $\beta_d$, that depends
only on $d$, we obtain a set with the same growth rate that intersects
every convex set of volume $C_d$. 
Let $A_i=D'\cap L_i$, then $\#A_i=O(g(2^i))$, and it intersects every convex set $K\subseteq L_i$ of volume $C_d$ . Notice that a convex in $L_i$ with volume $C_d$
must contain a rectangle with some fixed thickness. So taking a thin
enough rational grid $\Gamma_i$ in $L_i$, and replacing every $x\in A_i$ by the
$2^d$ vertices of the minimal cube with vertices in $\Gamma_i$ that
contains $x$, we obtain a set $N_i\subseteq L_i\cap\Gamma_i\subseteq
\Q^d$ with the same properties as $A_i$. So by Proposition
\ref{prop:Comb_Q_implies_Danzer_Q}, $D_\Q\df\bigcup_i N_i$ is as
required.    
\end{proof}


\section{An improvement of a construction of Bambah and
  Woods}\label{sec:T^dlogT_construction} 
As we saw in Theorem \ref{thm:Equivalent_Questions}, the existence
of Danzer sets with various growth rates is equivalent to the
existence of $\vre$-nets for the range space of boxes. 
Finding bounds on the cardinalities of $\varepsilon$-nets in range spaces
is an active topic of research in combinatorics and computational
geometry.  We now derive Theorem \ref{thm:T^dlogT} from results in computational
geometry. 
Many of the results in this direction utilize the low complexity of the range space, which
is measured using the following notion. 
 
\begin{definition}
Let $(X,\RA)$ be a range space. A finite set $F\subseteq X$ is called \emph{shattered} if 
\[\#\{F\cap S:S\in\RA\}=2^{\#F}.\] 
The \emph{Vapnic Chervonenkis dimension}, or 
\emph{VC-dimension}, of a range space $(X,\RA)$ is 
\[VCdim(X,\RA)=\sup\{\#F:F\subseteq X \text{ is shattered}\}.\]
\end{definition}


\begin{example}\label{Examples:VCdim}
To explain this notion we compute the VC-dimension for the
following two simple examples, where $X=[0,1]^d$: 
\begin{itemize}
\item 
$\RA$ is the set of closed half-spaces, where $H$ is a half-space, i.e.
$H=\{x\in\R^d:f(x)\le t\}$, for some linear functional $f$ and
$t\in\R$. We show that $VCdim(X,\RA)=d+1$. First note that if
$\Lambda\subseteq X$, $\#\Lambda=d+1$, and $\Lambda$ is in general
position, then $\Lambda$ is shattered. On the other hand, by Radon's
Theorem, 
every $\Lambda\subseteq X$ of size $d+2$
can be divided into two sets $A,B$ such that their convex hulls
intersect. In particular, there is no half-space $H$ such that $A =
\Lambda\cap H$. 

\item 
$\RA$ is the set of convex sets. Here $VCdim(X,\RA)=\infty$: let $C$
be a $d-1$-dimensional sphere in $[0,1]^d$. Then every finite $C_0
\subseteq C$ is shattered since $C\cap \conv(C_0)=C_0$ and $\conv(C_0)\in\RA$. 

\end{itemize}
\end{example}

Low VC-dimension in particular yields a bound on the cardinality of
$\RA$. Lemma \ref{lem:bound_ranges} below was proved originally by
Sauer, and independently by Perles and Shelah; see \cite[Lemma 14.4.1]{AS08}.  

\begin{lem}\label{lem:bound_ranges}
If $(X,\RA)$ is a range space with VC-dimension $d$, and $\#X=n$, then
$\#\RA\le\sum_{i=0}^d\binom{n}{i}$. 
\end{lem}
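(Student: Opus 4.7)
The plan is to prove Lemma~\ref{lem:bound_ranges} by the classical shifting / downward-induction argument of Sauer, inducting on the size $n=\#X$ (with $d$ fixed, or equivalently on $n+d$). The base cases $n=0$ or $d=0$ are immediate: when $d=0$ no non-empty subset can be shattered, so $\RA$ contains only $\varnothing$ and possibly $X$---actually at most one set, matching $\sum_{i=0}^0\binom{n}{i}=1$; and $n=0$ is trivial. For the inductive step, fix a point $x\in X$ and split the combinatorial content of $\RA$ into two pieces living on $X':=X\sm\{x\}$.

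More precisely, define
\[
\RA_1 \df \{R\cap X' : R\in\RA\}\subseteq \mathcal{P}(X'),
\qquad
\RA_2 \df \{S\subseteq X' : S\in\RA \text{ and } S\cup\{x\}\in\RA\}.
\]
Each element $R\in\RA$ contributes $R\cap X'$ to $\RA_1$. The map $\RA\to\RA_1$ is at most $2$-to-$1$, and the sets $R\in\RA$ for which some other $R'\in\RA$ satisfies $R\cap X' = R'\cap X'$ come in pairs $\{S,S\cup\{x\}\}$; choosing $S$ as the representative shows $\#\RA\le \#\RA_1+\#\RA_2$. Then I would apply the inductive hypothesis to both range spaces on $X'$, which has $n-1$ points.

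The key step---and the only non-trivial part---is to verify that $\mathrm{VCdim}(X',\RA_2)\le d-1$. Suppose $F\subseteq X'$ is shattered by $\RA_2$. Then for every $F_0\subseteq F$ there exists $S\in\RA_2$ with $S\cap F=F_0$; by definition of $\RA_2$, both $S$ and $S\cup\{x\}$ lie in $\RA$, so we can realize both $F_0$ and $F_0\cup\{x\}$ as $T\cap (F\cup\{x\})$ for some $T\in\RA$. Thus $F\cup\{x\}$ is shattered by $\RA$, forcing $\#F+1\le d$, i.e.\ $\#F\le d-1$. Meanwhile $\mathrm{VCdim}(X',\RA_1)\le d$ is immediate since a subset of $X'$ shattered by $\RA_1$ is also shattered by $\RA$.

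Combining the two inductive bounds and using Pascal's identity,
\[
\#\RA \le \#\RA_1+\#\RA_2
\le \sum_{i=0}^{d}\binom{n-1}{i}+\sum_{i=0}^{d-1}\binom{n-1}{i}
= \sum_{i=0}^{d}\binom{n}{i},
\]
completes the induction. The main conceptual obstacle is the VC-dimension bound on $\RA_2$; once that observation is in place the rest is bookkeeping, and no further geometric input is needed beyond the definition of VC-dimension.
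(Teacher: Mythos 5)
The paper does not actually prove Lemma~\ref{lem:bound_ranges}; it cites \cite[Lemma 14.4.1]{AS08} instead. Your inductive argument is correct and is in fact the standard Sauer--Shelah proof given there: the decomposition $\#\RA=\#\RA_1+\#\RA_2$ via a distinguished point $x$, the observation that a set $F$ shattered by $\RA_2$ gives $F\cup\{x\}$ shattered by $\RA$ (so $\mathrm{VCdim}(\RA_2)\le d-1$), and Pascal's identity to close the induction are all exactly as in the cited source.
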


As a corollary we have (see \cite{AS08} Corollary $14.4.3$): 

\begin{cor}\label{cor:VC_of_intersections}
Let $(X,\RA)$ be a range space of VC-dimension $d$, and let
$\RA_k=\{s_1\cap\cdots\cap s_k:s_i\in\RA\}$. Then $VCdim(X,\RA_k)\le
2dk\log(dk)$. 
\end{cor}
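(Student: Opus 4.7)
The plan is to bound the number of distinct traces of $\RA_k$ on a finite set, and compare this to the $2^n$ traces required for shattering. This is a standard double-counting argument based on the Sauer–Shelah lemma.

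First, I would fix a finite set $F \subseteq X$ with $|F|=n$ which is shattered by $\RA_k$, so that $|\RA_k|_F| = 2^n$. Observe that every trace $r \cap F$ with $r \in \RA_k$ can be written as $(s_1 \cap F) \cap \cdots \cap (s_k \cap F)$ for some $s_1,\ldots,s_k \in \RA$. Hence the number of distinct traces satisfies
\[
|\RA_k|_F| \;\le\; |\RA|_F|^{k}.
\]

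Next I would invoke Lemma \ref{lem:bound_ranges} applied to the induced range space $(F,\RA|_F)$, whose VC-dimension is still at most $d$, to get
\[
|\RA|_F| \;\le\; \sum_{i=0}^{d}\binom{n}{i} \;\le\; n^{d}
\]
(assuming $n \ge d$, which we can do since otherwise the conclusion is trivial). Combining with the previous step yields the key inequality
\[
2^n \;\le\; n^{dk}, \qquad \text{i.e.}\qquad n \;\le\; dk\,\log_2 n.
\]

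Finally, I would solve this transcendental inequality to obtain $n \le 2dk\log(dk)$. The only obstacle is the elementary verification that $n = 2dk\log(dk)$ violates $n \le dk \log_2 n$, because
\[
dk\log_2(2dk\log(dk)) \;=\; dk\bigl(1 + \log_2(dk) + \log_2\log_2(dk)\bigr) \;<\; 2dk\log(dk)
\]
for all sufficiently large $dk$; the small cases are absorbed into the constants (or handled by noting the result is vacuous when $2dk\log(dk) < d$). This shows that no set of size exceeding $2dk\log(dk)$ can be shattered, which is exactly the claim.
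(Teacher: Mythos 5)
The paper does not prove this corollary itself but cites Alon--Spencer, Corollary 14.4.3; your argument is precisely the standard proof from that reference, and the key step --- that traces of $\RA_k$ on a shattered $n$-set $F$ are $k$-fold intersections of traces of $\RA$ on $F$, so $\#(\RA_k|_F) \le (\#(\RA|_F))^k$, combined with the Sauer--Shelah bound of Lemma~\ref{lem:bound_ranges} --- is right. Two small points deserve tightening: the bound $\sum_{i=0}^d \binom{n}{i} \le n^d$ fails for $d=1$ (it would read $1+n\le n$), so you should either use $\sum_{i=0}^d \binom{n}{i} \le (en/d)^d$ for $n\ge d$ or note that the paper only invokes the corollary with $d\ge 2$; and the closing step ``for all sufficiently large $dk$, small cases absorbed'' is slightly under-argued, since a bound stated for all $d,k$ needs the reduction $1+\log_2\log_2(dk) < \log_2(dk)$, which holds only for $dk\ge 5$, so the cases $dk\le 4$ must genuinely be checked by hand (they are easy). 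Neither issue affects the correctness of the approach, which matches the cited source.
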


Since every $d$-dimensional box is the intersection of $2d$
half-spaces, combining Example \ref{Examples:VCdim} on half-spaces and
Corollary \ref{cor:VC_of_intersections} we deduce the following.  

\begin{cor}\label{cor:VCdim_boxes}
Let $Q$ be a $d$-dimensional cube, then $VCdim(Q,\{\text{boxes}\})\le
4d(d+1)\log(2d(d+1))$. 
\end{cor}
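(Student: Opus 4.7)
The plan is to combine the two ingredients just stated. From Example \ref{Examples:VCdim}, the range space $(Q,\HH)$ of closed half-spaces intersected with the cube $Q$ has VC-dimension exactly $d+1$. Separately, every box $R \subseteq \R^d$ (axis-aligned or not) is a convex polytope with exactly $2d$ facets, each lying on an affine hyperplane, and is therefore expressible as an intersection of $2d$ closed half-spaces. Consequently the family $\BB$ of boxes satisfies $\BB|_Q \subseteq \HH_{2d}$, where $\HH_{2d} = \{h_1 \cap \cdots \cap h_{2d} : h_i \in \HH\}$ in the notation of Corollary \ref{cor:VC_of_intersections}.

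Next I would invoke Corollary \ref{cor:VC_of_intersections} with input VC-dimension $d+1$ and $k = 2d$. This yields
\[
VCdim(Q,\HH_{2d}) \;\le\; 2(d+1)(2d)\log\!\bigl((d+1)(2d)\bigr) \;=\; 4d(d+1)\log\bigl(2d(d+1)\bigr).
\]
Since VC-dimension is monotone with respect to inclusion of the range family (if a set is shattered by a subfamily, it is shattered by the larger family), the inclusion $\BB|_Q \subseteq \HH_{2d}$ immediately transfers this bound to $(Q,\BB)$, giving the desired inequality.

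The argument is essentially a bookkeeping application of the two previous results, and I do not expect any genuine obstacle: the only point worth double-checking is that $2d$ (rather than some larger number) really suffices to realize an arbitrary box as an intersection of half-spaces, which holds because a $d$-dimensional parallelepiped has exactly $2d$ facets. No restriction to axis-aligned boxes is needed, so the bound applies uniformly to the range space appearing in Question \ref{ques:CG_Danzer} and in the proof of Theorem \ref{thm:Equivalent_Questions}.
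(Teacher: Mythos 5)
Your argument is exactly the one the paper intends: write each box as an intersection of $2d$ half-spaces, use the VC-dimension $d+1$ of half-spaces from Example \ref{Examples:VCdim}, and apply Corollary \ref{cor:VC_of_intersections} with $k=2d$ together with monotonicity of VC-dimension under inclusion of range families. This matches the paper's (one-line) proof, including the arithmetic $2(d+1)(2d)\log((d+1)(2d)) = 4d(d+1)\log(2d(d+1))$.
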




We proceed to the proof of Theorem \ref{thm:T^dlogT}. 
To simplify notations, we depart slightly from 
\equ{eq:box_and_ball}, and denote by $Q_n\subseteq\R^d$ the cube of
edge length $n$ centered at the origin in this section. We begin with
the following proposition, which is a special case of a result of 
Haussler and Welzl \cite{HW87}. For completeness 
we include the proof of this proposition, which we learned from Saurabh
Ray. 

\begin{prop}\label{prop:probabilistic_constraction}
For any $d$ there is a constant $C$ such that for any integer $n>0$
there exists a finite set $N\subseteq Q_n$ with 
$\#N=Cn^d\log n$, which intersects any box
$R\subseteq Q_n$ of volume $1$.
\end{prop}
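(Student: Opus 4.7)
The plan is to produce $N$ as a random sample drawn i.i.d.\ uniformly from $Q_n$, and to verify by a double sampling argument that such a sample meets every unit-volume box with positive probability. This is the classical $\varepsilon$-net theorem of Haussler and Welzl, whose hypotheses are supplied by Corollary \ref{cor:VCdim_boxes}: the range space of boxes in $Q_n$ has $VCdim$ bounded by a constant $d_0 = d_0(d)$, and a box of volume $1$ has measure $\varepsilon = 1/n^d$ under the uniform distribution on $Q_n$. Fix $m = C n^d \log n$ with $C = C(d)$ to be chosen, and let $N_1, N_2 \subseteq Q_n$ be two independent sets of $m$ i.i.d.\ uniform points.

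The first step is a symmetrization inequality. Let $B$ be the event that some box $R \subseteq Q_n$ of volume $1$ is disjoint from $N_1$, and $A$ the event that in addition $|R \cap N_2| \geq m/(2n^d)$. The random variable $|R \cap N_2|$ is binomial with mean $m/n^d = C\log n$ and variance at most $C\log n$, so a Chebyshev bound gives $\Pr[|R \cap N_2| \geq m/(2n^d)] \geq 1/2$ for every such $R$, provided $C$ exceeds an absolute constant. Since $N_2$ is independent of $N_1$, picking measurably an $R^\ast = R^\ast(N_1)$ witnessing $B$ yields $\Pr[B] \leq 2\Pr[A]$.

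The second step is to condition on the $2m$-element multiset $S = N_1 \cup N_2$; under this conditioning the partition $(N_1, N_2)$ is uniform over the $\binom{2m}{m}$ ways of splitting $S$. By Lemma \ref{lem:bound_ranges} applied to the trace of the box range-space on $S$, the number of distinct subsets of $S$ cut out by boxes is at most $(2m)^{d_0}$. For any fixed $T \subseteq S$ of size $k$, the probability that $T \subseteq N_2$ equals $\binom{2m-k}{m-k}/\binom{2m}{m} \leq 2^{-k}$ (each of the $k$ factors in the ratio is bounded by $1/2$). Union-bounding over the at most $(2m)^{d_0}$ traces of size $k \geq m/(2n^d) = (C/2)\log n$ gives
\[
\Pr[A \mid S] \;\leq\; (2m)^{d_0} \cdot 2^{-(C/2)\log n},
\]
and since the right-hand side does not depend on $S$ the same bound holds unconditionally.

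Plugging in $m = C n^d \log n$ converts the right-hand side into $\mathrm{poly}_d(n,\log n) \cdot n^{-\Omega(C)}$, which is strictly less than $1/4$ once $C$ is chosen large enough in terms of $d$. Consequently $\Pr[B] < 1/2$, and any realization of $N_1$ in the complement of $B$ furnishes the desired set $N$. The main obstacle is carrying out the symmetrization step cleanly and tuning $C$ so that the polynomial trace-count from VC-dimension is dominated by the exponentially decaying probability that a fixed trace is swallowed by $N_2$; the combinatorial estimates themselves are routine.
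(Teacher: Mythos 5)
Your proof is correct, but it follows a genuinely different route from the one in the paper, and it is worth spelling out the contrast. You run the classical Haussler--Welzl $\varepsilon$-net argument: two independent i.i.d.\ samples $N_1, N_2$ drawn from the \emph{continuous} uniform measure on $Q_n$, a Chebyshev-based symmetrization reducing the event that $N_1$ misses some unit box to the event that a trace of size $\geq (C/2)\log n$ falls entirely in $N_2$, and then conditioning on the pooled $2m$-point sample so that Sauer's lemma (Lemma \ref{lem:bound_ranges}) together with the $2^{-k}$ swap bound gives the union estimate. The paper instead \emph{discretizes first}: it fixes the explicit grid $\Gamma_n$ of $n^{2d}$ points, observes that any unit box contains $\Omega(n^d)$ grid points, samples each grid point independently with probability $p = c\log(n)/n^d$, controls $\#N$ by Chernoff, and does a single direct union bound over the at most $(2m)^{O(1)}$ distinct boxes modulo their intersections with $\Gamma_n$ (again via Sauer/Corollary \ref{cor:VCdim_boxes}). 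Your version avoids the grid and the auxiliary computation of how many grid points a unit box captures, at the cost of the symmetrization step and the measurable-selection technicality for the witness $R^\ast(N_1)$; the paper's version trades the double-sampling machinery for a one-shot Bernoulli sample where the union bound is immediate once the range space has been made finite. Both rest on exactly the same VC input. One small loose end common to both proofs: the bound $\Pr[|R\cap N_2| \geq m/(2n^d)] \geq 1/2$ (respectively the paper's Chernoff step) needs $C\log n$ bounded below, which silently excludes tiny $n$; this is harmless since for $n$ below any fixed threshold the statement is trivial, but it is worth a sentence.
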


\begin{proof}
Let $\Gamma_n$ be the set of vertices of a regular decomposition of $Q_n$ into
cubes of edge-length $1/n$. Then each edge of $Q_n$ is divided into $n^2$
points, and therefore $\#\Gamma_n=n^{2d}$.  
Note that any box $R\subseteq Q_n$ of volume $1$ that is contained in
$Q_n$ contains $\Omega(n^{2d}/n^d)=\Omega(n^d)$ points of $\Gamma_n$
(up to an error of $O(n^{d-1})$), and at least $n^d/2$ points (when
$n$ is sufficiently large). 

Let 
\[p=\frac{c\log(n)}{n^d}\in(0,1),\] 
where $c$ depend only on $d$, and will be chosen later. Let $N$ be a
random subset of $\Gamma_n$ that is obtained by choosing points from
$\Gamma_n$ randomly and independently with probability $p$. Then $\#N$
is a binomial random variable $B(m,p)$, where $m=n^{2d}$, with
expectation  
\[\E(\#N)=mp=n^{2d}\cdot p=c\cdot n^d\log(n).\] 

Since $\#N=B(m,p)$ the values of $\#N$ concentrate near $\E(\#N)$. To
be precise, using the Chernoff bound for example (see \cite{Chernoff})
one obtains  
\[Prob[\absolute{\#N-\E(\#N)}\ge \E(\#N)/2]\le e^{-\frac{\E(\#N)}{16}}.\]

So in particular with probability greater than $(n-1)/n$ we have 
\begin{equation}\label{eq:Chernoff_for_N}
\frac{1}{2}cn^d\log(n)\le\#N\le \frac{3}{2}cn^d\log(n).
\end{equation}

$N$ misses a given box $R$ if all the points in $R$ are not chosen in
the random set $N$. This occurs with probability at most 
\[(1-p)^{n^d/2}=
\left(1-\frac{c\log(n)}{n^d}\right)^{\frac{n^d}{c\log(n)}\cdot\frac{c\log(n)}{2}}\le
\left(1-\frac{c\log(n)}{n^d}\right)^{\left(\frac{n^d}{c\log(n)}+1\right)
  \frac{c\log(n)}{4}}\] 
\[\le e^{-c\log(n)/4}=\frac{1}{n^{c/4}}.\]

Let our collection of ranges $\RA$ be the collection of boxes in $Q_n$ (where
two boxes $R_1, R_2$ are considered to be equal if their intersections with
$\Gamma_n$ coincide).
By Corollary \ref{cor:VCdim_boxes} we have $VCdim(Q_n,\RA
)\le 4d(d+1)\log(2d(d+1))\le 4(d+1)^3\df t$.  
By Lemma \ref{lem:bound_ranges} we have
\[\#\RA\le\sum_{i=0}^t\binom{\#\Gamma_n}{i}\le (t+1)n^{2dt}.\]
Pick $c/4=2dt+1=O(d^4)$. A standard union bound gives that the
probability to miss some box $R$ is at most
\begin{equation}\label{eq:Union_bound}
(t+1)n^{2dt}\cdot\frac{1}{n^{c/4}}=O\left(\frac{1}{n}\right).
\end{equation}

By (\ref{eq:Chernoff_for_N}) and (\ref{eq:Union_bound}) we deduce that
 there exists a set $N\subseteq \Gamma_n$ of size at most
$\frac{3}{2}cn^d\log(n)$ that intersects every box of volume $1$ in
$Q_n$. 
\end{proof}

\begin{proof}[Proof of Theorem \ref{thm:T^dlogT}]
Let $\varepsilon>0$. Let $n\in\N$ be the minimal positive integer that
satisfies $1/n^d\le\varepsilon$. By Proposition
\ref{prop:probabilistic_constraction} for every $n\in\N$ we have a set
$N_n\subseteq Q_n$, with $\#N_n\le Cn^d\log(n)$ that intersects every
box of volume $1$ in $Q_n$. Rescaling by a factor of $1/n$ in each axis,
we obtain a set $Y_n\subseteq[-1/2,1/2]^d$ of size at most
$Cn^d\log(n)$ that intersects every box of volume $1/n^d$ in
$[-1/2,1/2]^d$. In particular, for every $\varepsilon$ we have
constructed an $\varepsilon$-net of cardinality $Cn^d\log(n)$ for the range
space $([0,1]^d,\{\text{boxes}\})$. Notice that
$n-1<\varepsilon^{-1/d}\le n$, so we showed $(ii)$ of Theorem
\ref{thm:Equivalent_Questions}, with $g(x)=x^d\log(x)$, and therefore
we have a Danzer set of growth rate $O(T^d\log(T))$. 
\end{proof}

\section{Some open questions}\label{sec: questions}
We conclude with a list of open questions which would constitute
further progress toward Danzer's question. 

\subsection{Fractal substitution systems and model sets}
In \S \ref{sec:substitution} we showed that Delone sets obtained from {\em polygonal}
substitution tilings are not Danzer sets. There is also a theory of
substitution tilings in which the basic tiles are fractal sets (see
\cite{So97} and the references therein), and our methods do not apply to these tilings. It would be
interesting to extend Theorem \ref{thm:Substitution_Not_Danzer} to substitution
tilings which are not polygonal. Also it would be interesting to
extend  Theorem \ref{thm:Substitution_Not_Danzer} to finite unions of
sets obtained from substitution tilings.

Similarly, in our definition of cut-and-project sets, the internal
space was taken to be a real vector space. More general constructions,
often referred to as {\em model sets}, 
in which the internal space is  an arbitrary locally compact abelian  group have
also been considered, see e.g. \cite{Me94, BM00}. It is likely that
Theorem \ref{thm:Ratner,cut-and-project} can be extended to 
model sets with a similar proof. 

\subsection{Quantifying the density of forests}
 We do not know whether the dense forest constructed in
\S\ref{sec:dense_forest} is a Danzer set. One can also ask how close
it is to being one, in the following sense. One can quantify the `density' of a dense forest
by obtaining upper bounds on the function $\vre(T)$; as we remarked
above a Danzer
set is a dense forest with $\vre(T) = O\left(T^{-1/(d-1)}\right)$. As mentioned in \S\ref{sec:dense_forest}, the example of Peres given in
\cite{Bishop} is a dense forest in $\R^2$ with $\vre(T) = O(T^4)$. It
would be interesting to construct dense forests in the plane with $\vre(T) =
O(T^{s})$ for $s<4$.  

In our example of a dense forest, an upper bound on the function $\vre(T)$
would follow from a bound on the rate of convergence of ergodic
averages in Ratner's equidistribution theorem, for one parameter
unipotent flows on homogeneous spaces such as those in Proposition
\ref{prop:examples_of_ue}. Note that in these examples, when $d>1$,
one-parameter groups are not horospherical and hence such bounds are
very difficult to obtain. In a work in progress \cite{LMM},
Lindenstrauss, Margulis and Mohammadi prove such bounds but they are
much weaker than the bounds required to prove the Danzer property.  

\subsection{Relation to dynamics on pattern spaces}
The collection $\mathscr{Cl}$ of closed subsets of $\R^d$ is compact with respect to  a natural 
topology sometimes referred to as the {\em Chabauty topology}. Roughly
speaking, two $A$ and $B$ are close to each other in $\mathscr{Cl}$ 
if their intersections with large balls are close in the
Hausdorff metric. The group $\ASL_d(\R)$ of volume preserving affine
maps acts on $\mathscr{Cl}$ via its action on $\R^d$, and
recalling Proposition \ref{prop:dynamical}, we can interpret the
Danzer 
property as a statement about this action. Namely $Y \in \mathscr{Cl}$
is not DDanzer (in the sense of \S\ref{sec:cut-and-project}) 
if its orbit-closure contains the empty set. This leads to the
following question:
\begin{question}\label{q: minimal sets}
Is it true that the only minimal sets for the action of $\ASL_d(\R)$
on $\mathscr{Cl}$
are the fixed points $Y=\varnothing, Y= \R^d$? 
\end{question}

In \cite{Gowers}, Gowers proposed a weakening of the Danzer question.
He asked whether there are Danzer sets $Y$  which have the additional
property that $\sup\{\#(Y \cap C) :C \text{ convex}, \, \Vol(C)=1 \}
<\infty.$ It is not hard to show that an affirmative answer to
Question \ref{q: minimal sets} would imply a negative answer to
Gowers' question. 
  \ignore{
\appendix
\section{Proof of Theorem \ref{thm:Morris} - By Dave Morris}
We will need some preliminary results. We use $G^{\C}$ to denote the complex points in the algebraic group $\mathbf{G}$.

\begin{lem}[Mostow \cite{Mostow}] \label{SelfAdjoint}
Suppose $\GC_1 \subseteq \GC_2 \subseteq \cdots \subseteq \GC_r
\subseteq \SL_n(\CC)$, where each $\GC_i$ is connected, reductive,
Zariski-closed subgroup. Then there exists $x \in \SL_n(\CC)$, such
that $x^{-1} \GC_i x$ is self-adjoint, for every~$i$. \textup(That is,
if $g \in x^{-1} \GC_i x$, then the transpose-conjugate of~$g$ is also
in $x^{-1} \GC_i x$.\textup) 
\end{lem}

Lemma \ref{SL3} and Corollary \ref{SL2} are presumably known, but we provide proofs because we
do not know where to find them in the literature. 

\begin{lem} \label{SL3}
Suppose $\GC$ is a connected, almost-simple, Zariski-closed subgroup
of $\SL_n(\CC)$. 
If $\GC$ contains a conjugate of the top-left $\SL_3(\CC)$, then $\GC$
is conjugate to the top-left $\SL_k(\CC)$, for some~$k$. 
\end{lem}

\begin{proof}
Let $k$ be maximal, such that $\GC$ contains a conjugate of the
top-left $\SL_k(\CC)$. After replacing $\GC$ by a conjugate, we may
assume that $\GC$ contains the top-left $\SL_k(\CC)$. Since
$\SL_k(\CC)$ and $\GC$ are reductive, \cref{SelfAdjoint} tells us
there is some $x \in \SL_n(\CC)$, such that $x^{-1} \SL_k(\CC)x$ and
$x^{-1}(\GC)x$ are self-adjoint. Let $V$ be the $(n - k)$-dimensional
subspace of $\CC^n$ that is centralized by $\SL_k(\CC)$. Since
$\SU(n)$ acts transitively on the set of subspaces of any given
dimension, there is some $h \in \SU(n)$, such that $xh(V) =
V$. Therefore, after replacing $x$ with $xh$, we may assume that
$x^{-1}\SL_k(\CC)x$ centralizes the same subspace as
$\SL_k(\CC)$. Since $x^{-1}\SL_k(\CC)x$ is self-adjoint (because this
property is not affected by conjugation by an element of $\SU(n)$), we
conclude that $x^{-1}\SL_k(\CC)x = \SL_k(\CC)$. Therefore, after
replacing $\GC$ with $x^{-1}(\GC)x$, we may assume that $\GC$ is
self-adjoint, and contains the top-left $\SL_k(\CC)$. 

For $1 \le i, j \le n$, let $e_{i,j}$ be the elementary matrix
with~$1$ in the $(i,j)$ entry, and all other entries~$0$. We may write 
	$$ \Liew{SL}_n(\CC) = \Liew{SL}_k(\CC) \oplus \Liew Z \oplus X_1
        \oplus \cdots \oplus X_k \oplus Y_1 \oplus  \cdots \oplus Y_k
        , $$ 
where
	\begin{itemize}
	\item $\Liew{SL}_n(\CC)$ and $\Liew{SL}_k(\CC)$ are the Lie
          algebras of $\SL_n(\CC)$ and $\SL_k(\CC)$, respectively, 
	\item $\Liew Z$ is the centralizer of $\Ad_{\SL_n(\CC)}(\SL_k(\CC))$ in $\Liew{SL}_n(\CC)$,
	\item $X_i$ is the linear span of $\{\, e_{i,j} : k+1 \le j \le n \,\}$,
	and
	\item $Y_j$ is the linear span of $\{\, e_{i,j} : k+1 \le i \le n \,\}$.
	\end{itemize}
Now, if we let
	$$ A = \{\, \mathrm{diag}\bigl( a_1,a_2, \ldots, a_{k-1},
        1/(a_1 a_2\cdots a_{k-1}), 1, 1, \ldots, 1 \bigr) : a_i \in
        \CC^\times \,\} $$ 
be the group of diagonal matrices in the top-left $\SL_k(\CC)$, then
$X_1,X_2,\ldots,X_k$ and $Y_1,Y_2,\ldots,Y_k$ are weight spaces of
$\Ad_{\SL_n(\CC)} (A)$. More precisely, if we let $a_k = 1/(a_1
a_2\cdots a_{k-1})$, then: 
	\begin{itemize}
	\item for $x \in X_i$, we have $(\Ad_{\SL_n(\CC)} (a))x = a^{-1} x a = (1/a_i) x$,
	and
	\item for $y \in Y_j$, we have $(\Ad_{\SL_n(\CC)} (a))y = a^{-1} y a = a_j y$.
	\end{itemize}
We may assume $\GC \neq \SL_k(\CC)$. Then, since $\GC$ is simple, its
Lie algebra~$\Liew G$ cannot be contained in $\Liew{SL}_k(\CC) \oplus
\Liew Z$, so $\Liew G$ projects nontrivially to some $X_i$ or $Y_j$. In
fact, since $\GC$ is self-adjoint, it must project nontrivially to
both $X_i$ and~$Y_i$, for some~$i$. Hence, since $\Liew G$ is invariant
under $\Ad_{\SL_n(\CC)} A$, and $X_i$ is a weight space of this torus,
we conclude that $X_i \cap \Liew G$ is nontrivial. Conjugating by an
element of $I_k \times \SU(n - k)$, we may assume that $\Liew G$
contains the matrix $e_{i,k+1}$. Applying an appropriate element of
$\Ad_{\SL_n(\CC)} (\SU_k(\CC))$ shows that $e_{k,k+1} \in \Liew g$. Then,
since $\GC$ is self-adjoint, $\Liew G$ also contains
$e_{k+1,k}$. Therefore, $\Liew G$ contains the Lie algebra generated by
$\Liew{SL}_k(\CC)$, $e_{k,k+1}$, and~$e_{k+1,k}$. In other words, $\Liew
G$ contains $\Liew{SL}_{k+1}(\CC)$. This contradicts the maximality
of~$k$. 
\end{proof}

\begin{cor} \label{SL2}
Suppose $\GC$ is a connected, almost-simple, Zariski-closed subgroup
of $\SL_n(\CC)$. 
If $\GC$ contains a conjugate of the top-left $\SL(2,\CC)$, then either 
	\begin{itemize}
	\item $\GC$~is conjugate to the top-left $\SL_k(\CC)$, for
          some $k \ge 2$ \textup(so $\GC$ is of
          type~$A_{k-1}$\textup),  
	or 
	\item $\GC \cong \Sp_{2\ell}(\CC)$, for some $\ell \ge 2$
          \textup(in other words, $\GC$ is of type $C_\ell$\textup). 
	\end{itemize}
\end{cor}

\begin{proof}
Arguing as in the first paragraph of the proof of \cref{SL3}, we may
assume that $\GC$ contains the top-left $\SL_2(\CC)$, and is
self-adjoint. Then $\Liew G_\alpha = \CC e_{1,2}$ is a root space
of~$\GC$. Note that every nonzero element of~$\Liew G_\alpha$ is a
matrix of rank~$1$.  

We may assume $\GC$ is not of type~$C_\ell$ (and $\GC \neq
\SL_2(\CC)$). This implies there is another root $\beta$ of~$\GC$,
such that $[\Liew G_\alpha, \Liew G_\beta] \neq \{0\}$, $\beta \neq \pm
\alpha$, and $\beta$ has the same length as~$\alpha$. All roots of the
same length are conjugate under the Weyl group, so every nonzero
element of $\Liew G_\beta$ is also a matrix of rank~$1$. 

In the notation of the proof of \cref{SL3}, with $k = 2$, we have $a_2
= 1/a_1$, so $X_1 + Y_2$ and $X_2 + Y_1$ are weight spaces of
$\Ad_{\SL_n(\CC)} (A)$.  
Since $\Liew G_{\alpha + \beta} = [\Liew G_\alpha, \Liew G_\beta] \neq \{0\}$, we have 
	$$\Liew G_\beta = [\Liew G_{\alpha + \beta}, \Liew G_{-\alpha}]
        \subseteq \bigl[\Liew G, \Liew{SL}_2(\CC) \bigr]  
	= \Liew{SL}_2(\CC) \oplus (X_1 + Y_2) \oplus (X_2 + Y_1)
	.$$
Since $\beta \neq \pm \alpha$, we know $\Liew G_\beta \not\subseteq
\Liew{SL}_2(\CC)$, so we conclude that $\Liew G_\beta$ is contained in
either $X_1 + Y_2$ or $X_2 + Y_1$. We may assume, without loss of
generality, that it is contained in $X_1 + Y_2$. From the preceding
paragraph, we know that every $u \in \Liew G_\beta$ is a matrix of
rank~$1$. Therefore, $u$ cannot have a nonzero component in both $X_1$
and~$Y_2$. So either $u \in X_1$ or $u \in Y_2$. 

Let us assume $u \in X_1$. (The other case is similar.) Conjugating by
an element of $I_2 \times \SU(n - 2)$ (and multiplying by a scalar),
we may assume $u = e_{1,3}$. So $e_{1,3} \in \Liew G$. Then, applying
an element of $\Ad_{\SL_n(\CC)} (\SU(2))$, we see that $e_{2,3}$ also
belongs to~$\Liew G$. Then, since $\GC$ is self-adjoint, the matrix
$e_{3,2}$ also belong to~$\Liew G$. We now have enough elements to
conclude that $\Liew G$ contains the top-left $\Liew{SL}_3(\CC)$. So
\cref{SL3} applies.  
\end{proof}

\begin{prop}
Suppose $G$ is a connected, semisimple subgroup of $\SL_n(\RR)$. If $G
\cap \SL_n(\ZZ)$ is a cocompact lattice in~$G$, then $G$ does not
contain any conjugate of the top-left $\SL_2(\RR)$. 
\end{prop}

\begin{proof}
By passing to a subgroup of~$G$, there is no harm in assuming that the
lattice $G \cap \SL_n(\ZZ)$ is irreducible. This implies that (up to
finite index) $G$~is (defined over~$\QQ$ and) $\QQ$-simple. Hence, if
we let $N$ be any simple factor of the complexification~$\GC$, then
there is a finite extension~$F$ of~$\QQ$, such that $N$ is defined
over~$F$, and $G$~is the restriction of scalars of~$N$
\cite[\S3.1.2]{Tits-Classification}. Furthermore, since $G$ is
anisotropic over~$\QQ$ (recall that the lattice in~$G$ is cocompact),
we know that $N$ is anisotropic over~$F$.  

Suppose $G$ contains a conjugate of the top-left $\SL_2(\RR)$. Then
the simple factor~$N$ can be chosen to contain a conjugate of the
top-left $\SL_2(\CC)$. This will lead to a contradiction. 
From \cref{SL2}, there are two cases to consider.


{\em Case 1. Assume $N$ is conjugate to the top-left $\SL_k(\CC)$.}
Then $N$ centralizes an $(n - k)$-dimensional subspace $V_N$ of
$\CC^n$. Since $N$ is defined over~$F$, we know that $V_N$ is also
defined over~$F$. Now, let $V$ be the $(n - k)$-dimensional subspace
that is centralized by $\SL_k(\CC)$. This is also defined over~$F$ (in
fact, it is defined over~$\QQ$), so there is some $x \in \SL_n(F)$,
such that $x(V) = V_N$. Then the conjugate $x^{-1}Nx$
centralizes~$V$. So $\SL_k(\CC)$ and $x^{-1}Nx$ are two Levi subgroups
of the centralizer of~$V$. Both of them are $F$-subgroups, so they are
conjugate over~$F$. Therefore, $N$~is conjugate to $\SL_k(\CC)$
over~$F$. This contradicts the fact that $\SL_k(\CC)$ is split, but
$N$ is anisotropic over~$F$. 

{\em Case 2. Assume $G$ is of type $C_\ell$ \textup(with $\ell \ge 2$\textup). }
Let $\rho \colon N \to \SL_n(\CC)$ be the inclusion. This is a
representation that is defined over~$F$. By passing to a
subrepresentation, there is no harm in assuming that it is irreducible
over~$F$. 

We claim that $\rho$ is irreducible over~$\CC$. (This means that
$\rho$ is ``absolutely irreducible.") Suppose not. Then $\CC^n$ is the
direct sum of two $N$-invariant subspaces. The top-left $\SL_2(\CC)$
must be trivial on one of them (since it centralizes a subspace of
codimension~$2$). Because $N$ is simple, this implies that all of~$N$
is trivial on the subspace. However, an irreducible representation has
no trivial subrepresentations. This contradiction completes the proof
of the claim. 

Let $N_s = \Sp_{2\ell}(\CC)$ be the (simply connected) split group of
type $C_\ell$, $Z(N_s)$ be its center, and $\overline{N_s} =
N_s/Z(N_s)$. By general theory \cite[\S2.2]{PlatonovRapinchuk}, the
$F$-forms of $N_s$ are in one-to-one correspondence with the Galois
cohomology set $H^1(F;\overline{N_s})$. Since $N$ is not split
(indeed, it is anisotropic), its representative in
$H^1(F;\overline{N_s})$ is nontrivial. There is a well-known long
exact sequence 
	$$H^1(F;N_s) \to H^1(F;\overline{N_s}) \to H^2\bigl(F;Z(N_s)\bigr) . $$
Since $N_s = \Sp_{2\ell}(\CC)$, we have $H^1(F;N_s) = 0$
\cite[Prop.~2.7, p.~71]{PlatonovRapinchuk}. Therefore, the cohomology
class $\tau \in H^2\bigl(F;Z(N_s)\bigr)$ corresponding to~$N$ is
nontrivial. 

Since the Dynkin diagram $C_\ell$ has no nontrivial automorphisms, we
know that every finite-dimensional representation of~$N$ is self-dual,
so there is a nondegenerate, $N$-invariant bilinear form on
$\CC^n$. Since the form is invariant under (a conjugate of) the
top-left $\SL_2(\CC)$, it is easy to see that the bilinear form cannot
be symmetric. So it must be skew-symmetric. Therefore, if we let
$\lambda$ be the highest weight of the representation, then $\lambda$
must be nontrivial on the center $Z(N)$ \cite[Lem.~79,
p.~226]{Steinberg}. Hence, $\lambda$ provides an isomorphism to
$\{\pm1\}$, since $|Z(N)| = 2$. (Recall that $N$ is simple of type
$C_\ell$.) So, after identifying $Z(N_s)$ with $Z(N)$, composition
with $\lambda$ is an isomorphism $H^2\bigl(F;Z(N_s)\bigr) \to
H^2\bigl(F;\{\pm1\}\bigr)$. 
Therefore, the composition $\lambda \circ \tau$ is a nontrivial class
in $H^2\bigl(F;\{\pm1\}\bigr)$. This means that the irreducible
representation with highest weight $\lambda$ does not have an $F$-form
\cite[Cor.~3.5 and \S4.2]{Tits}. This contradicts the fact that
$\lambda$ is the highest weight of~$\rho$, which is defined over~$F$. 
\end{proof}

}

\end{document}